\documentclass[a4paper, 12pt,oneside,reqno]{amsart}
\usepackage[a4paper]{geometry}
\geometry{hmargin=1.8cm,top=2cm,bottom=2.5cm}

\usepackage[matrix,arrow,curve,cmtip]{xy}

\usepackage{txfonts}
\DeclareMathAlphabet{\mathcal}{OMS}{cmsy}{m}{n} 

\usepackage{
  amsmath,  
  amssymb,  
  amsthm,   
  tikz,     
  fullpage, 
  youngtab, 
  ytableau, 
  thmtools, 
  hyperref, 
  cite,     
  url       
}

\usepackage{amscd}
\usepackage{euscript}
\usepackage{latexsym}
\usepackage[cp1251]{inputenc}
\usepackage[english]{babel}
\usepackage{mathrsfs}
\usepackage{graphicx}
\usepackage{keyval}
\usepackage{mathtools}
\usepackage{tikz}
\usetikzlibrary{arrows,shapes,snakes,automata,backgrounds,petri,through,positioning}
\usetikzlibrary{intersections}
\usepackage[symbol*]{footmisc}
\usepackage{verbatim}

\usepackage{etoolbox}
\patchcmd{\thebibliography}{\section*}{\paragraph}{}{}



\newtheorem{theorem}{Theorem}[section]
\newtheorem{lemma}[theorem]{Lemma}
\newtheorem{proposition}[theorem]{Proposition}
\newtheorem{corollary}[theorem]{Corollary}

\theoremstyle{definition}
\newtheorem{definition}[theorem]{Definition}
\newtheorem{example}[theorem]{Example}
\newtheorem{remark}[theorem]{Remark}

\newtheorem{construction}[theorem]{Construction}

\newcommand{\m}{\mathfrak}

\title{GARSIDE THEORY: A COMPOSITION--DIAMOND LEMMA POINT OF VIEW}

\author{Viktor Lopatkin}
\address{National Research University Higher School of Economics, Faculty of Computer Science,
Pokrovsky Boulevard 11, Moscow, 109028 Russia}
\email{wickktor@gmail.com}

\begin{document}

\maketitle

\begin{abstract} 
This paper shows how to obtain the key concepts and notations of Garside theory by using machinery of a Composition--Diamond lemma. We also show in some cases the greedy normal form is exactly a Gr\"obner--Shirshov normal form and a family of a left-cancellative category is to be a Garside family if and only if a suitable set of reductions is to be confluent up to some congruence on words.
\end{abstract}

\section*{Introduction}
Throughout this note, all rings are nonzero, associative with identity, all categories are small (objects and morphisms are sets), unless otherwise stated.

The Braid Theory has been formalized in the classical E. Artin's work \cite{Artin}. Later, in Garside's works it has been showed that the braid groups has a positive representation. This concept has been considered by S.I. Adjan \cite{Ad84} and W. Thurston\cite{Thur} (independently), as a result we have the Adjan--Thurston representation of the the braid group.

W. Thurston \cite{Thur} constructed a finite state automaton (see also \cite[Chapter 9]{EpThur}), having as the set of states the positive non-repeating braids, \textit{i.e.,} any two of its strands cross at most once. The concept of non-repeating braids is very useful, because the total (algebraic) number of crossings of two given strands in a braid is clearly an invariant of isotopy. Since a positive braid has only positive crossing, the absolute number of crossings of two strands in a positive braid is an invariant of isotopy. This idea is very useful, because we can forget about isotopic equivalence and, moreover, there is a bijection between the set of non-repeating braids and permutations.

An interesting characteristic of Thurston's automaton is that, after a word is imputed, the state is the maximal head of the word that lies in the set of non-repeating braids. This automaton allowed proving that the braid group is automatic. Moreover, this automaton rewrites any word into a canonical form which is called the (left or right) greedy normal form.

Roughly speaking, the describing of rewriting procedure of braids can be described as follows. We have a binary operation. This operation is resulted from the construction of this automaton, that is, the Thurston automaton works in the following way. Suppose we have two non-repeating braids $a$ and $b$, and we want to rewrite the word $ab$. The braid $a$ looks for a new crossing of the braid $b$, and if the braid $b$ allows to take this crossing (\textit{i.e.,} if there is a presentation $b = b'b''$ such that $b'$ is a braid which exactly contains the needed crossing for the braid $a$: $ab'$ is still non-repeating braid), then the braid $a$ takes this crossing. So, the operation ``add the needed crossing'' from the braid $b$ to the braid $a$ can be interpreted as a ``head'' function of the braid $ab$, denoted by $H(ab)$, \textit{i.e.,} we can say that, the braid $a$ is hungry and greedy for new crossing every time. Then the rewriting procedure of a word $ab$ can be written as follows $ab = H(ab)T(ab)$, where a function $T$ is also called the tail function \cite[p. 294]{Dehbook}.  

The natural generalization of these ideas to some other monoids and categories is called now Garside Theory which has been developed by P. Dehornoy, F. Digne, E. Godele, D. Krammer, and J. Michel \cite{Dehbook}. This theory deals with left-cancellative categories (as a special case, one object, this includes the case of monoids). We refer the reader to book \cite{Dehbook} for complete historical background and some details.

If we look at the standard presentation (=the Coxeter presentation) of the symmetric group $\mathfrak{S}_n$, which is generated by $s_1,\ldots,s_{n-1}$ with relations
\begin{enumerate}
    \item $s_i^2 = 1,$ for all $1\le i \le n-1$,
    \item $s_is_j = s_js_i$ for all $1\le i,j \le n-1$ such that $|i-j|>1$, 
    \item $s_{i+1}s_{i}s_{i+1} = s_is_{i+1}s_i$ for all $1\le i \le n-2$,
\end{enumerate}
and at the Artin's presentation of the braid group $B_n$, which is generated by $\sigma_1,\ldots, \sigma_{n-1}$ with relations
\begin{enumerate}
    \item $\sigma_i\sigma_j = \sigma_j\sigma_i$ for all $1\le i,j \le n-1$ such that $|i-j|>1$, 
    \item $\sigma_{i+1}\sigma_{i}\sigma_{i+1} = \sigma_i\sigma_{i+1}\sigma_i$ for all $1\le i \le n-2$,
\end{enumerate}
we can see a similarity between these presentations. One obvious invariant of an isotopy of a braid is the permutation it induces on the order of the strands: given a braid $b$, the strands define a map $P(b)$ from the top set of endpoints to the bottom set of enpoints, which we interpret as a permutation of $\{1,\ldots,n\}$. In this way we get a homomorphism (epimorphism) $P_n: B_n \to \mathfrak{S}_n$, $\sigma_i \mapsto s_i$ for $1\le i \le n-1$. The inverse map (=a section) can by constructed by the Garside's result, from which follows that the braid group can be presented as a monoid which is generated by a set of divisors (left and right) of the braid $\Delta_n$ (=the Garside braid which can be described physically by rotating the $n$ strands together $180^\circ$ clockwise) and by the element $\Delta_n^{-1}$. For any permutation $\pi \in \mathfrak{S}_n$, we correspond the set $R_\pi:=\left\{(i,j):i<j \,\&\, \pi(i) > \pi(j)\right\}$, thus we obtain Adjan--Thurston's generators (or divisors of $\Delta_n$).

However, this idea can be done via the following interesting way which has been suggested by L.A. Bokut' (see \cite[3.1.4]{BokSurv}). Using the Gr\"obner--Shirshov normal form $\mathrm{NF}$ for the elements of $\mathfrak{S}_n$ (see \cite{BS01,BBCM}), we consider a monoid $B_n'$ generated by the following set of generators $\cup_{\pi \in \mathfrak{S}_n}\{ E_{\mathrm{NF}(\pi)}\}$ with relations $E_{\mathrm{NF}(\pi)}E_{\mathrm{NF}(\tau)} = E_{\mathrm{NF}(\pi\tau)}$ whenever $\ell(\mathrm{NF}(\pi \tau)) = \ell(\mathrm{NF}(\pi)) + \ell(\mathrm{NF}(\tau))$, here $\ell(u)$ means the length of a word $u.$ It is easy to show that the monoid $B_n'$ is isomorphic to the positive braid monoid $B_n^+$. Next, as it was shown in \cite[3.1.4]{BokSurv} the elements $E_{\mathrm{NF}(\pi)}$ are exactly the Adjan--Thurston generators. Further it was also shown that a set of polynomials $S = \{ab - H(ab)T(ab)\}$, where $a,b$ run all over the simple braids, is a Gr\"obner--Shirshov basis relative to some monomial order, and then, by the Composition--Diamond lemma, the corresponding Gr\"obner--Shirshov normal form is exactly the greedy normal form. 

In \cite[VI]{Dehbook} the concept of a Garside germ was introduced. Further in the book this concept was applied to Garside families etc. In particular, using this concept, in \cite[A, VI, Example 2.72]{Dehbook} it was shown how to obtain the braid monoid (and greedy normal form) via the symmetric group. This approach is very 
similar to Bokut's approach. In particular the elements of $\pi,\tau \in \mathfrak {S}_n$ are called {\em tight}\index{Group!braid!tight elements} if $\ell(\pi \tau) = \ell(\pi) + \ell(\tau)$ (in a sense described there).

In this paper, using the concept of germ in the sense of \cite[A, VI]{Dehbook}, we generalize L.A. Bokut' ideas and W. Thurston approach for braid groups to consider Garsdie theory via a Composition--Diamond lemma. We consider two categories, say, $\mathscr{C}$ and $\mathscr{A}$ with a surjective functor $P:\mathscr{C} \to \mathscr{A}$. Assume furhter that $\mathscr{A}$ can be presented as follows $\mathscr{A} = \mathsf{Cat}\langle \Gamma \, | \, R\rangle$ (=a presentation of the $\mathscr{A}$). By the Composition--Diamond lemma for categories (see \cite{Be, BCL12}) we can calculate a normal form $\mathrm{NF}$ (=a Gr\"onber--Shirshov normal form) and hence a $\mathbb{K}$-basis $\mathfrak{B}$ for the algebra $\mathbb{K}\mathscr{A}$ (= a category algebra, see Definition \ref{category_algebra}). We then consider a section $E:\mathscr{A} \to \mathscr{C}$ for the functor $P$ and we wonder know whether $E(a) E(b) = E(\mathrm{NF}(ab))$ for $a,b \in \mathfrak{B}$. This leads us to the concept of a germ (see Construction \ref{the_main_construction}) in the sense of \cite[A, VI]{Dehbook} and we take an interest on whether this germ, denoted by $\Upsilon_E(\mathscr{C},\mathscr{A},P,\mathfrak{B})$, is a Garside germ. The main result of this paper is Theorem \ref{The_Main_Result} answers whether such germs are Garside germs in terms of the confluence (up to a congruence) of the corresponding reduction system $\overline{S}_\Upsilon(\mathfrak{B})$ (see Construction \ref{reduction_system}).

For these reasons we need a new version (see Theorem \ref{CD-lemma}) of the classical Composition--Diamond lemma for categories (or for monoids). We consider a rewriting system $S$ on a set of words (=morphisms of the corresponding free category) with a congruence which satisfies some suitable conditions ($S$-admissibility, see Definition \ref{S-admissible}) and then we require that this system of reductions is to be confluent up to this congruence. This allows us to deal with the fact that an element of a left-cancellative category with a Gariside family $S$, in general, may have more than one $S$-normal decompositions. On the other hand, according to \cite[A, III, Proposition 15]{Dehbook} (see Proposition \ref{deform_of_S-greedy} in this paper), any two $S$-normal decomposition of an element are deformations by invertible elements of one another. This is why we need a ``deformated'' version of a Composition--Diamond lemma.

In particular (see \ref{C=A}), when $\mathscr{C}=\mathscr{A} = \mathsf{Cat}\langle \Gamma \, | \, R \rangle$, and $P$ is assumed to be an identity functor $\mathbf{id}_\mathscr{C}$, a criteria for a germ $\Upsilon(\mathscr{A},\mathfrak{B}): = \Upsilon_E(\mathscr{C},\mathscr{C},\mathbf{id}_\mathscr{C},\mathfrak{B})$ to be a Garside germ is equivalent to a choosing a suitable subset of a set $\mathfrak{B}$ of basic elements of $\mathscr{C}$ (= irreducible elements). In other words to calculate a Garside family of a left-cancellative category $\mathscr{C}$ we first of all have to calculate a Gr\"obner--Shirshov basis for the ideal $I(R)$, this gives a basis $\mathrm{Irr}(R)$, and then we chose a subset of of this basis to make a Garside family. This can be done by considering the corresponding system of reductions (see Construction \ref{reduction_system}) and then we have to check that all ambiguity are resolvable. In the case when a category $\mathscr{C}$ has nontrivial invertible elements we have to require that all such ambiguities are to be resolvable up to congruence $\thickapprox$, where $\thickapprox$ is a deformation of elements of $\mathscr{C}$ by invertible elements (see Definition \ref{def_of_deformation}). We demonstrate how this works for the Klein bottle monoid (Example \ref{the_Klein_bottle_monoid}).

\section{Basic Notations of the Garside Theory}
In this section we recall the basic concept of the Garside theory. We refer to \cite{Dehbook} (see also the survey \cite{Dehsurv}) for details.

We deal with categories and correspondence terminology, let us remind some basic definitions and concepts.

\begin{definition}
A {\it precategory} $\mathscr{P}$ is a pair $(\mathrm{Ob}(\mathscr{P}),\mathrm{Hom}(\mathscr{P}))$ with two maps $\mathfrak{s},\mathfrak{t}:\mathrm{Hom}(\mathscr{P})\to \mathrm{Ob}(\mathscr{P})$ which are called {\it source} and {\it target}, respectively. The elements of $\mathrm{Ob}(\mathscr{P})$ are called the objects, those of $\mathrm{Hom}(\mathscr{P})$ are called the elements or morphisms. 
\end{definition}

By definition, a category is a precategory, plus composition map that obeys certain rules. Since for every object $x$ there is a morphism $1_x:x \to x$, we can consider only morphisms. Identity elements $1_x$ are also called \textit{trivial}, and the collection of all trivial elements in $\mathscr{C}$ is denoted by $\mathbf{1}_\mathscr{C}.$

Assume that $\mathscr{P}$ is a precategory. For $p\ge 1$ and $x,y \in \mathrm{\mathscr{P}}$, an \textit{$\mathscr{P}$-path} of length $p$ with source $x$ and target $y$ is a finite sequence $g_1,\ldots, g_p$ of elements of $\mathscr{P}$ such that the source of $g_1$ is $x$, the target of $g_p$ is $y$, and $g_1\cdots g_p$ is defined. The family of all $\mathscr{P}$-paths of length $p$ is denoted by $\mathscr{P}^{[p]}$.

\begin{definition}[{\cite[II, 7,8]{Mac}}]
Let ${\Gamma} = (V,E)$ be an oriented graph. We say that \textit{a graph $\Gamma = (V,E)$ generates a category} $\mathscr{C}$ if $\mathrm{Ob}(\mathscr{C}) = V$, and any morhpism of $\mathscr{C}$ will be the strings of compasable paths of $\Gamma$, so that a morphism of $\alpha:b \to a$ may be pictured as a path form $b$ to $a$, consisting of successive edges, say $e_1,\ldots, e_n$, of $\Gamma,$ we then say that the $\alpha$ has length $n$, and we write $\ell(\alpha) = n.$ This category $\mathscr{C}$ will be written $\mathsf{Cat}\langle \Gamma \rangle$ and called the \textit{free category} generated by the graph ${\Gamma}$.
\end{definition}

\begin{example}
Let $\Gamma$ have a single vertex, say, $v$, and $n\ge 1$ edges $e_1,\ldots, e_n$. Then $\mathsf{Cat}\langle \Gamma \rangle$ has the following semigroup presentation (= as a presentation of a semigroup)
 \[
  \mathsf{Cat}\langle \Gamma \rangle = \mathsf{Smg}\langle 1_v, e_1,\ldots, e_n\, | \, 1_v^2 =1_v, 1_ve_1 = e_11_v  = e_1,\ldots, 1_ve_n = e_n1_v = e_n \rangle, 
 \]
 it follows that we get the free monoid generated by $e_1,\ldots, e_n$ and where $1_v$ is its identity element (=empty word).
\end{example}

\begin{definition}
 Let $\mathscr{C}$ be a category. A function $\mathcal{R}$ which assigns to each par of objects $a,b$ of $\mathscr{C}$ a binary relation $\mathcal{R}_{a,b}$ on the set $\mathscr{C}(a,b)$ (=all morphism from $a$ to $b$) is called a \textit{congruence} on $\mathsf{Cat}\langle \Gamma  \rangle$ \textit{i.e.,}
 \begin{enumerate}
     \item for each pair $x,y$ of objects, $\mathcal{R}_{a,b}$ is a reflexive, symmetric, and transitive relation on $\mathsf{Cat}\langle \Gamma \rangle (x,y)$.
     \item if $f,f': x \to y$ have $f \mathcal{R}_{x,y} f'$, then for all $g:x'\to x$ and all $h:y \to y'$ one has $(hfg) \mathcal{R}_{x',y'} (hf'g).$
 \end{enumerate}
 
 In case $\mathsf{Cat}\langle \Gamma \rangle$ is the free category generated by a graph $\Gamma$ we shall call $\mathsf{Cat}\langle \Gamma \, | \, \mathcal{R} \rangle:=\mathsf{Cat}\langle \Gamma \rangle/\mathcal{R}$ the category with generators $\Gamma$ and relations $\mathcal{R}.$
 \begin{flushright}
 $\square$
 \end{flushright}
\end{definition}

As a special case (one object), this includes the case of a monoid given by generators and relations.

Throughout this note we frequently use the following notations; $\mathsf{Cat}\langle \Gamma \, | \, \mathcal{R} \rangle$ (\textit{resp.} $\mathsf{Smg}\langle \Gamma \, | \, \mathcal{R} \rangle$, \textit{resp.} $\mathsf{Gr}\langle \Gamma \, | \, \mathcal{R} \rangle$) means a category (\textit{resp.} semigroup, \textit{resp.} group) generated by a graph (\textit{resp.} a set) $\Gamma$ and relations $\mathcal{R}$.

Let $\mathscr{C}$ be a category. Every subset $\mathscr{S} \subseteq \mathscr{C}$ is called subfamily, \textit{i.e.,} $\mathscr{S}$ is the precategory made of $\mathscr{S}$ together with the restriction of the source and target maps to $\mathscr{S}$.

\begin{definition}{\cite[A, III, Definitions 1.1, 1.17]{Dehbook}}~

A category $\mathscr{C}$ is called {\it left-cancellative} if every relation $fg = fh$ with $f,g,h\in\mathscr{C}$ implies $g=h$.

A subfamily $\mathscr{S}$ of a left-cancellative category $\mathscr{C}$ is to be {\it closed under right-divisor} if every right-divisor of an element $\mathscr{S}$ is an element of $\mathscr{S}$.

A length-two $\mathscr{C}$-path $b\cdot c \in \mathscr{C}^{[2]}$ is called \textit{$\mathscr{S}$-greedy} if each relation $s \preccurlyeq a bc$ with $s$ in $\mathscr{S}$ implies $s \preccurlyeq ab$.

A path $a_1\cdots a_n$ is called \textit{$\mathscr{S}$-greedy} if $a_i\cdot a_{i+1}$ is $\mathscr{S}$-greedy for each $i<n.$
 
The family of all invertible elements in a category $\mathscr{C}$ is denoted by $\mathscr{C}^\times$. Let $\mathscr{S} \subseteq \mathscr{C}$ be a family in $\mathscr{C}$, we set
\[
 \mathscr{S}^\sharp:=\mathscr{S}\mathscr{C}^\times \cup \mathscr{C}^\times,
\]
here $\mathscr{S}\mathscr{C}^\times$ consists of all elements $f\in \mathscr{C}$ of the form $f = sc$ where $s\in \mathscr{S}$ and $c \in \mathscr{C}^\times$.

A path is \textit{$\mathscr{S}$-normal} if it is $\mathscr{S}$-greedy and its entries lie in $\mathscr{S}^\sharp$.

A procedure of finding $S$-normal form is called {\it Garside normalization}.
\end{definition}

\begin{example}Let $W(X)$ be a free monoid generated by nonempty set $X$. Let $S$ be the family of all squarefree words in $W(X)$, \textit{i.e.,} the words that admit no factors of the form $x^2$. It is obviously that every word $w\in W(X)$ admits a longest prefix that is squarefree.
 
 For instance, let $X = \{a,b,c\}$, and take $w = ab^2cabc^2babca$, then its $S$-normalization is
 \[
  w= ab\cdot bcac \cdot cbabca.
 \]
\end{example}

\begin{example}[\textbf{Free Abelian Monoids \cite[A, I.1.1]{Dehbook}}]
Take $n \ge 1$, and consider the free abelian monoid $\mathbb{N}^n$. It is clear that any its element $g$ can be viewed as a map $g: \{1,\ldots, n\} \to \mathbb{N}$. Denote by $g(k)$ the $k$th entry of $g.$

For $g, g' \in \mathbb{N}$, we define $g\cdot g'(k): = g(k) + g'(k)$ for each $k$.

Set $\Delta_n$ by $\Delta_n(k) = 1$ for every $k$, and put $N_n:=\{\eta \in \mathbb{N}^n\, | \, \eta(k) \in \{0,1\} \mbox{ for any $k$}\}$, and, finally, for $f,g\in \mathbb{N}^n$, say that $f \le g$ is true if $f(i) \le g(i)$ holds for every $i \in \{1,\ldots, n\}$.

\begin{proposition}[{\cite[A, I, Proposition 1.1]{Dehbook}}]
Every element of $\mathbb{N}^n$ admits a unique decomposition of the form $\Delta^d_n\eta_1\cdots \eta_p$ with $d \in \mathbb{N}$ and $\eta_1,\ldots, \eta_p \in N_n$ satisfying $\eta_1 \ne \Delta_n$, $\eta_p \ne 1$, and, for every $i<p$, and for every $g
\in \mathbb{N}^n\setminus\{1\}$ we have $g \le \eta_{i+1}$ implies $\eta_i g \not\le \Delta_n.$
\end{proposition}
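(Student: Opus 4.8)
The plan is to translate the abstract greediness condition into a transparent statement about supports and then to reduce both existence and uniqueness to a single counting identity. Throughout I write the monoid operation additively, so that $\Delta_n^d\eta_1\cdots\eta_p$ is the coordinatewise sum, its value at a coordinate $k$ being $d+\sum_{i=1}^p\eta_i(k)$; for $\eta\in N_n$ I abbreviate $\mathrm{supp}(\eta)=\{k:\eta(k)=1\}$.

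First I would show that, for $\eta_i,\eta_{i+1}\in N_n$, the stated condition ``$g\le\eta_{i+1}$ and $g\ne 1$ imply $\eta_i g\not\le\Delta_n$'' is equivalent to the inclusion $\mathrm{supp}(\eta_{i+1})\subseteq\mathrm{supp}(\eta_i)$. Indeed, any $g\le\eta_{i+1}$ automatically lies in $N_n$, and $\eta_i g\not\le\Delta_n$ says precisely that $\eta_i$ and $g$ share a coordinate where both equal $1$. Testing this against the unit vectors $g=e_k$ (the element taking value $1$ at $k$ and $0$ elsewhere) for $k\in\mathrm{supp}(\eta_{i+1})$ forces $k\in\mathrm{supp}(\eta_i)$, giving one inclusion; conversely, nested supports make the required overlap automatic. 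Thus the greedy requirement on $\eta_1\cdots\eta_p$ is exactly the chain $\mathrm{supp}(\eta_1)\supseteq\mathrm{supp}(\eta_2)\supseteq\cdots\supseteq\mathrm{supp}(\eta_p)$.

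With nested supports in hand, for each coordinate $k$ the set $\{i:k\in\mathrm{supp}(\eta_i)\}$ is an initial segment $\{1,\ldots,j_k\}$, so $\sum_i\eta_i(k)=j_k$ and any such decomposition satisfies $f(k)=d+j_k$ with $0\le j_k\le p$. Now I read off the boundary conditions: $\eta_p\ne 1$ means some coordinate lies in every support, i.e.\ $j_k=p$ for some $k$, forcing $\max_k f(k)=d+p$; and $\eta_1\ne\Delta_n$ means some coordinate lies in no support, i.e.\ $j_k=0$ for some $k$, forcing $\min_k f(k)=d$. Hence $d$ and $p$ are determined by $f$, which in turn forces $j_k=f(k)-d$ and therefore $\mathrm{supp}(\eta_i)=\{k:f(k)\ge d+i\}$ for each $i$. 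Since an element of $N_n$ is determined by its support, every $\eta_i$ is determined, which is uniqueness.

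For existence I would run this backwards: set $d=\min_k f(k)$, $p=\max_k f(k)-d$, and let $\eta_i$ be the indicator of $\{k:f(k)\ge d+i\}$ for $1\le i\le p$. These supports are visibly nested, so the path is greedy; the counting identity gives $\Delta_n^d\eta_1\cdots\eta_p=f$; and the coordinates realizing the minimum and the maximum yield $\eta_1\ne\Delta_n$ and $\eta_p\ne 1$. The only genuinely delicate step, and the one I would treat most carefully, is the faithful translation of the greedy condition into support-nesting, together with the degenerate case $p=0$ (when $f=\Delta_n^d$ is a pure power and the list of $\eta_i$ is empty, the two boundary conditions being vacuous); everything after that is bookkeeping.
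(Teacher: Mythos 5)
Your proof is correct. Note, however, that the paper does not actually prove this proposition: it is quoted verbatim from Dehornoy et al.\ (\cite[A, I, Proposition 1.1]{Dehbook}) inside an example, and the paper only illustrates it with the computation $(5,4,3)=\Delta_3^3\cdot\eta_1\cdot\eta_2$. So there is nothing in the text to compare against, and your argument stands as a self-contained replacement. Your key step --- translating the greedy condition ``$g\le\eta_{i+1}$, $g\ne 1$ $\Rightarrow$ $\eta_i g\not\le\Delta_n$'' into the support-nesting $\mathrm{supp}(\eta_1)\supseteq\cdots\supseteq\mathrm{supp}(\eta_p)$ by testing against unit vectors --- is exactly the right reduction, and it is also how the source handles $\mathbb{N}^n$; after that, the identity $f(k)=d+\#\{i: k\in\mathrm{supp}(\eta_i)\}$ pins down $d=\min_k f(k)$, $p=\max_k f(k)-d$, and $\mathrm{supp}(\eta_i)=\{k: f(k)\ge d+i\}$, giving existence and uniqueness simultaneously. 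You were right to flag the degenerate case $p=0$ and the fact that $g\le\eta_{i+1}$ forces $g\in N_n$ (so quantifying $g$ over all of $\mathbb{N}^n\setminus\{1\}$ costs nothing); those are the only places a careless version of this argument could slip. I see no gap.
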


For instance, let $n=3$, and take $f = (5,4,3)$,  then its $N_n$-normalization is
\begin{eqnarray*}
 (5,4,3) &=& (3,3,3) + (2,1,0) \\
 &=& (3,3,3) + (1,1,0) + (1,0,0),
\end{eqnarray*}
thus we obtain $f = \Delta_3^3\cdot \eta_1 \cdot \eta_2$
\end{example}

\begin{definition}[{\cite[A, III, Definition 1.20]{Dehbook}}]\label{def_of_deformation}
 (See Fig.\ref{deformofpaths}) Assume that $\mathscr{C}$ is a left-cancellative category. A $\mathscr{C}$-path $a =a_1\cdots a_n$ is said to be a \textit{deformation by invertible elements} or \textit{$\mathscr{C}^\times$-deformation}, of another $\mathscr{C}$-path $b=b_1\cdots b_m$, we then write $a\thickapprox b$, if there exist $\epsilon_0,\ldots, \epsilon_\ell \in \mathscr{C}^\times$, $\ell = \mathrm{max}(n,m)$, such that $\epsilon_0$ and $\epsilon_\ell$ are identity elements and $\epsilon_{i-1}b_i = a_i\epsilon_i$ holds for $1\le i \le \ell$, where, for $n \ne m$, the shorter path is expanded by identity elements.
\end{definition}

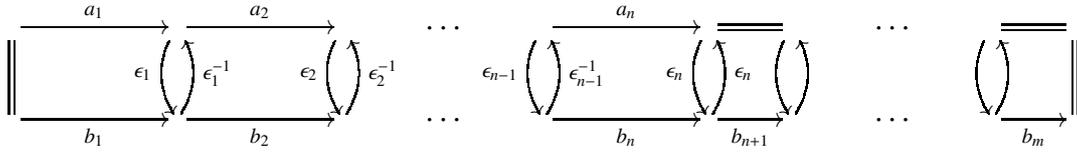
\begin{figure}[h!]
    \centering
     \[
      \xymatrix{
       \ar@{=}[d] \ar@{->}[rr]^{a_1} && \ar@/_/[d]_{\epsilon_1} \ar@{->}[rr]^{a_2} && \ar@/_/[d]_{\epsilon_2} & \cdots & \ar@/_/[d]_{\epsilon_{n-1}} \ar@{->}[rr]^{a_n} && \ar@/_/[d]_{\epsilon_n} \ar@{=}[r] &  \ar@/_/[d] & \cdots & \ar@/_/[d] \ar@{=}[r] & \ar@{=}[d]\\
       \ar@{->}[rr]_{b_1}  && \ar@/_/[u]_{\epsilon_1^{-1}} \ar@{->}[rr]_{b_2}  && \ar@/_/[u]_{\epsilon_2^{-1}} & \cdots & \ar@/_/[u]_{\epsilon_{n-1}^{-1}} \ar@{->}[rr]_{b_n}&& \ar@/_/[u]_{\epsilon_n} \ar@{->}[r]_{b_{n+1}} & \ar@/_/[u] & \cdots & \ar@/_/[u] \ar@{->}[r]_{b_m}&
      }
     \]
    \caption{Deformation by invertible elements: invertible elements connect the corresponding entries; if one path is shorter (here we are in the case $n<m$), it is extended by identity elements.}
    \label{deformofpaths}
\end{figure}

\begin{proposition}[{\cite[A, III, Proposition 1.25]{Dehbook}}]\label{deform_of_S-greedy}
  If $S$ is a subfaily of a left-cancellative category $\mathscr{C}$, any two $S$-normal decomposition of an element of $\mathscr{C}$ are $\mathscr{C}^\times$-deformations of one another.
\end{proposition}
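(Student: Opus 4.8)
The plan is to reduce the whole statement to the behaviour of the first entry (the \emph{$S$-head}) of a normal decomposition, and then to peel this entry off and induct, keeping careful track of the invertible elements relating the two decompositions. Two preliminary facts about $S^\sharp$-left-divisors are needed. Since every invertible element of $\mathscr{C}$ left-divides every element with the same source, the $S$-greedy condition extends automatically from $S$ to $S^\sharp$: writing an element of $S^\sharp$ as $s\gamma$ with $s\in S$, $\gamma\in\mathscr{C}^\times$ (or as an invertible, which divides everything), the relation $s\gamma\preccurlyeq abc$ is equivalent to $s\preccurlyeq abc$, so for an $S$-greedy pair $b\cdot c$ one gets $s'\preccurlyeq abc\Rightarrow s'\preccurlyeq ab$ for all $s'\in S^\sharp$. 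Combining this with the standard fact that greediness of a path passes to every grouping $(a_1\cdots a_k,\,a_{k+1}\cdots a_n)$ — provable by a routine induction from the two-term condition, the universally quantified left factor absorbing the grouping — and setting the left factor equal to a trivial element, one sees that the first entry $a_1$ of any $S$-normal decomposition of $g$ is a $\preccurlyeq$-greatest element of $S^\sharp$ left-dividing $g$: it lies in $S^\sharp$, it divides $g$, and every $S^\sharp$-left-divisor of $g$ divides it.

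Next I would show that such a greatest $S^\sharp$-left-divisor is unique up to right multiplication by an invertible. If $h$ and $h'$ are both greatest, then $h\preccurlyeq h'$ and $h'\preccurlyeq h$; writing $h'=hu$ and $h=h'v$ yields $h=h(vu)$ and $h'=h'(uv)$, and left-cancellativity forces $vu=1$ and $uv=1$, so $u\in\mathscr{C}^\times$ and $h'=hu$. Applied to the two first entries of $S$-normal decompositions $a_1\cdots a_n=b_1\cdots b_m$ of $g$, this produces an invertible $\epsilon_1$ with $b_1=a_1\epsilon_1$; left-cancelling $a_1$ from $a_1\cdots a_n=a_1\epsilon_1 b_2\cdots b_m$ then gives $a_2\cdots a_n=\epsilon_1\,b_2\cdots b_m$. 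This is the length-two core of the statement, and it uses nothing beyond left-cancellativity.

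The proof is then an induction on $n+m$, peeling the head and propagating $\epsilon_1$. Here I would use that $S$-greediness is stable under left multiplication by an invertible — again because the greedy condition quantifies over a free left factor, the pair $(\epsilon_1 b_2,\,b_3)$ is greedy whenever $(b_2,\,b_3)$ is — so the residual path $\epsilon_1 b_2\mid b_3\mid\cdots\mid b_m$ is still $S$-greedy and has the same product $a_2\cdots a_n$ as the $S$-normal path $a_2\mid\cdots\mid a_n$. The connecting invertibles $\epsilon_1,\epsilon_2,\dots$ obtained at successive stages are exactly the data of a $\mathscr{C}^\times$-deformation, and assembling them closes the induction.

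The main obstacle is precisely the bookkeeping in this last step. The residual path $\epsilon_1 b_2\mid b_3\mid\cdots$ need not be $S$-\emph{normal}, since $S^\sharp$ is in general not closed under left multiplication by invertibles, so the induction hypothesis does not apply to it verbatim; worse, left-multiplying by $\epsilon_1$ can change the head \emph{class} of the residual element, an invertible becoming exposed as the new head once the genuine $S$-part is pushed one slot to the right. The way through is not to absorb $\epsilon_1$ into an entry but to carry it as a leading invertible and let the head function re-expose it: at the stage where $\epsilon_1$ can no longer be merged into a genuine $S^\sharp$-entry it surfaces as an invertible head-entry on one side, and this is matched on the other side by a trivial (identity) entry. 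This is exactly why the definition of $\mathscr{C}^\times$-deformation pads the shorter path with identity elements, and verifying that the invertibles propagate consistently in this way — equivalently, that the two $S$-normal decompositions differ only by sliding invertibles across the boundaries between consecutive entries, as in the relations $\epsilon_{i-1}b_i=a_i\epsilon_i$ — is where the real content lies.
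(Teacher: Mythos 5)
First, a point of reference: the paper does not actually prove this statement. Proposition \ref{deform_of_S-greedy} is imported verbatim from \cite[A, III, Proposition 1.25]{Dehbook}, so there is no in-paper argument to compare against; your attempt has to be judged on its own. Your preliminaries are all correct and are the standard route: the extension of $S$-greediness from $S$ to $S^\sharp$ (an invertible right factor can always be absorbed into the witness of $\preccurlyeq$), the grouping lemma, the identification of the first entry of an $S$-normal path as a $\preccurlyeq$-greatest $S^\sharp$-left-divisor of the product, the uniqueness of such a greatest divisor up to a right invertible factor via left-cancellation, and the resulting relations $b_1=a_1\epsilon_1$ and $a_2\cdots a_n=\epsilon_1 b_2\cdots b_m$.

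The gap is that the inductive step is announced but not carried out: your resolution of the obstacle you correctly identify ("carry $\epsilon_1$ as a leading invertible and let the head function re-expose it") describes the desired outcome rather than giving an argument, and you yourself defer the propagation of the $\epsilon_i$ as "where the real content lies." Concretely, what is missing is the following. Strengthen the statement to: if $u_1\cdots u_n$ and $v_1\cdots v_m$ are $S$-normal with $u_1\cdots u_n=\epsilon\,v_1\cdots v_m$ and $\epsilon\in\mathscr{C}^\times$, then there are invertibles with $\epsilon_0=\epsilon$ and $\epsilon_{i-1}v_i=u_i\epsilon_i$. The step requires $\epsilon v_1=u_1\epsilon_1$ with $\epsilon_1$ invertible, i.e.\ \emph{both} $u_1\preccurlyeq\epsilon v_1$ and $\epsilon v_1\preccurlyeq u_1$. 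The first does follow from your tools: $(\epsilon v_1)\cdot(v_2\cdots v_m)$ is $S$-greedy (you noted greediness survives left multiplication of the first entry by an invertible), so it absorbs the $S^\sharp$-left-divisor $u_1$ of the common product $h$. The second is exactly the direction the greatest-divisor property of $u_1$ cannot give, because $\epsilon v_1$ need not lie in $S^\sharp$; it must be obtained symmetrically, by transporting to $h'=\epsilon^{-1}h=v_1\cdots v_m$ and observing that $(\epsilon^{-1}u_1)\cdot(u_2\cdots u_n)$ is again $S$-greedy, hence absorbs the $S^\sharp$-left-divisor $v_1$ of $h'$, giving $v_1\preccurlyeq\epsilon^{-1}u_1$. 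Left-cancellation then makes the two witnesses mutually inverse exactly as in your length-two core, and cancelling $u_1$ propagates $\epsilon_1$ to the next stage; the padding by identities is the degenerate case where one side is exhausted and the remaining entries multiply to an invertible, hence are each invertible (from $xy=1$ one gets $yx=1$ by left-cancellation). Without this symmetric double-absorption the induction does not close, so as written the proposal stops one essential step short of a proof.
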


\begin{example}
Let us consider a symmetric group $\mathfrak{S}_n$, $n \ge 3$, in Coxeter presentation:
  \[
 \mathfrak{S}_n = \mathsf{Smg}\langle s_1,\ldots, s_{n-1}\ | \, \mathcal{R} \rangle,
\]
where $\mathcal{R}$ is the following set of relations:
\begin{enumerate}
    \item $s_i^2 = 1,$ for all $1\le i \le n-1$,
    \item $s_is_j = s_js_i$ for all $1\le i,j \le n-1$ such that $|i-j|>1$, 
    \item $s_{i+1}s_{i}s_{i+1} = s_is_{i+1}s_i$ for all $1\le i \le n-2$.
\end{enumerate}

We have $s_is_j \thickapprox s_js_i$ for any $1\le i,j\le n-1$, $|i-j|>1$ because the diagram
\[
 \xymatrix{
  \cdot \ar@{->}[r]^{s_i} \ar@{=}[d] & \cdot \ar@{->}[d]_{s_is_j} \ar@{->}[r]^{s_j} & \cdot \ar@{=}[d] \\
  \cdot \ar@{->}[r]_{s_j}  & \cdot \ar@{->}[r]_{s_i} & \cdot
 }
\]
is commutative, and $s_ks_{k+1}s_k \thickapprox s_{k+1}s_ks_{k+1}$ for all $1\le k\le n-1$ because the diagram 
\[
 \xymatrix{
  \cdot \ar@{->}[r]^{s_k} \ar@{=}[d] & \cdot \ar@{->}[r]^{s_{k+1}} \ar@{->}[d]^{s_ks_{k+1}} & \cdot \ar@{->}[r]^{s_k} \ar@{->}[d]^{s_ks_{k+1}} & \cdot \ar@{=}[d] \\
  \cdot \ar@{->}[r]_{s_{k+1}} & \cdot \ar@{->}[r]_{s_{k}} & \cdot \ar@{->}[r]_{s_{k+1}} & \cdot
 }
\]
is commutative. \begin{flushright}
 $\square$
\end{flushright}
\end{example}

\begin{definition}{\cite[A, III, Definition 1.31]{Dehbook}}
 A subfamily $\mathscr{S}$ of a left-cancellative category $\mathscr{C}$ is called a \textit{Garside} family in $\mathscr{C}$ if every element of $\mathscr{C}$ admits at least one $\mathscr{S}$-normal decomposition.
 
 A subfamily $\mathscr{S}$ of a category $\mathscr{C}$ is said to be {\it solid} in $\mathscr{C}$ if $\mathscr{S}$ includes $\mathbf{1}_\mathscr{C}$ and it is closed under right-divisor.
\end{definition}

\begin{definition}{\cite[A, VI, Definitions 1.3, 1.23]{Dehbook}}\label{def_of_germ}\\
  A germ is a triple $\bigl(\Upsilon,\bullet,\mathbf{1}_\Upsilon\bigr)$ where $\Upsilon$ is a precategory, $\mathbf{1}_\Upsilon$ is a subfamily of an elements $x$ with source and target $x$ for each object $x$, and the partial map $\bullet$ that satisfies
  \begin{itemize}
    \item[(1)] if $\alpha \bullet \beta$ is defined, the source of $\alpha \bullet \beta$ is the source of $\alpha$, and its target is the target of $\beta$;
    \item[(2)] the relation $1_x \bullet \alpha = \alpha = \alpha \bullet 1_y$ hold for each $\alpha$ in $\Upsilon(x,y)$;
    \item[(3)] if $\alpha \bullet \beta$ and $\beta\bullet \gamma$ are defined, then $(\alpha \bullet \beta)\bullet \gamma$ is defined if and only if $\alpha\bullet (\beta\bullet \gamma)$ is, in which case they are equal.
    \end{itemize}
  The germ is said to be left-associative (\textit{resp.} right-associative) if
  \begin{itemize}
    \item[(4)] $(\alpha\bullet \beta)\bullet \gamma$ is defined, then so is $\beta\bullet \gamma$;
    \item[(5)] (\textit{resp.} $\alpha \bullet (\beta\bullet \gamma)$ is defined, then so is $\alpha \bullet \beta$).
  \end{itemize}

If $(\Upsilon,\bullet,\mathbf{1}_\Upsilon)$ is a germ, we denote by $\mathsf{Cat}\langle \Upsilon,\bullet,\mathbf{1}_\Upsilon\rangle$ (or shortly $\mathsf{Cat}\langle\Upsilon\rangle$) the category $\mathsf{Cat}\langle\Upsilon\,|\,\mathcal{R}_\bullet \rangle$, where $\mathcal{R}_\bullet$ is the family of all relations $\alpha\bullet \beta = \alpha\beta$ with $\alpha,\beta\in \Upsilon$ and $\alpha\bullet \beta$ is defined.
  
A germ $\Upsilon$ is called \textit{a Garside germ} if it is a Garside family for the category $\mathsf{Cat}\langle\Upsilon\rangle$.
\end{definition}

\begin{example}
Let us consider the positive braid monoid in three strands \[
B_3^+ = \mathsf{Smg} \langle \sigma_1,\sigma_2,\, | \, \sigma_1\sigma_2\sigma_1 = \sigma_2\sigma_1\sigma_2 \rangle,
\]
and let $\Upsilon$ be the following family in $B_3^+$, $\Upsilon = \{1,\sigma_1,\sigma_2,\sigma_1\sigma_2,\sigma_2\sigma_1, \Delta_3\},$ where $\Delta_3:=\sigma_1\sigma_2\sigma_1 =\sigma_2\sigma_1\sigma_2.$

Define the partial map $\bullet:\Upsilon^{[2]} \to \Upsilon$ as follows

\begin{center}
 \begin{tabular}{c||c|c|c|c|c|c}
 $\bullet$ & $1$ & $\sigma_1$ & $\sigma_2$ & $\sigma_1\sigma_2$ & $\sigma_2\sigma_1$ & $\Delta_3$\\
 \hline
 \hline
 $1$ & $1$ & $\sigma_1$ & $\sigma_2$ & $\sigma_1\sigma_2$ & $\sigma_2\sigma_1$ & $\Delta_3$\\ 
 \hline
 $\sigma_1$ & $\sigma_1$ &  & $\sigma_1\sigma_2$ &  & $\Delta_3$ & \\ 
 \hline
 $\sigma_2$ & $\sigma_2$ & $\sigma_2\sigma_1$ &  & $\Delta_3$ &  & \\ 
 \hline
 $\sigma_1\sigma_2$ & $\sigma_1\sigma_2$ & $\Delta_3$ &  &  &  & \\ 
 \hline
 $\sigma_2\sigma_1$ & $\sigma_2\sigma_1$ &  & $\Delta_3$ &  &  & \\ 
 \hline
 $\Delta_3$ & $\Delta_3$ &  &  &  &  & \\ 
 \hline
 \end{tabular}
\end{center}

By the straightforward verification, it is easy to see that $(\Upsilon,\bullet)$ is a left-cancellative and left-associative germ.
\end{example}

\begin{definition}[{\cite[A, VI, Definitions 2.1, 2.27]{Dehbook}}]\label{def_of_I&J}
  For $\Upsilon$ a germ and $\alpha,\beta \in\Upsilon$, we put
  \begin{align*}
    &\mathcal{I}_\Upsilon(\alpha,\beta):=\bigl\{\delta \in \Upsilon\, |\, \exists \gamma \in \Upsilon \, (\delta = \alpha \bullet \gamma \mbox{ and } \gamma \preccurlyeq_\Upsilon \beta)\bigr\},\\
    &\mathcal{J}_\Upsilon(\alpha,\beta): = \bigl\{\gamma \in \Upsilon\, |\, \beta \bullet \gamma \in \Upsilon \, \&\, \gamma \preccurlyeq_\Upsilon \beta\bigr\}.
  \end{align*}
  
  A map $\mathfrak{I}: \Upsilon^{[2]}\to \Upsilon$ is called an \textit{$\mathcal{I}$-function} (\textit{resp.} a \textit{$\mathcal{J}$-function}) if, for every $\alpha \cdot \beta \in \Upsilon^{[2]}$, the value at $\alpha \cdot \beta$ lies in $\mathcal{I}_\Upsilon(\alpha,\beta)$ (\textit{resp.} in $\mathcal{J}_\Upsilon(\alpha, \beta)$.)

  An $\mathcal{I}$-function $\mathfrak{I}:\Upsilon^{[2]} \to \Upsilon$ is called greatest $\mathcal{I}$-function if and only if for every $\alpha \cdot \beta \in \Upsilon^{[2]}$, we have $\mathfrak{I}(\alpha,\beta) = \alpha \bullet \gamma$ for some $\gamma \in \Upsilon$ satisfying $\gamma \preccurlyeq_\Upsilon \beta$, and $\delta \preccurlyeq_\Upsilon \mathfrak{I}(\alpha,\beta)$ holds for every $\delta \in \mathcal{I}_\Upsilon(\alpha,\beta)$.
\end{definition}

\begin{proposition}[{\cite[A, VI, Proposition 2.28]{Dehbook}}]\label{prop VI,2.28}
  A germ $\Upsilon$ is a Garside germ if and only if, $\Upsilon$ is left-associative, left-cancellative, and admits a greatest $\mathcal{I}$-function.
\end{proposition}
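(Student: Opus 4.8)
The plan is to work throughout in the category $\mathscr{C} := \mathsf{Cat}\langle\Upsilon\rangle$, whose defining relations (Definition \ref{def_of_germ}) are $\alpha\bullet\beta = \alpha\beta$ whenever $\alpha\bullet\beta$ is defined; in particular left-divisibility $\preccurlyeq$ in $\mathscr{C}$ restricts on $\Upsilon$ to the germ relation $\preccurlyeq_\Upsilon$. By Definition \ref{def_of_germ}, being a Garside germ means exactly that $\Upsilon$ is a Garside family in $\mathscr{C}$, i.e., by the definition of a Garside family, that every element of $\mathscr{C}$ admits an $\Upsilon$-normal decomposition. Since $\Upsilon$-normality is the conjunction of the local conditions ``$\alpha_i\cdot\alpha_{i+1}$ is $\Upsilon$-greedy'', I would reduce the whole statement to one local computation: given a length-two path $\alpha\cdot\beta$ with entries in $\Upsilon^\sharp$, extract the maximal left $\Upsilon^\sharp$-divisor (the head) of the product $\alpha\beta$. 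The bridge to the germ is that, because $\alpha$ is itself a left $\Upsilon^\sharp$-divisor of $\alpha\beta$, any head must dominate $\alpha$ and therefore has the form $\alpha\bullet\gamma$ with $\gamma\preccurlyeq_\Upsilon\beta$, that is, an element of $\mathcal{I}_\Upsilon(\alpha,\beta)$. Hence a greatest head exists precisely when $\mathcal{I}_\Upsilon(\alpha,\beta)$ has a $\preccurlyeq_\Upsilon$-greatest element, which is exactly the content of the greatest $\mathcal{I}$-function.

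For the direction ($\Leftarrow$), I would first turn $\mathfrak{I}$ into a local normalisation map: writing $\mathfrak{I}(\alpha,\beta) = \alpha\bullet\gamma$ with $\gamma\preccurlyeq_\Upsilon\beta$ maximal and $\beta = \gamma\gamma'$, send $\alpha\cdot\beta \mapsto \mathfrak{I}(\alpha,\beta)\cdot\gamma'$. The first task is to check that this pair is $\Upsilon$-greedy and still represents $\alpha\beta$ in $\mathscr{C}$; greediness is exactly where maximality of $\mathfrak{I}$ enters, while left-cancellativity guarantees that $\gamma'$ is well defined. The second task is to iterate this move along an arbitrary path $\alpha_1\cdots\alpha_n$ and prove that the process terminates in a genuinely $\Upsilon$-normal path. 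This is the sweeping (``domino'') argument: normalising $\alpha_i\cdot\alpha_{i+1}$ rewrites $\alpha_{i+1}$ and so may disturb the greediness of $\alpha_{i+1}\cdot\alpha_{i+2}$, and one must show the disturbance only propagates to the right and eventually stops. Left-associativity (condition (4) of Definition \ref{def_of_germ}) is precisely what makes two successive local moves cohere, yielding the ``second domino rule'' and hence a globally greedy output; a length/termination argument then furnishes the desired $\Upsilon$-normal decomposition, so $\Upsilon$ is a Garside family.

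For the direction ($\Rightarrow$), assume $\Upsilon$ is a Garside germ. Since a Garside family lives in a left-cancellative category, $\mathscr{C}$ is left-cancellative; identifying $\Upsilon$ with its image under the (injective) structural map $\Upsilon\to\mathscr{C}$, the identity $\alpha\bullet\beta = \alpha\beta$ lets one cancel $\alpha$ and deduce that $\alpha\bullet\beta = \alpha\bullet\beta'$ forces $\beta=\beta'$, which is left-cancellativity of the germ. For the greatest $\mathcal{I}$-function I would apply the normal-decomposition hypothesis to the product $\alpha\beta$ and read off its first entry: this is the maximal left $\Upsilon^\sharp$-divisor of $\alpha\beta$, which by the bridge of the first paragraph is the $\preccurlyeq_\Upsilon$-greatest element of $\mathcal{I}_\Upsilon(\alpha,\beta)$, and setting $\mathfrak{I}(\alpha,\beta)$ equal to it gives the greatest $\mathcal{I}$-function. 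Finally, left-associativity follows once one knows that a Garside family may be taken solid, i.e. closed under right-divisor: if $(\alpha\bullet\beta)\bullet\gamma$ is defined then $\alpha\beta\gamma\in\Upsilon$ and $\beta\gamma$ is a right-divisor of it, whence $\beta\gamma\in\Upsilon$ and $\beta\bullet\gamma$ is defined.

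The hard part, in both directions, is the passage between the germ operation $\bullet$ and divisibility $\preccurlyeq$ in $\mathscr{C}$, together with the propagation argument in ($\Leftarrow$). Concretely, I expect the main obstacle to be proving that a local head-extraction, iterated along a word, converges to a path that is greedy at \emph{every} position simultaneously: this is a genuinely non-local statement whose proof rests entirely on left-associativity (to guarantee that re-normalising a pair never spoils the greediness already achieved to its left) and on left-cancellativity (to keep the successive remainders well defined). The identification of the $\Upsilon^\sharp$-head with the greatest element of $\mathcal{I}_\Upsilon(\alpha,\beta)$ must also be handled with care when $\mathscr{C}$ has nontrivial invertible elements, since heads are then unique only up to right-multiplication by a unit, matching the $\sharp$-operation built into $\Upsilon^\sharp$.
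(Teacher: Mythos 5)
First, a point of reference: the paper does not prove Proposition \ref{prop VI,2.28} at all --- it is imported verbatim from \cite[A, VI, Proposition 2.28]{Dehbook}, so there is no internal proof to compare against. In the source, the result is obtained by routing through the sharp $\mathcal{I}$-law criterion (quoted here as Theorem \ref{RecGar_germ}) and showing that, for a left-associative left-cancellative germ, a greatest $\mathcal{I}$-function automatically obeys the sharp $\mathcal{I}$-law and conversely. Your plan instead argues directly from the definition of a Garside family via head extraction and a domino-style propagation; that is a legitimate alternative route and your identification of the greatest element of $\mathcal{I}_\Upsilon(\alpha,\beta)$ with the $\Upsilon^\sharp$-head of $\alpha\beta$ is the right bridge. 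As a plan, the $(\Leftarrow)$ direction is sound, and you correctly locate the hard technical core: translating between the partial product $\bullet$ of the abstract germ and divisibility in $\mathsf{Cat}\langle\Upsilon\rangle$. But that translation is not a detail --- it is most of the proof, and two of your steps as written do not go through.

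The concrete gap is in the $(\Rightarrow)$ direction, for left-associativity. You invoke ``a Garside family may be taken solid,'' but $\Upsilon$ is the \emph{given} germ: you are not free to enlarge or replace it, and Garside families are not solid in general. What is actually available is that $\Upsilon^\sharp$ is closed under right-divisor \emph{inside the category} $\mathsf{Cat}\langle\Upsilon\rangle$; from $(\alpha\bullet\beta)\bullet\gamma$ defined you can conclude that the category element $\beta\gamma$ lies in $\Upsilon^\sharp$, but for an abstract germ this does not yet say that the partial product $\beta\bullet\gamma$ is \emph{defined in the germ}, which is what left-associativity asserts. Closing that gap requires exactly the comparison lemmas between $\bullet$ and the induced product that you elide. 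The same issue infects your ``bridge'': writing the head as $\alpha\bullet\gamma$ with $\gamma\preccurlyeq_\Upsilon\beta$ presumes that a category-level factorization $\delta=\alpha\gamma$ with $\gamma$ a right-divisor of an element of $\Upsilon$ descends to a germ-level product and to germ-level divisibility, and that the structural map $\Upsilon\to\mathsf{Cat}\langle\Upsilon\rangle$ is injective; none of this is free. So the proposal is a correct outline of a known proof strategy, but the steps it treats as immediate (germ left-associativity and the germ/category divisibility dictionary) are precisely where the substance of \cite[A, VI, \S 2]{Dehbook} lies, and one of them is argued by an appeal (solidity) that is simply not valid for the germ at hand.
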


\begin{definition}[\textbf{$\mathcal{I}$-law}]{\cite[A, VI, Definition 2.5]{Dehbook}}
If $\Upsilon$ is a germ and $\mathfrak{I}$ is a map from $\Upsilon^{[2]}$ to $\Upsilon$, we say that $\mathfrak{I}$ obeys $\mathcal{I}$-law if, for every $\alpha,\beta,\gamma \in \Upsilon$ with $\alpha \bullet \beta$ defined, we have
  \begin{equation}\label{I-law}
      \mathfrak{I}(\alpha, \mathfrak{I}(\beta,\gamma)) =^\times \mathfrak{I}(\alpha \bullet \beta, \gamma).
  \end{equation}
  If the counterpart of this equality with $=$ replacing $=^\times$, we say that $I$ obeys the sharp  $\mathcal{I}$-law.
\end{definition}

\begin{theorem}[{\cite[A, VI, Proposition 2.8]{Dehbook}}]\label{RecGar_germ}
  A germ $\Upsilon$ is a Garside germ if and only if it is left-associative, left-cancellative, and admits and $\mathcal{I}$-function obeying the sharp $\mathcal{I}$-law.
\end{theorem}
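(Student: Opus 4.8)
The plan is to deduce the statement from Proposition \ref{prop VI,2.28}, which already characterizes Garside germs, among left-associative and left-cancellative germs, as exactly those admitting a \emph{greatest} $\mathcal{I}$-function. Thus it suffices to prove, for a left-associative and left-cancellative germ $\Upsilon$, the equivalence of the two conditions: (i) $\Upsilon$ admits a greatest $\mathcal{I}$-function, and (ii) $\Upsilon$ admits an $\mathcal{I}$-function obeying the sharp $\mathcal{I}$-law \eqref{I-law}. The theorem then follows by substituting (ii) for (i) in Proposition \ref{prop VI,2.28}.

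For (i) $\Rightarrow$ (ii), let $\mathfrak{I}$ be a greatest $\mathcal{I}$-function, fix $\alpha,\beta,\gamma$ with $\alpha\bullet\beta$ defined, and abbreviate $L:=\mathfrak{I}(\alpha,\mathfrak{I}(\beta,\gamma))$ and $R:=\mathfrak{I}(\alpha\bullet\beta,\gamma)$. I would establish $L=R$ by proving $R\preccurlyeq_\Upsilon L$ and $L\preccurlyeq_\Upsilon R$. Writing $R=(\alpha\bullet\beta)\bullet\varepsilon$ with $\varepsilon\preccurlyeq_\Upsilon\gamma$, left-associativity makes $\beta\bullet\varepsilon$ defined and germ-associativity gives $R=\alpha\bullet(\beta\bullet\varepsilon)$; since $\beta\bullet\varepsilon\in\mathcal{I}_\Upsilon(\beta,\gamma)$ we get $\beta\bullet\varepsilon\preccurlyeq_\Upsilon\mathfrak{I}(\beta,\gamma)$, whence $R\in\mathcal{I}_\Upsilon(\alpha,\mathfrak{I}(\beta,\gamma))$ and $R\preccurlyeq_\Upsilon L$. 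For the reverse, write $L=\alpha\bullet\zeta$ with $\zeta\preccurlyeq_\Upsilon\mathfrak{I}(\beta,\gamma)$; from $\beta\preccurlyeq_\Upsilon\mathfrak{I}(\beta,\gamma)$ one gets $\alpha\bullet\beta\preccurlyeq_\Upsilon L=\alpha\bullet\zeta$, and left-cancellativity upgrades this to $\beta\preccurlyeq_\Upsilon\zeta$, say $\zeta=\beta\bullet\xi$; cancelling again in $\beta\bullet\xi=\zeta\preccurlyeq_\Upsilon\mathfrak{I}(\beta,\gamma)=\beta\bullet\delta_0$ (with $\delta_0\preccurlyeq_\Upsilon\gamma$) yields $\xi\preccurlyeq_\Upsilon\delta_0\preccurlyeq_\Upsilon\gamma$, so $L=(\alpha\bullet\beta)\bullet\xi\in\mathcal{I}_\Upsilon(\alpha\bullet\beta,\gamma)$ and $L\preccurlyeq_\Upsilon R$. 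The one delicate point is passing from $=^\times$ to the genuine $=$ of the sharp law; this is exactly where one invokes that a greatest element of $\mathcal{I}_\Upsilon(\alpha,\beta)$ is \emph{unique}, i.e. that $\preccurlyeq_\Upsilon$ is antisymmetric on the relevant elements and not merely up to right multiplication by invertibles.

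For (ii) $\Rightarrow$ (i), I would argue that an $\mathcal{I}$-function $\mathfrak{I}$ obeying the sharp law is automatically a greatest one. Given $\delta=\alpha\bullet\gamma\in\mathcal{I}_\Upsilon(\alpha,\beta)$ with $\beta=\gamma\bullet\beta'$, the law applied to $(\alpha,\gamma,\beta')$ (legitimate since $\alpha\bullet\gamma$ is defined) gives $\mathfrak{I}(\alpha,\mathfrak{I}(\gamma,\beta'))=\mathfrak{I}(\alpha\bullet\gamma,\beta')$, and since the right-hand side has the shape $\delta\bullet(\cdots)$ we read off $\delta\preccurlyeq_\Upsilon\mathfrak{I}(\alpha,\mathfrak{I}(\gamma,\beta'))$; moreover $\mathfrak{I}(\gamma,\beta')=\gamma\bullet\gamma''\preccurlyeq_\Upsilon\gamma\bullet\beta'=\beta$. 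To conclude $\delta\preccurlyeq_\Upsilon\mathfrak{I}(\alpha,\beta)$ it remains to establish the monotonicity $\mu\preccurlyeq_\Upsilon\beta\Rightarrow\mathfrak{I}(\alpha,\mu)\preccurlyeq_\Upsilon\mathfrak{I}(\alpha,\beta)$ in the second slot. \textbf{This monotonicity is the main obstacle}: it cannot be read off the $\mathcal{I}$-function property alone (that would already presuppose maximality), and must be bootstrapped from the sharp law together with left-cancellativity, all while tracking which products $\bullet$ are defined, which is precisely what left-associativity controls. I expect the clean organization is an induction on the $\preccurlyeq_\Upsilon$-height of $\beta$ (which presupposes the well-foundedness available in the relevant setting), feeding the sharp law into the inductive step to compare $\mathfrak{I}(\alpha,\mu)$ with $\mathfrak{I}(\alpha,\beta)$ through the intermediate term $\mathfrak{I}(\gamma,\beta')$. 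Once monotonicity is in hand, the two facts above give $\delta\preccurlyeq_\Upsilon\mathfrak{I}(\alpha,\beta)$ for every $\delta\in\mathcal{I}_\Upsilon(\alpha,\beta)$, so $\mathfrak{I}$ is greatest.

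Combining the two implications with Proposition \ref{prop VI,2.28} yields the theorem. It is worth noting, in the spirit of this paper, that the sharp $\mathcal{I}$-law is nothing but the local-confluence (``domino'') condition for the length-two rewriting $\alpha\cdot\beta\mapsto\mathfrak{I}(\alpha,\beta)\cdot(\text{tail})$ induced by $\mathfrak{I}$ on $\Upsilon$-paths; from this vantage point the equivalence (i) $\Leftrightarrow$ (ii) is the germ-level shadow of the Composition--Diamond lemma, with the greatest $\mathcal{I}$-function playing the role of the maximal head and the sharp law guaranteeing confluence of the associated reduction system.
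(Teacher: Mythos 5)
First, note that the paper does not prove this statement at all: Theorem \ref{RecGar_germ} is quoted from \cite[A, VI, Proposition 2.8]{Dehbook} as a black box, so there is no in-paper proof to compare yours against. Your framing --- reduce to Proposition \ref{prop VI,2.28} by showing that, for a left-associative left-cancellative germ, ``admits a greatest $\mathcal{I}$-function'' is equivalent to ``admits an $\mathcal{I}$-function obeying the sharp $\mathcal{I}$-law'' --- is reasonable, and your first half of (i) $\Rightarrow$ (ii) correctly establishes $L \preccurlyeq_\Upsilon R$ and $R \preccurlyeq_\Upsilon L$, i.e.\ the \emph{non-sharp} law $L =^\times R$. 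But the step you flag as ``delicate'' is where the argument actually breaks: you invoke antisymmetry of $\preccurlyeq_\Upsilon$ to upgrade $=^\times$ to $=$, and $\preccurlyeq_\Upsilon$ is antisymmetric only when $\Upsilon^\times = \mathbf{1}_\Upsilon$. In the presence of nontrivial invertibles --- precisely the situation this paper is built to handle (Definition \ref{def_of_deformation}, Proposition \ref{deform_of_S-greedy}, and the example of \ref{ex_of_empty_GSB}) --- a greatest element of $\mathcal{I}_\Upsilon(\alpha,\beta)$ is unique only up to right multiplication by an invertible, a greatest $\mathcal{I}$-function need not obey the sharp law, and obtaining one that does requires an additional coherent-choice construction (a selector compatible across all pairs). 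That construction is the real content of the cited proposition and is absent from your proof.

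The converse direction (ii) $\Rightarrow$ (i) is also incomplete by your own admission: the monotonicity $\mu \preccurlyeq_\Upsilon \beta \Rightarrow \mathfrak{I}(\alpha,\mu) \preccurlyeq_\Upsilon \mathfrak{I}(\alpha,\beta)$ is exactly the maximality you are trying to prove, and the induction you sketch on the ``$\preccurlyeq_\Upsilon$-height of $\beta$'' presupposes a well-foundedness (Noetherianity) hypothesis that does not appear in the statement of the theorem. The standard route for this direction does not pass through greatestness at all: one iterates $\mathfrak{I}$ along an $\Upsilon$-path to produce a candidate decomposition and uses the sharp $\mathcal{I}$-law as the local ``domino'' condition guaranteeing that each length-two factor is $\Upsilon$-greedy, which yields an $\Upsilon$-normal decomposition of every element and hence the Garside property directly. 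Your closing remark that the sharp law is the confluence condition for the induced rewriting system is exactly the right intuition; the proof should be organized around that observation rather than around recovering a greatest $\mathcal{I}$-function.
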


\begin{proposition}[{\cite[A, VI, Lemma 2.4]{Dehbook}}]\label{greedy}
  If $\Upsilon$ is a Garside germ, the following are equivalent for every $\alpha, \beta \in \Upsilon$;
  \begin{enumerate}
      \item The path $\alpha \cdot \beta$ is $\Upsilon$-normal.
      \item The element $\alpha$ is $\prec_\Upsilon$-maximal in $\mathcal{I}_\Upsilon(\alpha,\beta)$.
      \item Every element of $\mathcal{J}_\Upsilon(\alpha,\beta)$ is invertible in $\Upsilon.$
  \end{enumerate}
\end{proposition}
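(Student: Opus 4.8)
The plan is to establish the two equivalences $(1)\Leftrightarrow(2)$ and $(2)\Leftrightarrow(3)$, treating $(2)$ as the central reformulation and reading everything through the left-divisibility relation $\preccurlyeq_\Upsilon$ of the germ. Throughout I would lean on the standing hypotheses that $\Upsilon$ is left-associative and left-cancellative and that, being a Garside germ, it admits a greatest $\mathcal{I}$-function $\mathfrak{I}$ by Proposition \ref{prop VI,2.28}. I would also record at the outset the trivial but constantly used fact that $\alpha=\alpha\bullet 1\in\mathcal{I}_\Upsilon(\alpha,\beta)$, so that $\alpha$ is automatically a lower element of $\mathcal{I}_\Upsilon(\alpha,\beta)$ and its $\prec_\Upsilon$-maximality just means that no member of $\mathcal{I}_\Upsilon(\alpha,\beta)$ strictly exceeds it.

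For $(2)\Leftrightarrow(3)$ I would set up the correspondence $\gamma\mapsto\alpha\bullet\gamma$ between the left-divisors $\gamma\preccurlyeq_\Upsilon\beta$ collected in $\mathcal{J}_\Upsilon(\alpha,\beta)$ (those for which the relevant product is defined) and the elements of $\mathcal{I}_\Upsilon(\alpha,\beta)$. The only genuine ingredient is left-cancellativity of the germ: for such a $\gamma$ one has $\alpha\preccurlyeq_\Upsilon\alpha\bullet\gamma$, with $\alpha=^\times\alpha\bullet\gamma$ exactly when $\gamma$ is invertible and $\alpha\prec_\Upsilon\alpha\bullet\gamma$ strictly when $\gamma$ is non-invertible. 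Hence ``$\alpha$ is $\prec_\Upsilon$-maximal in $\mathcal{I}_\Upsilon(\alpha,\beta)$'' and ``every element of $\mathcal{J}_\Upsilon(\alpha,\beta)$ is invertible'' are two phrasings of the single assertion that no non-invertible left-divisor of $\beta$ can be absorbed into $\alpha$. This is the routine half.

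For $(1)\Leftrightarrow(2)$ I would first reduce $\Upsilon$-normality of $\alpha\cdot\beta$ to $\Upsilon$-greediness (the entries already lie in $\Upsilon\subseteq\Upsilon^\sharp$), and, after checking that $\Upsilon$ is solid, replace greediness by its local form: $\alpha\cdot\beta$ is greedy iff every $s\in\Upsilon$ with $s\preccurlyeq\alpha\beta$ satisfies $s\preccurlyeq\alpha$. The bridge to $(2)$ is the identification $\mathcal{I}_\Upsilon(\alpha,\beta)=\{\,s\in\Upsilon : \alpha\preccurlyeq s\preccurlyeq\alpha\beta\,\}$; the inclusion $\subseteq$ is immediate by writing $\beta=\gamma\bullet\beta'$ and observing $\alpha\beta=(\alpha\bullet\gamma)\beta'$, while $\supseteq$ uses left-cancellativity to peel $\alpha$ off such an $s$ and the faithful transfer of the germ into $\mathsf{Cat}\langle\Upsilon\rangle$ to recognize the cofactor as a genuine $\preccurlyeq_\Upsilon$-divisor of $\beta$. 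Granting this, if $\alpha\cdot\beta$ is not greedy there is $s\in\Upsilon$ with $s\preccurlyeq\alpha\beta$ and $s\not\preccurlyeq\alpha$, and passing to the greatest $\Upsilon$-left-divisor of $\alpha\beta$ yields a member of $\mathcal{I}_\Upsilon(\alpha,\beta)$ strictly above $\alpha$; conversely a proper extension of $\alpha$ inside $\mathcal{I}_\Upsilon(\alpha,\beta)$ is exactly such an $s$.

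The step I expect to be the main obstacle is precisely this appeal to the greatest $\Upsilon$-left-divisor of the product $\alpha\beta$, namely the claim that $\mathfrak{I}(\alpha,\beta)$ is not merely the top of $\mathcal{I}_\Upsilon(\alpha,\beta)$ but the top of \emph{all} $\Upsilon$-left-divisors of $\alpha\beta$. The definition of a greatest $\mathcal{I}$-function only bounds elements already known to be multiples of $\alpha$, so one must still show that an arbitrary $\Upsilon$-divisor $s$ of $\alpha\beta$ with $s\not\preccurlyeq\alpha$ nevertheless lies below $\mathfrak{I}(\alpha,\beta)$ --- in effect a closure property of the $\Upsilon$-left-divisors of a fixed element under joins. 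This is where the full force of the Garside-germ hypothesis is needed (via Proposition \ref{prop VI,2.28}, and, if convenient, the sharp $\mathcal{I}$-law of Theorem \ref{RecGar_germ}), and where the translation between divisibility in $\mathsf{Cat}\langle\Upsilon\rangle$ and the germ operations $\bullet,\preccurlyeq_\Upsilon$ must be handled with care; everything else is a formal consequence of left-associativity and left-cancellativity.
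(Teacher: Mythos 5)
The paper does not prove this statement at all: Proposition \ref{greedy} is imported verbatim from \cite[A, VI, Lemma 2.4]{Dehbook} as background, with no \verb|proof| environment attached, so there is no in-paper argument to compare yours against. The closest thing the paper contains is the proof of Proposition \ref{J=invert}, which carries out exactly your $(2)\Leftrightarrow(3)$ computation (the correspondence $\gamma\mapsto\alpha\bullet\gamma$, and the derivation that $\gamma\bullet\gamma'=1$ forces $\gamma'\bullet\gamma=1$ via left-associativity, axiom (3) of Definition \ref{def_of_germ}, and left-cancellativity). That half of your plan, and the implication $(1)\Rightarrow(2)$ (greediness applied to $\delta=\alpha\bullet\gamma\preccurlyeq\alpha\beta$ gives $\delta\preccurlyeq\alpha$, hence $\delta=^\times\alpha$), are correct and essentially forced.

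However, as a proof your proposal is not complete, and you have located the hole yourself without filling it. The implication $(2)\Rightarrow(1)$ requires taking an arbitrary $s\in\Upsilon$ with $s\preccurlyeq a\alpha\beta$, $s\not\preccurlyeq a\alpha$ and manufacturing from it an element of $\mathcal{I}_\Upsilon(\alpha,\beta)$ strictly above $\alpha$; this needs (i) the reduction of $\Upsilon$-greediness to its local form with no prefix $a$, and (ii) the fact that $\mathfrak{I}(\alpha,\beta)$ is a greatest $\Upsilon^\sharp$-left-divisor (a head) of $\alpha\beta$ in $\mathsf{Cat}\langle\Upsilon\rangle$, not merely the top of the subfamily $\mathcal{I}_\Upsilon(\alpha,\beta)$ of right-multiples of $\alpha$. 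Neither follows formally from left-associativity, left-cancellativity, and the existence of a greatest $\mathcal{I}$-function as you have stated them; both are consequences of the surrounding development in \cite[A, VI]{Dehbook} (the $\mathcal{I}$-law of Theorem \ref{RecGar_germ} and the head/normal-form results it yields), and the same machinery is needed to justify your identification $\mathcal{I}_\Upsilon(\alpha,\beta)=\{s\in\Upsilon:\alpha\preccurlyeq s\preccurlyeq\alpha\beta\}$, since for a general germ an element $s\in\Upsilon$ with $\alpha\preccurlyeq s$ in the category need not arise from the partial product $\bullet$. Saying ``this is where the full force of the Garside-germ hypothesis is needed'' names the obstacle but does not discharge it; either supply the head property (e.g.\ by quoting the relevant lemma of \cite{Dehbook}) or accept that, like the paper, you are citing the result rather than proving it.
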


\begin{proposition}\label{Garside_germ=cat}
 Let $\mathscr{S}$ be a solid family generating a left-cancellative category $\mathscr{C}$, then $\mathscr{S}$ equipped with the induced partial product is a germ $\Upsilon(\mathscr{S})$. If the $\Upsilon(\mathscr{S})$ is a Garside germ then $\mathscr{C}$ is isomorphic to $\mathsf{Cat}\langle\Upsilon(\mathscr{S}) \rangle$.
\end{proposition}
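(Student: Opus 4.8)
The statement splits into two claims, and the plan is to dispatch them in turn. For the first, I would equip $\mathscr{S}$ with the partial product in which $\alpha\bullet\beta$ (for $\alpha,\beta\in\mathscr{S}$) is declared defined exactly when $\alpha\cdot\beta$ is a $\mathscr{C}$-path whose composite $\alpha\beta$, taken in $\mathscr{C}$, again lies in $\mathscr{S}$, and then set $\alpha\bullet\beta:=\alpha\beta$; here $\mathbf{1}_{\Upsilon(\mathscr{S})}:=\mathbf{1}_\mathscr{C}$, which is contained in $\mathscr{S}$ because $\mathscr{S}$ is solid. Axiom~(1) of Definition~\ref{def_of_germ} is immediate since composition in $\mathscr{C}$ respects source and target, and axiom~(2) holds because $1_x\alpha=\alpha=\alpha 1_y$ keeps $\alpha$ inside $\mathscr{S}$. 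For the associativity axiom~(3) I would observe that, once $\alpha\bullet\beta$ and $\beta\bullet\gamma$ are defined, each of $(\alpha\bullet\beta)\bullet\gamma$ and $\alpha\bullet(\beta\bullet\gamma)$ is defined exactly when the triple composite $\alpha\beta\gamma$ lies in $\mathscr{S}$, in which case both equal $\alpha\beta\gamma$ by associativity in $\mathscr{C}$. This produces the germ $\Upsilon(\mathscr{S})$.

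For the isomorphism I would first build a comparison functor. Viewing $\mathscr{S}=\Upsilon(\mathscr{S})$ as a generating graph and sending each generator to itself in $\mathscr{C}$, every defining relation $\alpha\bullet\beta=\alpha\beta$ of $\mathcal{R}_\bullet$ holds in $\mathscr{C}$ by the definition of $\bullet$, so the assignment passes to a functor $\Phi\colon\mathsf{Cat}\langle\Upsilon(\mathscr{S})\rangle\to\mathscr{C}$. Since $\mathscr{S}\supseteq\mathbf{1}_\mathscr{C}$ this $\Phi$ is the identity on objects, and since $\mathscr{S}$ generates $\mathscr{C}$ it is full; thus everything reduces to proving $\Phi$ faithful. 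The device I would use is a \emph{divisibility dictionary}: for $s,t\in\mathscr{S}$ one has $s\preccurlyeq t$ in $\mathscr{C}$ if and only if $s\preccurlyeq_{\Upsilon(\mathscr{S})}t$ in the germ. Indeed, if $t=su$ in $\mathscr{C}$ then $u$ is a right-divisor of $t\in\mathscr{S}$, hence $u\in\mathscr{S}$ by solidity and $s\bullet u=t$ is defined, the converse being trivial; this is the single point at which solidity is indispensable.

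Combining the dictionary with Proposition~\ref{greedy} (applicable because $\Upsilon(\mathscr{S})$ is a Garside germ), I would show that $\Phi$ sends $\Upsilon(\mathscr{S})$-normal paths to $\mathscr{S}$-normal paths: if $\alpha\cdot\beta$ is $\Upsilon(\mathscr{S})$-greedy then $\alpha$ is $\prec_{\Upsilon(\mathscr{S})}$-maximal in $\mathcal{I}_{\Upsilon(\mathscr{S})}(\alpha,\beta)$, and I would convert this, through the dictionary, into the statement that any $s\in\mathscr{S}$ left-dividing $\alpha\beta$ in $\mathscr{C}$ already left-divides $\alpha$ --- that is, $\mathscr{S}$-greediness --- while the entries, lying in $\Upsilon(\mathscr{S})^\sharp$, clearly map into $\mathscr{S}^\sharp$. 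Granting this, suppose $\Phi(w)=\Phi(w')$. Choosing $\Upsilon(\mathscr{S})$-normal decompositions of $w$ and $w'$ (which exist since the germ is Garside) and applying $\Phi$ produces two $\mathscr{S}$-normal decompositions of one and the same element of $\mathscr{C}$, so by Proposition~\ref{deform_of_S-greedy} they are $\mathscr{C}^\times$-deformations of one another. Because every invertible $\epsilon$ of $\mathscr{C}$ is a right-divisor of an identity and therefore lies in $\mathscr{S}$, I would lift the connecting relations $\epsilon_{i-1}\beta_i=\alpha_i\epsilon_i$ back to $\mathsf{Cat}\langle\Upsilon(\mathscr{S})\rangle$, so that the two decompositions become $\mathscr{C}^\times$-deformations there as well; telescoping them then yields $w=w'$, whence $\Phi$ is faithful and so an isomorphism.

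Since the germ axioms and the fullness of $\Phi$ are routine, I expect the real obstacle to lie in the greediness correspondence of the previous paragraph: one must reconcile the germ-theoretic maximality delivered by the greatest $\mathcal{I}$-function (Propositions~\ref{prop VI,2.28} and~\ref{RecGar_germ}) with $\mathscr{S}$-greediness \emph{as computed inside the ambient category $\mathscr{C}$}, taking honest care of the context quantifier $a$ in the definition of $\mathscr{S}$-greedy. The most delicate bookkeeping will concern the invertible elements, where I must guarantee that the $\mathscr{C}^\times$-deformation obtained in $\mathscr{C}$ can be transported relation by relation to $\mathsf{Cat}\langle\Upsilon(\mathscr{S})\rangle$; this is exactly where $\mathscr{C}^\times\subseteq\mathscr{S}$ and the left-cancellativity of the germ are used, and when the invertibles are trivial the whole step degenerates into a direct comparison of unique normal forms.
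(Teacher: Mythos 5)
Your construction of the germ and of the comparison functor $\Phi\colon\mathsf{Cat}\langle\Upsilon(\mathscr{S})\rangle\to\mathscr{C}$ is fine, and your ``divisibility dictionary'' for elements of $\mathscr{S}$ dividing elements of $\mathscr{S}$ is a correct use of solidity. The genuine gap is exactly at the step you yourself flag as the ``real obstacle'' and then do not resolve: the claim that $\Phi$ carries $\Upsilon(\mathscr{S})$-normal paths to $\mathscr{S}$-normal paths \emph{in $\mathscr{C}$}. The germ-level characterization (Proposition~\ref{greedy}) only controls witnesses that live in the germ: it tells you that every $\gamma\in\mathscr{S}$ with $\alpha\bullet\gamma$ defined and $\gamma\preccurlyeq_{\Upsilon}\beta$ is invertible. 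To get $\mathscr{S}$-greediness in $\mathscr{C}$ you must rule out every relation $s\preccurlyeq a\,\Phi(\alpha)\Phi(\beta)$ with $s\in\mathscr{S}$, $a\in\mathscr{C}$ arbitrary, and the witness $u$ with $su=a\Phi(\alpha)\Phi(\beta)$ an arbitrary element of $\mathscr{C}$. Before faithfulness of $\Phi$ is known, such a relation in $\mathscr{C}$ need not lift to a relation in $\mathsf{Cat}\langle\Upsilon(\mathscr{S})\rangle$ (a full but possibly non-faithful functor can create divisibility relations downstairs that have no preimage upstairs), so the dictionary does not apply and the argument is circular: the greediness transport is essentially equivalent to the statement being proved. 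A second, smaller gap is the lifting of the $\mathscr{C}^\times$-deformation: the connecting identities $\epsilon_{i-1}\beta_i=\alpha_i\epsilon_i$ are equalities in $\mathscr{C}$ whose two sides need not lie in $\mathscr{S}$ (solidity does not give closure under right multiplication by invertibles --- that is why $\mathscr{S}^\sharp$ exists), so they are not instances of the defining relations $\mathcal{R}_\bullet$ and you would still have to derive them inside $\mathsf{Cat}\langle\Upsilon(\mathscr{S})\rangle$.

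For comparison, the paper avoids this entire difficulty by arguing in the opposite direction: from the hypothesis that $\Upsilon(\mathscr{S})$ is a Garside germ it extracts the greatest $\mathcal{I}$-function, checks that this function satisfies the hypotheses of \cite[A, IV, Proposition 2.24]{Dehbook} (namely $\alpha\preccurlyeq\mathfrak{I}(\alpha,\beta)\preccurlyeq\alpha\beta$ together with the sharp $\mathcal{I}$-law), concludes that $\mathscr{S}$ is a solid Garside family \emph{in $\mathscr{C}$ itself}, and only then invokes Theorem~\ref{germ_from_Garside} (\cite[A, VI, Proposition 1.1]{Dehbook}) to obtain $\mathscr{C}\cong\mathsf{Cat}\langle\Upsilon(\mathscr{S})\rangle$. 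In effect you are trying to reprove that cited proposition from scratch; if you want a self-contained argument you would need to reproduce its analysis of the word problem in $\mathsf{Cat}\langle\Upsilon\rangle$, not just the normal-form uniqueness statement of Proposition~\ref{deform_of_S-greedy}.
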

\begin{proof}
To prove this we need the following results
\begin{lemma}[{\cite[A, VI, Proposition 1.1]{Dehbook}}]
   If $\mathscr{S}$ is a solid Garside family in a left-cacellative category $\mathscr{C}$, then $\mathscr{S}$ equipped with the induced partial product is a germ $\Upsilon(\mathscr{S})$ and $\mathscr{C}$ is isomorphic to $\mathsf{Cat}\langle\Upsilon(\mathscr{S}) \rangle$,
\end{lemma}
\begin{lemma}[{ \cite[A, IV, Proposition 2.24]{Dehbook}}]
 A solid generating subfamily $\mathscr{S}$ of a left-cancellative category $\mathscr{C}$ is a Garside family if and only if there exists $\mathfrak{I}:\mathscr{S}^{[2]} \to \mathscr{S}$ satisfying $\alpha \preccurlyeq \mathfrak{I}(\alpha,\beta) \preccurlyeq \alpha \cdot \beta$ for all $\alpha, \beta$ and $\mathfrak{I}(\alpha, \mathfrak{I}(\beta,\gamma)) = \mathfrak{I}(\alpha\beta, \gamma)$ for every $\alpha,\beta,\gamma \in \mathscr{S}$ satisfying $\alpha \cdot \beta \in \mathscr{S}.$   
\end{lemma}

Since $\Upsilon(\mathscr{S})$ is assumed to be a Garside germ then there is a $\mathcal{I}$-greatest function $\mathfrak{I}$. It is clear that it satisfies the statement above (see also proof of \cite[A, IV, Lemma 2.23, Proposition 2.24]{Dehbook} and \cite[IV, Proposition 2.8]{Dehbook}). Hence, by \cite[A, IV, Proposition 2.24]{Dehbook}, $\mathscr{S}$ is a Garside family in $\mathscr{C}$. Therefore, by Proposition \cite[A, VI, Proposition 1.1]{Dehbook}, the statement follows.
\end{proof}

\begin{theorem}[\textbf{germ from  Garside},{\cite[A, VI, Proposition 1.1]{Dehbook}}]\label{germ_from_Garside}
  If $\mathscr{S}$ is a solid Garside family in a left-cacellative category $\mathscr{C}$, then $\mathscr{S}$ equipped with the induced partial product is a germ $\Upsilon(\mathscr{S})$ and $\mathscr{C}$ is isomorphic to $\mathsf{Cat}\langle\Upsilon(\mathscr{S}) \rangle$.
\end{theorem}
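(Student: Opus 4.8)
The plan is to verify the germ axioms directly and then to build an isomorphism by comparing normal forms; I present the argument in four stages, the third being the real work.

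\textbf{The germ structure.} Equip $\mathscr{S}$ with the partial product $\bullet$ defined by $\alpha\bullet\beta = \alpha\beta$ exactly when $\alpha,\beta$ are composable in $\mathscr{C}$ and $\alpha\beta$ again lies in $\mathscr{S}$, and set $\mathbf{1}_\Upsilon = \mathbf{1}_\mathscr{C}$, which is contained in $\mathscr{S}$ by solidity. Axiom (1) of Definition \ref{def_of_germ} is immediate, axiom (2) holds because $1_x\alpha = \alpha = \alpha 1_y$ never leaves $\mathscr{S}$, and axiom (3) is exactly the associativity of composition in $\mathscr{C}$ together with the remark that $(\alpha\beta)\gamma = \alpha(\beta\gamma)$ lies in $\mathscr{S}$ or not independently of the bracketing. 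Thus $\Upsilon(\mathscr{S}) = (\mathscr{S},\bullet,\mathbf{1}_\mathscr{C})$ is a germ. I would also record two consequences of solidity used below: every invertible $\epsilon$ satisfies $1 = \epsilon^{-1}\epsilon$, so $\epsilon$ right-divides an identity and hence $\mathscr{C}^\times\subseteq\mathscr{S}$; and if $\alpha\beta\in\mathscr{S}$ then every right-divisor of $\alpha\beta$ is in $\mathscr{S}$, which makes the germ left-associative and, by left-cancellativity of $\mathscr{C}$, left-cancellative.

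\textbf{The comparison functor.} Let $\Phi\colon\mathsf{Cat}\langle\Upsilon(\mathscr{S})\rangle\to\mathscr{C}$ be the identity on objects (both categories carry the objects of $\mathscr{C}$, since $\mathbf{1}_\mathscr{C}\subseteq\mathscr{S}$) and send a path $\alpha_1\cdots\alpha_n$ to the product $\alpha_1\cdots\alpha_n$ evaluated in $\mathscr{C}$. This is well defined on the free category and kills the relations $\mathcal{R}_\bullet$, since whenever $\alpha\bullet\beta$ is defined both sides evaluate to the single element $\alpha\beta$ of $\mathscr{C}$, so $\Phi$ descends to $\mathsf{Cat}\langle\Upsilon(\mathscr{S})\rangle$. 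For surjectivity I use the Garside hypothesis: every $g\in\mathscr{C}$ has an $\mathscr{S}$-normal decomposition with entries in $\mathscr{S}^\sharp$, and since $\mathscr{C}^\times\subseteq\mathscr{S}$ each such entry is a product of elements of $\mathscr{S}$, so $g$ is the $\Phi$-image of a $\Upsilon(\mathscr{S})$-path. It remains to prove that $\Phi$ is faithful.

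\textbf{Faithfulness.} The plan is to show that $\mathscr{S}$-normal paths are normal forms for the congruence generated by $\mathcal{R}_\bullet$ as well. First I would prove a normalization statement: every $\Upsilon(\mathscr{S})$-path is $\mathcal{R}_\bullet$-equivalent to an $\mathscr{S}$-normal path with the same $\Phi$-image. The local move replaces a non-greedy pair $u\cdot v$ by $u'\cdot v'$, where $u' = H(uv)$ is the head of $uv$ in $\mathscr{C}$ (which exists because $\mathscr{S}$ is a Garside family) and $v'$ is the matching tail; writing $u' = u\delta$ and $v = \delta v'$, both $\delta$ and $v'$ are right-divisors of elements of $\mathscr{S}$, hence lie in $\mathscr{S}$, so the move is the composite of the two $\mathcal{R}_\bullet$-relations $u\bullet\delta = u'$ and $\delta\bullet v' = v$; the head being $\preccurlyeq$-strictly larger at each step, this greedy normalization terminates. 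Next, two $\mathscr{S}$-normal paths with the same $\Phi$-image are $\mathscr{C}^\times$-deformations of one another by Proposition \ref{deform_of_S-greedy}, and any such deformation is itself an $\mathcal{R}_\bullet$-equivalence: in each defining square $\epsilon_{i-1}b_i = a_i\epsilon_i$ the common value $c_i$ is a right-divisor of an element of $\mathscr{S}$, hence lies in $\mathscr{S}$, so $\epsilon_{i-1}\bullet b_i = c_i = a_i\bullet\epsilon_i$ are genuine $\mathcal{R}_\bullet$-relations, and invertible pairs $\epsilon\epsilon^{-1}=1$ permit the padding by identities (here $\mathscr{C}^\times\subseteq\mathscr{S}$ is essential). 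Combining, if $\Phi(w)=\Phi(w')$ then $w$ and $w'$ normalize to $\mathscr{S}$-normal paths $N(w),N(w')$ with $N(w)\thickapprox N(w')$, whence $w \equiv N(w)\equiv N(w')\equiv w'$ in $\mathsf{Cat}\langle\Upsilon(\mathscr{S})\rangle$. Thus $\Phi$ is an isomorphism.

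\textbf{The main obstacle.} The delicate point is the faithfulness step, and within it the interplay between normalization and the non-uniqueness of $\mathscr{S}$-normal decompositions; Proposition \ref{deform_of_S-greedy} is exactly what makes this tractable, reducing the ambiguity to $\mathscr{C}^\times$-deformations that solidity lets us realize inside $\mathsf{Cat}\langle\Upsilon(\mathscr{S})\rangle$. A second, more structural route avoids the explicit normalization: one checks that $\Upsilon(\mathscr{S})$ is a Garside germ — it is left-associative and left-cancellative by the first stage, and the head function should furnish a greatest $\mathcal{I}$-function — and then invokes Proposition \ref{Garside_germ=cat}. I expect the verification that the head yields an $\mathcal{I}$-function obeying the sharp $\mathcal{I}$-law in the sense of Theorem \ref{RecGar_germ} to be the main technical cost of that alternative.
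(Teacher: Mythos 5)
The paper offers no proof of this statement: it is imported verbatim from \cite[A, VI, Proposition 1.1]{Dehbook} and used as a black box (it even reappears, again unproved, as a lemma inside the proof of Proposition \ref{Garside_germ=cat}). So your attempt can only be measured against the cited source, and there your strategy --- verify the germ axioms, define the evaluation functor $\Phi$, deduce surjectivity from the existence of $\mathscr{S}$-normal decompositions, and deduce faithfulness by showing that $\mathscr{S}$-normal paths are also normal forms for the congruence generated by $\mathcal{R}_\bullet$, with Proposition \ref{deform_of_S-greedy} absorbing the non-uniqueness of normal decompositions --- is exactly the standard one. The verification of the germ axioms, the observation $\mathscr{C}^\times\subseteq\mathscr{S}$, left-associativity and left-cancellativity of the germ, and the translation of a $\mathscr{C}^\times$-deformation into a chain of $\mathcal{R}_\bullet$-moves are all correct as written.

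Two steps are thinner than they need to be. First, for the local move to be a composite of relations from $\mathcal{R}_\bullet$ you need $u'=u\delta$ to lie in $\mathscr{S}$, not merely in $\mathscr{S}^\sharp$: solidity gives $\mathscr{C}^\times\subseteq\mathscr{S}$ but not directly $\mathscr{S}\mathscr{C}^\times\subseteq\mathscr{S}$, so you must argue that the head of $uv$ can be chosen inside $\mathscr{S}$ (push the invertible factor into the second entry and use closure under right-divisors), and you must also justify that $uv$ admits a \emph{length-two} greedy decomposition at all --- take a normal decomposition $t_1\cdots t_p$ of $uv$, note $u\preccurlyeq t_1$ by greediness, and regroup as $t_1\cdot(t_2\cdots t_p)$, the tail being a right-divisor of $v$ and hence in $\mathscr{S}$. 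Second, the termination claim ``the head being $\preccurlyeq$-strictly larger at each step'' does not by itself show that unrestricted greedy rewriting stops: a move at position $i$ shrinks the entry at position $i+1$ and can destroy greediness of the pair at $(i+1,i+2)$, and no Noetherianity is assumed (the theorem applies, for instance, to $\mathscr{S}=\mathscr{C}$). The standard repair is a structured induction on the length of the path --- normalize the suffix, then push the first entry through in a single pass of finitely many local moves --- whose correctness rests on the first domino rule rather than on a well-foundedness argument. With these two points supplied your argument closes; note, however, that your proposed alternative route through Proposition \ref{Garside_germ=cat} would be circular inside this paper, since the proof of that proposition invokes precisely the theorem you are trying to establish.
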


\begin{remark}
 If a solid family $\mathscr{S}$ in a left-cancellative category $\mathscr{C}$ generates $\mathscr{C}$ and $\Upsilon(\mathscr{S})$ is a Garside germ then $\mathscr{C} \cong \mathsf{Cat}\langle \Upsilon(\mathscr{S})\rangle$ 
\end{remark}

\section{A Composition--Diamond Lemma for Categories}
In this section we present a ``defformed'' version of a Composition--Diamond lemma for categories. We essentially follows \cite{Be}. A sketch of a Composition--Diamond lemma for categories has been appeared in \cite[9.3]{Be} and it has been developed in detail in \cite{BCL12} (see also \cite[2.4]{BokSurv}).

\begin{definition}\label{category_algebra}
 Let $\mathbb{K}$ be a commutative ring and $\mathscr{C}$ a category. Define the \textit{category algebra} $\mathbb{K}\mathscr{C}$ to be the the free $\mathbb{K}$-module with the morphisms of the category $\mathscr{C}$ as a basis. The product of morphisms $a$ and $b$ as elements of $\mathbb{K}\mathscr{C}$ is defined to be
 \[
  a\cdot b: = \begin{cases} a b & \mbox{if $a$ and $b$ can be composed,} \\ 0 & \mbox{otherwise,} \end{cases}
 \]
 and this product is extended to the whole of $\mathbb{K}\mathscr{C}$ using bilinearity of multiplication.
\end{definition}

Given a graph $\Gamma$, consider the corresponding free category $\mathsf{Cat}\langle \Gamma \rangle$. Let $S$ be a set of pairs of the form $\sigma = (W_\sigma, \omega_\sigma)$, where $W_\sigma, \omega_\sigma \in \mathsf{Cat}\langle \Gamma \rangle$. For any $\sigma \in S$ and $A,B \in \mathsf{Cat}\langle \Gamma \rangle$, let $\mathfrak{r}_{A\sigma B}: \mathsf{Cat}\langle \Gamma \rangle \to \mathsf{Cat}\langle \Gamma \rangle$ be a functor defined as follows:
\[
 \mathfrak{r}_{A\sigma B}(W): = \begin{cases}
  A\omega_\sigma B & \mbox{if $W = W_\sigma$} \\
  W & \mbox{if $W \ne W_\sigma.$}
 \end{cases}
\]

We call the given set $S$ a \textit{reduction system}, and the maps $\mathfrak{r}_{A\sigma B}$ \textit{reductions,} if $\mathfrak{r}_{A\sigma B}(W) = W$, we say a reduction $\mathfrak{r}_{A\sigma B}$ acts \textit{trivially} on $W$, and we shall call $W$ \textit{irreducible under $S$} if every reduction is trivial on $W$, and we say that a word $W$ is \textit{reducible} in otherwise.

We shall frequently write $\mathfrak{r}_{A\sigma B}:W \to W'$ if $\mathfrak{r}_{A\sigma B}(W) = W'$.

Denote by $\mathrm{Irr}(S)$ the subset of all irreducible elements of $\mathsf{Cat}\langle \Gamma \rangle$ under all reductions from the set $S$. A finite sequence of reductions $\mathfrak{r}_1,\ldots, \mathfrak{r}_n$ will be said to be \textit{final} on $W\in \mathsf{Cat}\langle \Gamma \rangle$ if $\mathfrak{r}_n \cdots \mathfrak{r}_1(W) \in \mathrm{Irr}(S).$

An element $W$ of $\mathsf{Cat} \langle \Gamma \rangle$ will be called \textit{reduction-finite} if for every infinite sequence $\mathfrak{r}_1, \mathfrak{r}_2,\ldots,$ of reductions, $\mathfrak{r}_i$ acts trivially on $\mathfrak{r}_{i-1}\cdots \mathfrak{r}_1(W)$ for all sufficiently large $i$.

\begin{definition}
 A 5-tuple $(\sigma,\tau, A,B,C)$ with $\sigma, \tau \in S$, and $A,B,C \in \mathsf{Cat}\langle \Gamma \rangle \setminus \{\mathbf{1}\}$, is called \textit{overlap} (\textit{resp.} \textit{inclusion}) \textit{ambiguity} of $S$ if $W_\sigma = AB$, $W_\tau = BC$ (\textit{resp.} if $\sigma \ne \tau$ and $W_\sigma = B$, $W_\tau = ABC$).
\end{definition}

\begin{definition}\label{S-admissible}
Let $\thickapprox$ be a congruence on a free category $\mathsf{Cat}\langle \Gamma \rangle$ and $S$ a set of reductions. We call a congruence $\thickapprox$ \textit{$S$-admissible} if the following hold:
\begin{enumerate}
    \item if $W \notin \mathrm{Irr}(S)$ then $W' \notin \mathrm{Irr}(S)$ for any $W' \thickapprox W$,
    \item if $\omega \in \mathrm{Irr}(S)$ then $\omega' \in \mathrm{Irr}(S)$ for any $\omega' \thickapprox \omega.$
\end{enumerate}
\end{definition}

An element $W \in \mathsf{Cat} \langle \Gamma \rangle$ is called \textit{reduction-unique up to $\thickapprox$ under $S$} if it is reduction-finite, and if its images under all final sequences reductions and under using an $S$-admissible congruence are the same. This common value will be denoted by ${\mathfrak{r}}_S(W),$ and we also write $\mathfrak{r}: W \to \mathfrak{r}_S(W)$ if $\mathfrak{r}$ is a composition of reductions of $S$ only and we write $\mathfrak{r}:W \rightsquigarrow \mathfrak{r}_S(W)$ in otherwise.

\begin{remark}
 Note that if all elements of $\mathsf{Cat}\langle \Gamma \rangle$ are reduction unique up to $\thickapprox$ then the following condition holds; if $W \rightsquigarrow U$, $W' \rightsquigarrow U'$ then from $W\thickapprox W'$ if follows that $U' \thickapprox U'.$
\end{remark}

\begin{definition}\label{addmisible_preorder}
 By a \textit{$\thickapprox$-admissible categorical partial preorder} on a free category $\mathsf{Cat}\langle \Gamma \rangle$ we shall mean a partial preorder $\lessapprox$ such that for $A,B,B',C \in \mathsf{Cat}\langle \Gamma \rangle$ the following hold:
 \begin{enumerate}
     \item if $B < B'$ then $ABC < AB'C$,
     \item whenever $B<B'$, $B'<B$ it must have that $B\thickapprox B'$,
     \item if $B<A$ and $B \thickapprox C$ then $C <A,$
     \item if $A<B$, $B\thickapprox C$ then $A<C.$
 \end{enumerate}
 
 Next, we say that a $\thickapprox$-admissible categorical partial preoder is \textit{compatible} with $S$ if for all $(W_\sigma, \omega_\sigma) = \sigma \in S$, $\omega_\sigma < W_\sigma$, $W_\sigma \not < \omega_\sigma$. 
\end{definition}

In the case with a single object, which includes the case of a free monoid, we also call a \textit{$\thickapprox$-admissible categorical partial preorder} as a \textit{$\thickapprox$-admissible monomial partial preorder}.

For a given set of reductions $S=\cup_{\sigma}\{(W_\sigma,\omega_\sigma)\}$ on a free category $\mathsf{Cat}\langle \Gamma \rangle$ we consider the following set of polynomials $\cup_{\sigma\in S}\{W_\sigma - \omega_\sigma\}$, and we say that \textit{$(S)$ is the corresponding set of polynomials for a given set of reductions $S$.}

Next, let $\thickapprox$ be an $S$-admissible congruence on $\mathsf{Cat}\langle \Gamma \rangle$. Set $\widetilde{(S)}:=(S) \cup (S')$, where $(S') = \cup_{\sigma\in S}\{W_\sigma-W'_\sigma\, |\, W_\sigma \thickapprox W_\sigma',\, W_\sigma,W_\sigma' \notin \mathrm{Irr}(S)\}$ and consider a two-sided ideal, denoted by $I_\thickapprox(S)$, generated by $\widetilde{(S)}$.

\begin{remark}
 Note that an ideal $I_\thickapprox(S)$ cannot be described as a two-sided ideal generated by the following set of polynomials $\cup_{\sigma\in S}\{ W_\sigma - \omega'_\sigma\, |\, W_\sigma \rightsquigarrow \omega_\sigma' \}$. Indeed, let $\mathfrak{r}_1: W_\sigma \to \omega_\sigma$, and $\mathfrak{r}_s: W'_\sigma \to \omega'_\sigma$ be reductions of $S$. Then we have $\mathfrak{r}': W_\sigma \rightsquigarrow \omega_\sigma'$ but $\mathfrak{r}'\notin S$, $W_\sigma \notin \mathrm{Irr}(S)$, and $\omega_\sigma \in \mathrm{Irr}(S)$, hence $W_\sigma - \omega'_\sigma \notin I_\thickapprox(S)$.
\end{remark}

\begin{definition}
An overlap ambiguity $(\sigma,\tau, A,B,C)$ (\textit{resp.} inclusion ambiguity) is called \textit{resolvable up to} $\thickapprox$ if there exist compositions of reductions $\mathfrak{r}:\omega_\sigma C \rightsquigarrow \mathfrak{r}(\omega_\sigma C)$ and $\mathfrak{r}':A\omega_\tau \rightsquigarrow \mathfrak{r}'(A\tau_\sigma)$ (\textit{resp.} $\mathfrak{r}:\omega_\sigma \rightsquigarrow \mathfrak{r}(\omega_\sigma)$, $\mathfrak{r}':A\omega_\tau C \rightsquigarrow \mathfrak{r}'(A\omega_\tau C)$) such that ${\mathfrak{r}}(\omega_\sigma C) \thickapprox {\mathfrak{r}'}(A\omega_\tau)$ (\textit{resp.} ${\mathfrak{r}}(\omega_\sigma) \thickapprox {\mathfrak{r}'}(A\omega_\tau C)$).

If all ambiguities of $S$ are resolvable up to $\thickapprox$ under $S$ we then say that $S$ is \textit{confluent up to $\thickapprox$.}
\end{definition}

Let $\lessapprox$ be a categorical partial preorder on a free category $\mathsf{Cat}\langle \Gamma \rangle$ compatible with a reduction system $S$. Consider $f_\sigma= W_\sigma - \omega_\sigma, f_\tau = W_\tau - \omega_\tau \in (S)$. There are two kinds of compositions:
\begin{itemize}
    \item[(1)] if $ABC = W_\sigma C = AW_\tau$ with $\ell(W_\sigma) + \ell(W_\tau)> \ell(ABC)$, then the polynomial $(f_\sigma,f_\tau)_{ABC}:=f_\sigma C-Af_\tau$ is called the \textit{intersection composition} of $f_\sigma$ and $f_\tau$ with respect to $ABC$,
    \item[(2)] if $ABC= W_\sigma = AW_\tau C$, then the polynomial $(f_\sigma,f_\tau)_{ABC}:=f_\sigma-Af_\tau C$ is called the \textit{inclusion composition} of $f_\sigma$ and $f_\tau$ with respect to $ABC.$
\end{itemize}

Consider $g\in \mathbb{K}\mathsf{Cat}\langle \Gamma \rangle$. Set $g \thickapprox 0$ if $g$ can be presented as follows $g = \sum_{i=1}^n(U_i-V_i)$, where for any $1 \le i \le n$, $U_i,V_i \in \mathsf{Cat}\langle \Gamma \rangle$ and all $U_i \thickapprox V_i$. 
We say $g \in \mathbb{K}\mathsf{Cat}\langle \Gamma \rangle$ is \textit{trivial modulo $((S), W)$ up to $\thickapprox$}, denoted by $g \thickapprox 0 \, (\bmod{(S), W})$, here $W \in \mathsf{Cat}\langle \Gamma \rangle$, if $g$ can be presented as follows $g = \sum_{i=1}^n\sum_{\sigma_i \in S}U_if_{\sigma_i} V_i + g'$, where all $U_i,V_i \in \mathsf{Cat}\langle \Gamma \rangle$, $UW_\sigma V < W$, $g'\in \mathbb{K}\mathsf{Cat}\langle \Gamma \rangle$, $g'\thickapprox 0,$ and for all its monomials, say $G_j'$, we have $G_j'< W$.  

\begin{definition}
Let $S$ be a set of reductions on a free category $\mathsf{Cat}\langle \Gamma \rangle$, $\thickapprox$ an $S$-admissible congruence, and $\lessapprox$ a $\thickapprox$-admissible categorical partial preoder compatible with $S$. The corresponding set of polynomials $(S)$ is called a \textit{$\thickapprox$-closed Gr\"obner--Shirshov} basis in $\mathbb{K}\mathsf{Cat}\langle \Gamma \rangle$ with respect to $\lessapprox$ if every composition $(f_\sigma,f_\tau)_{ABC}$ of polynomials $f_\sigma, f_\tau \in (S)$ is trivial modulo $((S), ABC)$ up to $\thickapprox$. 
\end{definition}

To stress that a set $X \subseteq \mathrm{Cat}\langle\Gamma \rangle$ is considered as the set with a congruence $\thickapprox$ on $\mathrm{Cat}\langle \Gamma \rangle$ we write $(X,\thickapprox).$

\begin{theorem}[{\textbf{a $\thickapprox$-closed Composition--Diamond Lemma}}]\label{CD-lemma}
Let $\mathbb{K}$ be a commutative ring with unit. Let $\mathsf{Cat}\langle \Gamma \rangle$ be a free category, $S$ a reduction system, $(S)$ the corresponding set of polynomials, $\thickapprox$ an $S$-admissible congruence on $\mathsf{Cat}\langle \Gamma \rangle$, and $\lessapprox$ a $\thickapprox$-admissible categorical partial preordering compatible with $S$ having descending chain condition. Then the following conditions are equivalent:
  \begin{itemize}
      \item[(1)] All ambiguities of $S$ are $\thickapprox$-resolvable.
      \item[(2)] $(S)$ is a $\thickapprox$-closed Gr\"obner--Shirshov basis relative to $\lessapprox.$
      \item[(3)] All elements of $\mathbb{K}\mathsf{Cat}\langle \Gamma \rangle$ are reduction-unique up to $\thickapprox$ under $S$.
      \item[(4)] $(\mathrm{Irr}(S),\thickapprox)$ is a linear $\mathbb{K}$-basis of the algebra $\mathbb{K}\mathsf{Cat}\langle \Gamma \, | \widetilde{(S)} \rangle = \mathbb{K}\mathsf{Cat}\langle \Gamma \rangle / I_\thickapprox(S)$, where $\mathrm{Irr}(S):= \{W \in \mathsf{Cat}\langle \Gamma \rangle \, | \, W \ne UW_\sigma V , \sigma \in S, U,V \in \mathsf{Cat}\langle \Gamma \rangle \}.$
  \end{itemize}
\end{theorem}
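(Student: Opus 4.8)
The plan is to prove the four conditions equivalent by running the cycle $(1)\Rightarrow(2)\Rightarrow(3)\Rightarrow(4)\Rightarrow(1)$, following G. Bergman's original argument for the Diamond Lemma but carrying the congruence $\thickapprox$ through every step and invoking $S$-admissibility (Definition \ref{S-admissible}) each time the dichotomy reducible\slash irreducible interacts with $\thickapprox$. The standing framework is Noetherian: since $\lessapprox$ satisfies the descending chain condition and, by compatibility with $S$, each nontrivial reduction strictly decreases a word in $\lessapprox$ (clause (1) of Definition \ref{addmisible_preorder} applied to $\omega_\sigma<W_\sigma$), every $W\in\mathsf{Cat}\langle\Gamma\rangle$ is reduction-finite and we may argue by $\lessapprox$-induction. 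The implication $(1)\Rightarrow(2)$ I would obtain by unwinding the definitions: a resolvable ambiguity $(\sigma,\tau,A,B,C)$ yields chains $\mathfrak{r}:\omega_\sigma C\rightsquigarrow u$ and $\mathfrak{r}':A\omega_\tau\rightsquigarrow u'$ with $u\thickapprox u'$; writing each single reduction $AW_\rho B\to A\omega_\rho B$ as $Af_\rho B$ and collecting, the composition $(f_\sigma,f_\tau)_{ABC}=f_\sigma C-Af_\tau$ becomes a sum $\sum_i U_if_{\sigma_i}V_i$ with all $U_iW_{\sigma_i}V_i<ABC$, plus the remainder $u-u'\thickapprox 0$ whose monomials are again $<ABC$ by compatibility of $\lessapprox$. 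This is precisely triviality modulo $((S),ABC)$ up to $\thickapprox$; the inclusion case is identical.

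The core is $(2)\Rightarrow(3)$, the diamond step. Fixing $W$, assume inductively that every $W'<W$ is reduction-unique up to $\thickapprox$, and let $\mathfrak{r}_1,\mathfrak{r}_2$ act nontrivially on $W$ at occurrences of $W_\sigma$ and $W_\tau$. If the two occurrences are disjoint, the reductions commute and yield a genuine common reduct, so the two final values agree up to $\thickapprox$ by the induction hypothesis. If the occurrences overlap or nest, they form an overlap or inclusion ambiguity; triviality of the associated composition modulo $((S),W)$ up to $\thickapprox$, together with the induction hypothesis applied to all strictly smaller monomials that appear, forces the two reducts to reach $\thickapprox$-congruent irreducibles. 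A Newman-type passage, legitimate by the descending chain condition, then upgrades this local confluence up to $\thickapprox$ to full reduction-uniqueness up to $\thickapprox$ of $W$. Here condition (2) of Definition \ref{S-admissible} guarantees that a final value enters $\mathrm{Irr}(S)$ consistently, while condition (1) prevents a $\thickapprox$-move from re-entering $\mathrm{Irr}(S)$ prematurely; both are needed so that ``the same up to $\thickapprox$'' is a well-defined property of $\mathfrak{r}_S(W)$.

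For $(3)\Rightarrow(4)$ I would define the $\mathbb{K}$-linear map $\pi:\mathbb{K}\mathsf{Cat}\langle\Gamma\rangle\to\mathbb{K}\bigl(\mathrm{Irr}(S)/\thickapprox\bigr)$ by $\pi(W)=[\mathfrak{r}_S(W)]$, well defined by (3). To see that $\pi$ annihilates $I_\thickapprox(S)$ it suffices to check it on the generators of $\widetilde{(S)}$: each $f_\sigma=W_\sigma-\omega_\sigma$ maps to $0$ since $W_\sigma$ and $\omega_\sigma$ share a final value, and each generator $W_\sigma-W'_\sigma$ of $(S')$ maps to $0$ because $W_\sigma\thickapprox W'_\sigma$ are both reducible, whence by $S$-admissibility their reduction-unique values are $\thickapprox$-congruent. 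Thus $\pi$ induces a retraction of $\mathbb{K}\mathsf{Cat}\langle\Gamma\rangle/I_\thickapprox(S)$ onto the span of the classes of $\mathrm{Irr}(S)$; as these classes generate the quotient and $\pi$ is the identity on them, $(\mathrm{Irr}(S),\thickapprox)$ is a $\mathbb{K}$-basis. Finally $(4)\Rightarrow(1)$ is a uniqueness argument: for any ambiguity the two reduction outcomes are equal in $\mathbb{K}\mathsf{Cat}\langle\Gamma\,|\,\widetilde{(S)}\rangle$ and both reduce into $\mathrm{Irr}(S)$, so uniqueness of the basis forces their images to coincide in $\mathbb{K}(\mathrm{Irr}(S)/\thickapprox)$, that is, to be $\thickapprox$-congruent, which is resolvability up to $\thickapprox$.

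The main obstacle I anticipate is exactly the diamond step $(2)\Rightarrow(3)$ in the presence of $\thickapprox$. In the classical Diamond Lemma local confluence yields genuinely unique normal forms and the Noetherian induction merely propagates equality of final values; here confluence holds only up to $\thickapprox$, so one must propagate a congruence class rather than a single value. The delicate point is that when the two reducts of an overlap land on congruent but \emph{unequal} irreducibles, this congruence must survive being re-embedded into the ambient word and then reduced further: this is where one leans on $\thickapprox$ being a genuine categorical congruence (stable under $A(-)C$) together with all four clauses of Definition \ref{addmisible_preorder}, which ensure that $\thickapprox$-moves neither raise monomials nor disturb the $\lessapprox$-induction. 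Getting this bookkeeping exactly right, so that the diamond ``closes up to $\thickapprox$'' uniformly and the congruence classes of final values do not drift under iterated reduction, is the technical heart of the argument.
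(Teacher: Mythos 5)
Your proposal is correct and follows essentially the same route as the paper: Bergman's Diamond Lemma argument with Noetherian induction on $\lessapprox$, the three-case analysis (disjoint, overlap, inclusion) for the diamond step, and the identification of $(\mathrm{Irr}(S),\thickapprox)$ as a complement to $I_\thickapprox(S)$, with $S$-admissibility invoked exactly where the reducible/irreducible dichotomy meets $\thickapprox$. The only differences are organizational — you close the cycle via $(4)\Rightarrow(1)$ directly where the paper goes $(4)\Rightarrow(3)\Rightarrow(1)$, and you phrase $(3)\Rightarrow(4)$ as a retraction $\pi$ rather than the paper's direct-sum decomposition $\mathbb{K}\mathsf{Cat}\langle\Gamma\rangle\cong I_\thickapprox(S)\oplus(\mathrm{Irr}(S),\thickapprox)$ — but these are the same argument in different clothing.
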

\begin{proof}
 We easily see from our general hypothesis, by induction with respect to the partial preordering with descending chain condition $\lessapprox$, that every element of $\mathbb{K}\mathsf{Cat}\langle \Gamma \rangle$, is reduction-finite up to $\thickapprox.$
 
 If $(S) = \cup_{i \in I}\{W_i - \omega_i\}$ is a $\thickapprox$-closed Gr\"obner--Shirshov basis relative to $\lessapprox$ it then gives the following system of reduction $S = \cup_{i\in I}\{\sigma_i = (W_i, \omega_i)\}$ is compatible with $\lessapprox$.
 
 $(1) \Longleftrightarrow (2)$ Take $f_\sigma, f_\tau \in (S)$ and let $AW_\sigma = W_\tau C = ABC$ we then get
 \begin{eqnarray*}
  (f_\sigma, f_\tau)_{ABC} &=& A(W_\sigma - \omega_\sigma) - (W_\tau - \omega_\tau)C \\
  &=& -A\omega_\sigma + \omega_\tau C
 \end{eqnarray*}
 Let $\mathfrak{r}_S^{(1)}:A\omega_\sigma \rightsquigarrow A_1W_{\sigma_1}C_1$ and $\mathfrak{r}^{(1)'}_S:\omega_\tau C \rightsquigarrow A_1'W_{\tau_1}C_1'$, we then get
 \begin{eqnarray*}
 (f_\sigma, f_\tau)_{ABC} &=&  -A\omega_\sigma + \omega_\tau C \\
 &\thickapprox& -A_1(f_{\sigma_1} + \omega_{\sigma_1})C_1 + A_1'(f_{\tau_1} + \omega_{\tau_1})C_1' \\
 &=& -A_1f_{\sigma_1}C_1 + A_1'f_{\tau_1}C_1' - A_1\omega_{\sigma_1}C_1 + A_1'\omega_{\tau_1}C_1'.
 \end{eqnarray*}
 
Next, setting $\mathfrak{r}^{(2)}_S: A_1\omega_{\sigma_1}C_1 \rightsquigarrow A_2W_{\sigma_2}C_2$, $\mathfrak{r}^{(2)'}_S: A_1'\omega_{\tau_1}C_1'\rightsquigarrow A_2'W_{\tau_2}C_2'$ and so on, after some steps, we get
 \[
 (f_\sigma, f_\tau)_{ABC} \thickapprox -\sum_{i=1}^NA_if_{\sigma_i}C_i + \sum_{j=1}^M A_j'f_{\tau_j}C_j' - A_{N}\omega_{\sigma_{N}}C_{N} + A_{M}'\omega_{\tau_{M}}C_{M}'.
 \]

It is easy to see that from Definition \ref{addmisible_preorder} it follows that $A_iW_{\sigma_1}C_i, A_j'W_{\tau_1}C_j' < ABC$, for any $1\le i \le N$, $1 \le j \le M$.

If $(S)$ is a $\thickapprox$-completed Gr\"obner--Shirshov basis relative to $\lessapprox$ then $N,M < \infty$, and $A_{N}\omega_{\sigma_{N}}C_{N} \thickapprox A_{M}'\omega_{\tau_{M}}C_{M}'$. It follows that the ambiguity $(\sigma, \tau, AB,BC)$ is resolvable up to $\thickapprox$. Similarly one can easy get the converse statement. 
 
 $(2) \Longrightarrow (3)$. It is clear that it is sufficient to prove all elements $W \in \mathsf{Cat}\langle \Gamma \rangle$ reduction-unique up to $\thickapprox.$ We must show that given any two reductions $\mathfrak{r}_{L\sigma M'}$ and $\mathfrak{r}_{L'\tau M}$ each acting nontrivially on $W$ we shall have $\mathfrak{r}_\mathcal{S}(\mathfrak{r}_{L\sigma M'}(W)) \thickapprox \mathfrak{r}_\mathcal{S}(\mathfrak{r}_{L'\sigma M}(W))$. There are three cases, according to the relative locations of the subwords $W_\sigma$ and $W_\tau$ in the element $W$. We may assume without loss of generality that $\ell(L)\le \ell(L')$.
 
 \textit{Case 1.} The subwords $W_\sigma$ and $W_\tau$, overlap in $W$, but neither contains the other. Then $W = UABCV$, where $(\sigma, \tau, A,B,C)$ is an overlap ambiguity of $S.$ Let us consider the corresponding composition for the polynomials $f_\sigma, f_\tau \in (S)$, we have 
  \[
 (f_\sigma, f_\tau)_{ABC} \thickapprox -\sum_{i=1}^NA_if_{\sigma_i}C_i + \sum_{j=1}^M A_j'f_{\tau_j}C_j' - A_{N}\omega_{\sigma_{N}}C_{N} + A_{M}'\omega_{\tau_{M}}C_{M}'.
 \]
 
 Since $(S)$ has been assumed to be $\thickapprox$-completed Gr\"obner--Shirshov basis relative to $\lessapprox$ we then obtain, $N,M < \infty$, $A_iW_{\sigma_i}C_i, A_i'W_{\tau_i}C_i' < ABC$ for all $1\le i \le N$, $1 \le j \le M$, and $A_{N+1}\omega_{\sigma_{N+1}}C_{N+1} \thickapprox A_{M+1}'\omega_{\tau_{M+1}}C_{M+1}'$. Next, since $\thickapprox$ is assumed to be congruence on $\mathsf{Cat}\langle \Gamma \rangle$ then $UA_{N+1}\omega_{\sigma_{N+1}}C_{N+1}V \thickapprox UA_{M+1}'\omega_{\tau_{M+1}}C_{M+1}'V,$ as required.
 
 \textit{Case 2.} One of the subwords $W_\sigma$, $W_\tau$ of $W$ is contained in the other. Thus case is handled like the preceding, using the resolvability up to $\thickapprox$ of inclusion composition.
 
 \textit{Case 3.} Finally, $W_\sigma$, $W_\tau$ are disjoint subwords of $W$, but in this case the statement is obvious. 

 $(3) \Longleftrightarrow (4)$ Any reduction $\mathfrak{r}_S$ induces the corresponding $\mathbb{K}$-module homomorphism on $\mathbb{K}\mathsf{Cat}\langle \Gamma \rangle$, denote it also by $\mathfrak{r}_S.$ 
 
 Assuming (3). Take $f \in \mathbb{K}\mathsf{Cat}\langle \Gamma \rangle$, say, $f = \sum_{i=1}^M \kappa_i u_i$, where all $\kappa_i \in \mathbb{K}$, $u_i \in \mathsf{Cat}\langle \Gamma \rangle$, and $u_1>u_2>\cdots > u_M$. Since all reductions of $S$ are assumed to be unique up to $\thickapprox$ then for any $1\le i \le M$, $u_i \thickapprox A_1^{(i)}W_{\sigma_{t_1}}^{(i)}B_{1}^{(i)}$ for some $A_1^{(i)}, B_{1}^{(i)} \in \mathsf{Cat}\langle \Gamma \rangle.$ Set $\varphi_\sigma:=W_\sigma - \omega_\sigma$ for any $\sigma = (W_\sigma,\omega_\sigma)\in S$. We can write $W_\sigma = \varphi_\sigma + \omega_\sigma$, thus, by induction, we get
\[
 u_i \thickapprox \sum_{k_i=1}^{m_i}A_{k_i}^{(i)} \varphi^{(i)}_{\sigma_{t_{k_i}}}B_{k_i}^{(i)} + C_{k_{m_i}}^{(i)},
\]
where $C_{k_{m_i}}^{(i)} \in \mathrm{Irr}(S)$, and all $A_{k_i}^{(i)} \mathrm{LT}(\varphi^{(p)}_{\sigma_{t_{k_i}}})B_{k_i}^{(i)}, C_{k_{m_i}}^{(i)} \lessapprox u_p$, here $\mathrm{LT}(\varphi)$ denotes a leading term of $\varphi$ related to $\lessapprox$. It follows that (cf. \cite[2.1, Lemma 2]{BokSurv}) for any polynomial $f \in \mathbb{K}\mathsf{Cat}\langle \Gamma \rangle$ we have
\[
  f \thickapprox \sum_{p=1}^N \alpha_p U_p \varphi_{\sigma_p} U_p' + \sum_{q=1}^{N'} \beta_q V_q
\]
where all $\alpha_p,\beta_q \in \mathbb{K}$, $U_p,U_p' \in \mathsf{Cat}\langle \Gamma \rangle$, $V_q \in \mathrm{Irr}(S)$, and $U_p\mathrm{LT}(\varphi_p)U_p', V_q\lessapprox \mathrm{LT}(f)$.

Next, since $\thickapprox$ is assumed to be $S$-admissible then $\mathbb{K}\mathsf{Cat}\langle \Gamma \rangle \cong I_\thickapprox(S) \oplus (\mathrm{Irr}(S),\thickapprox)$ and the statement (4) follows.

Conversely, assume (4) and suppose $W \in \mathsf{Cat}\langle \Gamma \rangle$ can be reduced to either of $\omega, \omega' \in (\mathrm{Irr}(S),\thickapprox)$. Then $W-\omega, W - \omega' \in I_\thickapprox(S)$ and hence $\omega-\omega' \in I_\thickapprox(S)$. On the other hand, $\omega - \omega' \in (\mathrm{Irr}(S),\thickapprox)$, it follows that $\omega - \omega' \in (\mathrm{Irr}(S),\thickapprox) \cap I_\thickapprox(S) = \{0\}$, because of $\thickapprox$ is assumed to be $S$-admissible, and proving (3).
 
 Finally, the implication $(3) \Longrightarrow (1)$ is immediate. This completes the proof.
\end{proof}

\begin{remark}\label{rem_of_empty_GSB}
 So we see that for a given free category $\mathsf{Cat}\langle \Gamma \rangle$ we district two relations on it; reductions and congruence $\thickapprox$. We do not consider an ambiguity up to $\thickapprox$, \textit{i.e.,} let $S$ be a set of reductions, then we would define an ambiguity up to $\thickapprox$ as follows, say a $5$-tuple $(\sigma, \tau, A,B,C)$ is called overlap (\textit{resp.} inclusion) ambiguity if $W_\sigma \thickapprox AB$ and $w_\tau \thickapprox BC$ (\textit{resp.} $W_\sigma \thickapprox B$, $W_\tau \thickapprox ABC$). But it is easy to see that in this case we just add to a reduction system new reductions of form $W_\sigma \to AB$ and $W_\tau \to BC$. This is why we district this two relations.
\end{remark}

\begin{remark}\label{Buch-Shirsh}
It is well known that similar to the Buchberger algorithm of completion for commutative polynomials, there is a Shirshov algorithm for noncommutative polynomials. However, in contrast to the case of commutative algebras, the Shirshov algorithm  in general does not terminate in a finite number of steps (we cannot use Noetherianity in noncommutative case). In our case we of course can use the same way. If a subset $(S) \subseteq \mathbb{K}\mathsf{Cat}\langle \Gamma \rangle$ is not a $\thickapprox$-closed Gr\"obner--Shirshov basis then one can add to $(S)$ all nontrivial up to $\thickapprox$ compositions of polynomials from $(S)$. Continuing this process repeatedly we finally obtain a $\thickapprox$-closed Gr\"obner--Shirshov basis $\widehat{(S)}$ that contains $S$. As in the case of classical Buchbegrger--Shirshov algorithm we have to assume that the initial set $(S)$ is recursively enumerable (e.g., finite) and that the underlying ring $K$ is computable. The resulting set $\widehat{(S)}$ is also recursively enumerable but not necessarily finite even if $(S)$ was.
\end{remark}

\begin{example} Let us consider the symmetric group $\mathfrak{S}_n$, $n\ge 3$, in a Coxeter presentation
  \[
 \mathfrak{S}_n = \mathsf{Smg}\langle s_1,\ldots, s_{n-1}\ | \, \mathcal{R} \rangle,
\]
where $\mathcal{R}$ is the following set of relations:
\begin{enumerate}
    \item $s_i^2 = 1,$ for all $1\le i \le n-1$,
    \item $s_is_j = s_js_i$ for all $1\le i,j \le n-1$ such that $|i-j|>1$, 
    \item $s_{i+1}s_{i}s_{i+1} = s_is_{i+1}s_i$ for all $1\le i \le n-2$.
\end{enumerate}

We have already remarked that $s_ks_{k+1}s_k \thickapprox s_{k+1}s_ks_{k+1}$ and $s_is_j \thickapprox s_js_i$ because the corresponding diagrams
\[
 \xymatrix{
  \cdot \ar@{->}[r]^{s_k} \ar@{=}[d] & \cdot \ar@{->}[r]^{s_{k+1}} \ar@{->}[d]^{s_ks_{k+1}} & \cdot \ar@{->}[r]^{s_k} \ar@{->}[d]^{s_ks_{k+1}} & \cdot \ar@{=}[d] \\
  \cdot \ar@{->}[r]_{s_{k+1}} & \cdot \ar@{->}[r]_{s_{k}} & \cdot \ar@{->}[r]_{s_{k+1}} & \cdot
 }
 \]
 and
 \[
 \xymatrix{
  \cdot \ar@{->}[r]^{s_i} \ar@{=}[d] & \cdot \ar@{->}[d]_{s_is_j} \ar@{->}[r]^{s_j} & \cdot \ar@{=}[d] \\
  \cdot \ar@{->}[r]_{s_j}  & \cdot \ar@{->}[r]_{s_i} & \cdot
 }
\]
are commutative.

Consider the following set of reductions $S = \cup_{i=1}^{n-1}\{(s_is_i, 1)\}$ on the free monoid $W(S)$ generated by $s_1,\ldots, s_{n-1}$. Thus a word $U \in W(S)$ is reducible if and only if there is $U_1,U_2 \in W(S)$ such that $U = U_1s_is_iU_2$ for some $s_i \in \{s_1,\ldots, s_{n-1}\}.$ It is easy to see that $\thickapprox$ is $S$-admissible congruence.

Let us consider a partial preoder $<$ on $W(S)$ defined as follows: $U<V$ if and only if $\ell(U) < \ell(V)$, where $\ell(U)$ is a length of a word $U\in W(S).$ We see that if $U \thickapprox U'$ then $\ell(U) = \ell(U')$, and if $\mathfrak{r}:U \to V$, for some reduction $\mathfrak{r}$, then $\ell(U) > \ell(V).$ Thus, it is clear that $<$ is a $\thickapprox$-admissible monomial partial preodering on $W(S).$

Next, by the straightforward computations, it is easy to see that all ambiguities of $S$ are resolvable up to $\thickapprox$ and hence $(S) = \cup_{i=1}^{n-1}\{s_i^2-1\}$ is a $\thickapprox$-closed Gr\"obner--Shirshov basis with respect to the $<$. Thus, by Theorem \ref{CD-lemma}, the corresponding set $(\mathrm{Irr}(S),\thickapprox)$ is a set of squarefree words in $s_i$, $1\le i \le n-1.$
\begin{flushright}
$\square$
\end{flushright}
\end{example}

\begin{remark}\label{def_CD=CD}
 Since ``the usual equality'', \textit{i.e.,} $W \thickapprox W'$ if and only if $W=W'$ for all elements of a free category, is, of course, a congruence then $=$ replacing $\thickapprox$ we then get the classical Composition--Diamond Lemma for categories (see \cite[9.3]{Be}, \cite{BCL12}, \cite{BokSurv}). In such cases we shall say just a Gr\"obner--Shirshov basis instead of $=$-closed Gr\"onber--Shirshov basis.
\end{remark}

\begin{example}
  Let us consider the following graph $\Gamma$ shown below
  \[
   \xymatrix{
   x \ar@(ul,dl)_d \ar@/^/[r]^a  & y \ar@(ur,dr)^c \ar@/^/[l]^b 
   }
  \]
  
Set $\mathscr{C} = \mathsf{Cat}\langle \Gamma \, | \, d^2 =ab, c^2 = ba  \rangle$, and let $R$ be the ideal $(d^2 - ab, c^2-ba)$ in $\mathbb{K}\mathsf{Cat}\langle \Gamma \rangle$. Let $a<b<c<d$ and consider the corresponding deg-lex ordering on the free category $\mathsf{Cat}\langle \Gamma \rangle.$  

Put $\sigma = (d^2, ab)$, $\rho  = (c^2, ba)$. We have the following ambiguities $(\sigma, \sigma, d,d,d)$, $(\rho,\rho, c,c,c)$. We have 
\[
 \xymatrix{
& ddd \ar@{->}[rd]^\sigma \ar@{->}[ld]_\sigma& \\
abd && dab
 }
 \qquad
 \xymatrix{
 & ccc \ar@{->}[rd]^\rho \ar@{->}[ld]_\rho & \\
 cba && bca
 }
\]

Thus, by the Buchberger--Shirshov algorithm, to calculate a Gr\"obner--Shirshov basis relative to the $\le$ we have to add to $S$ these two reductions $\mu_1=(dab, abd)$, $\mu_2 = (cba, bca)$. We have the following ambiguities $(\sigma, \rho_1, d,d,ab)$, $(\rho, \mu_2, c,c,ba)$.

We get
\[
 \xymatrix{
 & dabd \ar@{->}[r]^{\mu_1} & abd^2 \ar@{->}[r]^{\sigma} & abab \\
 ddab  \ar@{->}[ru]^{\mu_1} \ar@{->}[rd]_{\sigma}&&& \\
 & abab \ar@{=}[r] & abab \ar@{=}[r] & abab
 }
\]
and
\[
 \xymatrix{
 & cbac \ar@{->}[r]^{\mu_2} & bac^2 \ar@{->}[r]^{\rho} & baba \\
 ccba  \ar@{->}[ru]^{\mu_2} \ar@{->}[rd]_{\rho}&&& \\
 & baba \ar@{=}[r] & baba \ar@{=}[r] & baba
 }
\]

Thus all ambiguities of the reduction system $S' = \{\sigma,\rho,\mu_1,\mu_2\}$ are resolvable, hence by the Composition--Diamond lemma the set $(d^2-ab, c^2-ba, dab-abd, cba-bca)$ is a Gr\"obner--Shirshov basis of the ideal $(R)$, and, therefore all basic elements of the category $\mathscr{C}$ (a $\mathbb{K}$-basis of the algebra $\mathbb{K}\mathscr{C}$) are elements of form $w \in \mathsf{Cat}\langle \Gamma \rangle$ such that $w \ne u w' v$ where $u,v,w'\in \mathsf{Cat}\langle \Gamma \rangle$ and $w' \ne d^2,c^2,dab,cba.$
\begin{flushright}
 $\square$
\end{flushright}
\end{example}

\section{Garside Theory and Gr\"obner--Shirshov basis}

This is a key section of this paper. We show how the main concepts of Garside theory can be obtained by using Theorem \ref{The_Main_Result}. We shall also see that in same cases the corresponding greedy normal form is exactly a Gr\"obner--Shirshov normal form. For a left-cancellative category and for its arbitrary subfamily we construct a set of reductions and we then show that this subfamily is Garside if and only if the set of reductions is confluent up to a congruence $\thickapprox$ (=deformations of paths by invertible elements).

We start with the following main construction which will be frequently used.

\begin{construction}\label{the_main_construction}
Let $\mathscr{C}$, $\mathscr{A}$ be categories with surjective functor $P:\mathscr{C} \to \mathscr{A}$, \textit{i.e.,} both its restrictions to objects and elements (=morphisms) are surjective. Let us assume that $\mathscr{A}$ has a presentation $\mathscr{A} = \mathsf{Cat} \langle \Gamma \, | \, R \rangle$. Consider a categorical order $\le$ on the free category $\mathsf{Cat}\langle \Gamma \rangle$. Let us assume that we know a Gr\"obner--Shirshov basis of the ideal $(R)$ relative to $\le$. Hence, by the Composition--Diamond lemma (= the classical verse, see Theorem \ref{CD-lemma} and Remark \ref{def_CD=CD}), we know a  basis $\mathrm{Irr}(R)$ (= a set of all irreducible elements) of $\mathscr{A}.$

Let us consider a map (=a section of $P$) $E: \mathscr{A} \to \mathscr{C}$ that is not a functor in general but $P\circ E = \mathrm{id}_\mathscr{C}$. For a given subset $\mathfrak{B} \subseteq \mathrm{Irr}(R)$ we construct the following germ
\[
 \Upsilon_E(\mathscr{C},\mathscr{A},P,\mathfrak{B}): = \bigcup_{a\in \mathfrak{B}} \left\{E_a, \, | \, E_a \bullet E_b:=E_{\mathrm{NF}(ab)} \in \Upsilon \mbox{ whenever $E(a \bullet b) = E(a) E(b)$}  \right\}.
\]

In the case $\mathscr{A} = \mathscr{C}$ and $P = \mathbf{id}_\mathscr{C}$ is an identity functor we then denote the germ $\Upsilon_E(\mathscr{C},\mathscr{C},\mathbf{id}_\mathscr{C},\mathfrak{B})$ by $\Upsilon_E(\mathscr{C},\mathfrak{B})$. We consider this partial case in details later in this section.

\end{construction}

\begin{definition}\label{order}
 For any $E_x, E_y \in \Upsilon$ we set $E_x \gtrapprox E_y$ if $E_x \preccurlyeq E_y$ and $E_y \not \preccurlyeq E_x$.
\end{definition}

\begin{lemma}
 If a germ $\Upsilon_E(\mathscr{C},\mathscr{A},P,\mathfrak{B})$ is left-associative then the binary operation $\gtrapprox$ is a partial preorder on the set $\{E_x, \, x \in \mathfrak{B}\}$.
\end{lemma}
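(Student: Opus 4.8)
The plan is to recognize $\gtrapprox$ as the strict (asymmetric) part of the left-divisibility relation $\preccurlyeq_\Upsilon$ on the germ, and then to reduce the claim to two standard facts: that $\preccurlyeq_\Upsilon$ is reflexive and transitive, and that the strict part of any preorder is itself transitive and irreflexive. Concretely, for $E_x, E_y \in \Upsilon_E(\mathscr{C},\mathscr{A},P,\mathfrak{B})$ one has $E_x \preccurlyeq_\Upsilon E_y$ exactly when there is some $u \in \Upsilon$ with $E_x \bullet u$ defined and equal to $E_y$, and by Definition \ref{order} the relation $\gtrapprox$ records precisely that $E_x \preccurlyeq_\Upsilon E_y$ while $E_y \not\preccurlyeq_\Upsilon E_x$. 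So everything hinges on the order-theoretic properties of $\preccurlyeq_\Upsilon$.

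First I would verify reflexivity of $\preccurlyeq_\Upsilon$: taking $u$ to be the appropriate identity in $\mathbf{1}_\Upsilon$, axiom (2) of a germ gives $E_x \bullet 1 = E_x$, so $E_x \preccurlyeq_\Upsilon E_x$ always holds. This is the only place the unit part of the structure is used, and it immediately yields irreflexivity of $\gtrapprox$: the defining clause $E_x \not\preccurlyeq_\Upsilon E_x$ can never be satisfied.

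The hard part will be transitivity of $\preccurlyeq_\Upsilon$, and this is exactly where left-associativity enters. Suppose $E_x \bullet u = E_y$ and $E_y \bullet v = E_z$ with $u,v \in \Upsilon$. Then $(E_x \bullet u)\bullet v = E_y \bullet v = E_z$ is defined, so by the left-associativity axiom (4) the product $u \bullet v$ is defined as well. Now both $E_x \bullet u$ and $u \bullet v$ are defined, so the partial-associativity axiom (3) of a germ applies and gives that $E_x \bullet (u \bullet v)$ is defined and equal to $(E_x \bullet u)\bullet v = E_z$. Since $u \bullet v$ lies in $\Upsilon$, this exhibits a witness for $E_x \preccurlyeq_\Upsilon E_z$, proving transitivity. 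The subtlety — and the reason the hypothesis cannot be dropped — is that a germ is only partially associative, so without knowing in advance that $u \bullet v$ is defined, axiom (3) is simply inapplicable; left-associativity is precisely the device that recovers this inner defined product from the definedness of the outer one.

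Finally I would assemble the conclusion by the standard passage from a preorder to its strict part. Transitivity of $\gtrapprox$ then follows formally: from $E_x \preccurlyeq_\Upsilon E_y$, $E_y \preccurlyeq_\Upsilon E_z$ and transitivity of $\preccurlyeq_\Upsilon$ we obtain $E_x \preccurlyeq_\Upsilon E_z$, while a putative $E_z \preccurlyeq_\Upsilon E_x$ together with $E_x \preccurlyeq_\Upsilon E_y$ would force $E_z \preccurlyeq_\Upsilon E_y$, contradicting $E_z \not\preccurlyeq_\Upsilon E_y$; hence $E_x \gtrapprox E_z$. Combined with the irreflexivity noted above, this shows $\gtrapprox$ is a (strict) partial preorder on $\{E_x : x \in \mathfrak{B}\}$, as claimed.
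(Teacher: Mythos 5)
Your proof is correct and follows essentially the same route as the paper's: both read $\gtrapprox$ as the strict part of left-divisibility inside the germ and prove transitivity by first invoking left-associativity (axiom (4)) to see that the inner product $u\bullet v$ is defined and then axiom (3) to reassociate, exactly as the paper does with $E_c=(E_a\bullet E_{a'})\bullet E_{b'}=E_a\bullet(E_{a'}\bullet E_{b'})$, and both rule out $E_z\preccurlyeq E_x$ by the same contradiction against $E_z\not\preccurlyeq E_y$. The only divergence is reflexivity: the paper's step (1) asserts $E_a\gtrapprox E_a$ from $a=a\mathbf{1}_{\mathfrak{s}(a)}$, which is actually inconsistent with Definition \ref{order} as literally stated (it would require $E_a\not\preccurlyeq E_a$), so your conclusion that $\gtrapprox$ is irreflexive, i.e.\ a strict order rather than a reflexive preorder, is the reading compatible with the stated definition.
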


\begin{proof}~\\
(1) It is clear that $E_a \gtrapprox E_a$ because of $a = a\mathbf{1}_{\mathfrak{s}(a)}$.~\\
(2) Let $E_a \gtrapprox E_b$ and $E_b \gtrapprox E_a$. Then we have $b = aa'$ and $a = bb'$ in $\mathscr{A}$ with $E_a\bullet E_{a'}, E_b \bullet E_{b'} \in \Upsilon$ that contradicts to the definition of $\gtrapprox$.~\\
(3) Let $E_a \gtrapprox E_b$ then $b = aa'$, $E_a\bullet E_{a'} \in \Upsilon$, there is no any relation $a = bb'$ in $\mathscr{A}$ with $E_b\bullet E_{b'}\in \Upsilon$. Let $E_b \gtrapprox E_c$ then $c =bb'$ with $E_b \bullet E_{b'} \in \Upsilon$ and there is no any relation $b=cc'$ in $\mathscr{A}$ with $E_c \bullet E_{c'} \in \Upsilon$.

It follows that $c = bb' = aa'b'$. $E_c = E_b\bullet E_{b'} = (E_a\bullet E_{a'})\bullet E_{b'}$ and by left-associativity of $\Upsilon$, $E_{a'} \bullet E_{b'} \in \Upsilon$. Hence $E_c = E_a \bullet E_{a'b'}.$

Assume now that $a = cc''$ with $E_c\bullet E_{c''} \in \Upsilon$. Then $b = aa'  = cc''a'$. $E_b = E_a\bullet E_{a'} = (E_c\bullet E_{c''})\bullet E_{a'}$ and by left-associativity of $\Upsilon$, $E_{c''}\bullet E_{a'} \in \Upsilon$ thus $E_{b} = E_{c}\bullet E_{\mathrm{NF}(c''a')}$ that gives a contradiction to the assumption $E_b \gtrapprox E_c$. It implies that $E_a \gtrapprox E_c.$ This completes the proof.
\end{proof}

\begin{definition}\label{reducible_elements} Let $\Upsilon = \Upsilon_E(\mathscr{C},\mathscr{A},P,\mathfrak{B})$ be a left-associative and left-cancellative germ. An element $E_xE_y\in \Upsilon^{[2]}$ is called \textit{reducible} if $y = y_1y_2$ with $E_x\bullet E_{y_1}, E_{y_1}\bullet E_{y_2} \in \Upsilon$ and $E_x \gtrapprox E_{\mathrm{NF}(xy_1)}$. We write $E_xE_y \to E_{\mathrm{NF}(xy_1)}E_{y_2}$ (of course, the case $E_{y_2} = \mathbf{1}_{\mathfrak{t}(y_1)}$ is also allowed). In otherwise the element $E_xE_y$ is called \textit{irreducible}.  
\end{definition}

\begin{construction}\label{reduction_system}
Keep the notations used in the previous Definition and Construction \ref{the_main_construction}. Introduce the following set of reductions 
\[
 S_\Upsilon(\mathfrak{B}):=\bigcup_{E_xE_y\in \Upsilon^{[2]}} \{\mathfrak{r}_{x,y}:E_xE_y \to E_{\mathrm{NF}(xy_1)}E_{y_2}\},
\]
and set
\[
 \overline{S}_\Upsilon(\mathfrak{B}):=\bigcup_{\substack{ E_xE_y\in \Upsilon^{[2]} \\ E_{\mathrm{NF}(xy_1)}E_{y_2} \in \mathrm{Irr}(S_\Upsilon(\mathfrak{B})) }}  \{ \mathfrak{r}_{x,y}:E_xE_y \to E_{\mathrm{NF}(xy_1)}E_{y_2}\}.
\]

It is clear that $\overline{S}_\Upsilon(\mathfrak{B}) \subseteq S_\Upsilon(\mathfrak{B}).$
\end{construction}

\begin{proposition}\label{J=invert}
Let us consider a germ $\Upsilon =\Upsilon_E(\mathscr{C},\mathscr{A},P,\mathfrak{B})$. An element $E_aE_b \in \Upsilon^{[2]}$ is irreducible if and only if the set $\mathcal{J}_\Upsilon(E_a,E_b)$ consists of invertible elements of $\Upsilon.$
\end{proposition}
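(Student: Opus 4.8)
The plan is to reduce both sides of the equivalence to a single statement about the cofactors occurring in the germ-factorizations of $E_b$, and then to prove that statement. First I would unpack the two conditions. By Definition \ref{reducible_elements}, $E_aE_b$ fails to be irreducible exactly when $b$ admits a factorization $b = b_1b_2$ in $\mathscr{A}$ with $E_a\bullet E_{b_1}\in\Upsilon$, $E_{b_1}\bullet E_{b_2}\in\Upsilon$, and $E_a\gtrapprox E_{\mathrm{NF}(ab_1)}$; hence $E_aE_b$ is irreducible if and only if every such factorization satisfies $E_a\not\gtrapprox E_{\mathrm{NF}(ab_1)}$. On the other side, by Definition \ref{def_of_I&J} an element of $\mathcal{J}_\Upsilon(E_a,E_b)$ is an element $\gamma = E_{b_1}\in\Upsilon$ with $E_a\bullet E_{b_1}\in\Upsilon$ and $E_{b_1}\preccurlyeq_\Upsilon E_b$; and $E_{b_1}\preccurlyeq_\Upsilon E_b$ means precisely that $E_{b_1}\bullet E_{b_2} = E_b$ for some $E_{b_2}\in\Upsilon$, i.e. $b = b_1b_2$ with $E_{b_1}\bullet E_{b_2}\in\Upsilon$ (note $E_{b_1}\bullet E_{b_2} = E_{\mathrm{NF}(b_1b_2)} = E_b$ since $b$ is irreducible). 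Thus $\mathcal{J}_\Upsilon(E_a,E_b)$ is exactly the family of cofactors $E_{b_1}$ arising in these factorizations, and the proposition will follow once I establish the equivalence below.

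I would then prove the following \textbf{Key Lemma}: for $E_{b_1}\in\Upsilon$ with $E_a\bullet E_{b_1}$ defined, one has $E_a\gtrapprox E_{\mathrm{NF}(ab_1)}$ if and only if $E_{b_1}$ is non-invertible. Since $E_{\mathrm{NF}(ab_1)} = E_a\bullet E_{b_1}$, the relation $E_a\preccurlyeq_\Upsilon E_{\mathrm{NF}(ab_1)}$ holds automatically with cofactor $E_{b_1}$, so by Definition \ref{order} the condition $E_a\gtrapprox E_{\mathrm{NF}(ab_1)}$ is equivalent to $E_{\mathrm{NF}(ab_1)}\not\preccurlyeq_\Upsilon E_a$; it therefore suffices to show that $E_{\mathrm{NF}(ab_1)}\preccurlyeq_\Upsilon E_a$ holds if and only if $E_{b_1}$ is invertible. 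For the easy direction, if $E_{b_1}$ has inverse $E_{b_1}'$ then axiom (3) of Definition \ref{def_of_germ} gives $E_{\mathrm{NF}(ab_1)}\bullet E_{b_1}' = E_a\bullet(E_{b_1}\bullet E_{b_1}') = E_a$, so $E_{\mathrm{NF}(ab_1)}\preccurlyeq_\Upsilon E_a$. For the converse, I would pass to the ambient left-cancellative category $\mathscr{C}$, in which a defined germ product is the genuine composition (Construction \ref{the_main_construction}): from $E_{\mathrm{NF}(ab_1)} v = E_a$ and $E_{\mathrm{NF}(ab_1)} = E_aE_{b_1}$ one gets $E_a E_{b_1} v = E_a$, and left-cancelling $E_a$ yields $E_{b_1} v = \mathbf{1}$; substituting back gives $E_{\mathrm{NF}(ab_1)} v E_{b_1} = E_{\mathrm{NF}(ab_1)}$, and left-cancelling $E_{\mathrm{NF}(ab_1)}$ yields $v E_{b_1} = \mathbf{1}$, so $E_{b_1}$ is invertible.

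With the Key Lemma, the conclusion is immediate: as $E_{b_1}$ ranges over the cofactors of the factorizations $b = b_1b_2$ identified above, the reducibility clause $E_a\gtrapprox E_{\mathrm{NF}(ab_1)}$ holds for some factorization if and only if some element of $\mathcal{J}_\Upsilon(E_a,E_b)$ is non-invertible; contrapositively, $E_aE_b$ is irreducible if and only if every element of $\mathcal{J}_\Upsilon(E_a,E_b)$ is invertible, which is the assertion. I expect the main obstacle to be the converse half of the Key Lemma, namely deducing that $E_{b_1}$ is invertible from the mutual divisibility $E_a\preccurlyeq_\Upsilon E_{\mathrm{NF}(ab_1)}\preccurlyeq_\Upsilon E_a$. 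The delicate point is that the germ operation $\bullet$ is only partial, so the cancellation argument must be carried out in $\mathscr{C}$ rather than inside $\Upsilon$; the care lies in checking, via axioms (3) and (4) of Definition \ref{def_of_germ}, that the intermediate products are genuinely defined and that germ-invertibility of $E_{b_1}$ corresponds to its invertibility in $\mathscr{C}$.
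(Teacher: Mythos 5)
Your proposal is correct and its skeleton is the same as the paper's: both reduce the statement to the claim that, for a cofactor $E_{b_1}$ with $E_a\bullet E_{b_1}$ defined and $E_{b_1}\preccurlyeq_\Upsilon E_b$, the failure of $E_a\gtrapprox E_{\mathrm{NF}(ab_1)}$ is equivalent to invertibility of $E_{b_1}$, and both observe that since $E_a\preccurlyeq_\Upsilon E_{\mathrm{NF}(ab_1)}$ is automatic this amounts to $E_{\mathrm{NF}(ab_1)}\preccurlyeq_\Upsilon E_a$. The one place you diverge is the converse half of your Key Lemma: you cancel in the ambient category $\mathscr{C}$, invoking left-cancellativity of $\mathscr{C}$, whereas the paper never leaves the germ. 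It writes $E_a=(E_a\bullet E_c)\bullet E_{c'}$, uses left-associativity (axiom (4)) to see that $E_c\bullet E_{c'}$ is \emph{defined in $\Upsilon$}, then axiom (3) and left-cancellativity \emph{of the germ} to get $E_c\bullet E_{c'}=\mathbf{1}$, and repeats the trick on $E_cE_{c'}E_c$ to get $E_{c'}\bullet E_c=\mathbf{1}$. This matters for two reasons. First, the standing hypotheses (inherited from Definition \ref{reducible_elements}) are that $\Upsilon$ is left-associative and left-cancellative; left-cancellativity of $\mathscr{C}$ itself is not among the stated assumptions of Construction \ref{the_main_construction}, so the paper's intrinsic argument is the one licensed by the hypotheses. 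Second, the conclusion you need is invertibility \emph{in $\Upsilon$}, i.e.\ that $E_{b_1}\bullet v$ and $v\bullet E_{b_1}$ are defined as germ products and equal identities; an inverse in $\mathscr{C}$ does not give this for free. You correctly flag exactly this as the delicate point and name axioms (3) and (4) as the tools, and carrying that check out is precisely the paper's proof --- so your plan closes, but the ``shortcut'' through $\mathscr{C}$ buys you nothing: the definedness verification you deferred is the whole content of the step.
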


\begin{proof}
 By Definition \ref{def_of_I&J},
 \[
  \mathcal{J}_\Upsilon(E_a,E_b): = \{E_c \in \Upsilon\, |\, E_a \bullet E_c \in \Upsilon, \, E_b = E_c \bullet E_d \mbox{ for some $E_d\in \Upsilon$} \}.
 \]
 
 We assume that $E_c \in \mathcal{J}_\Upsilon(E_a,E_b)$, $E_b = E_c\bullet E_d$ with $E_a \bullet E_c, E_c \bullet E_d \in \Upsilon$.
 
 (1) Let $E_aE_b$ be irreducible. By, $E_b = E_c \bullet E_d$, $E_a\bullet E_c \in \Upsilon$, and Definition \ref{reducible_elements}, $E_a$, $E_{\mathrm{NF}(ac)}$ must be not $\gtrapprox$-comparable. It follows that $E_{\mathrm{NF}(ac)}\preccurlyeq E_a$ because of $E_a \preccurlyeq E_{\mathrm{NF}(ac)}$. Hence $a = acc'$ with $E_{ac} \cdot E_{c'} \in \Upsilon.$ We have $E_a = E_{\mathrm{NF}(ac)} E_{c'} = (E_a E_c )E_{c'}$ and by left-associativity of $\Upsilon$, $E_c\bullet E_{c'} \in \Upsilon$. Hence, $E_a = E_{\mathrm{NF}(acc')} = E_a  (E_{c}E_{c'})$ and by left-concelativity of $\Upsilon$, $E_cE_{c'} = \mathbf{1}_{\mathfrak{s}(c)}.$
 
 Next, $E_aE_c = (E_{\mathrm{NF}(ac)}\bullet E_{c'})\bullet E_c$ an by left-associativity of $\Upsilon$, $E_{c'}E_c \in \Upsilon.$ Let us consider the element $E_cE_{c'}E_c$. We have $E_cE_{c'}E_c = (E_cE_{c'})E_c = E_c$ on the other have $E_cE_{c'}E_c = E_c (E_{c'}E_c)$ and by the left-cancelativity of $\Upsilon$, $E_{c'}E_c =  \mathbf{1}_{\mathfrak{s}(c')}$. Thus $\mathcal{J}_\Upsilon(E_a, E_b) \subseteq \Upsilon^\times$ as claimed.
 
 (2) Let $\mathcal{J}_\Upsilon(E_a,E_b) \subseteq \Upsilon^\times,$ for some element $E_aE_b \in \Upsilon^{[2]}$. Suppose that $E_aE_b \to E_{\mathrm{NF}(ac)}E_{d}$ where $E_{\mathrm{NF}(ac)}E_{d}$ is irreducible. It is clear $E_c \in \mathcal{J}_\Upsilon(E_a,E_b)$ and thus by assumption $E_c$ is invertible. Therefore $E_{\mathrm{NF}(ac)} \preccurlyeq E_a$ i.e,. $E_a \not\gtrapprox E_{\mathrm{NF}(ac)}$. This completes the proof. 
\end{proof}

\begin{lemma}
 Let $\thickapprox$ be a deformation of elements of $\mathsf{Cat}\langle\Upsilon\rangle$ by invertible elements. Then $\thickapprox$ is $\overline{S}_\Upsilon(\mathfrak{B})$-admissible.
\end{lemma}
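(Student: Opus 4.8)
The plan is to prove that irreducibility under $\overline{S}_\Upsilon(\mathfrak{B})$ is invariant along the deformation relation $\thickapprox$; because $\thickapprox$ is symmetric, this single statement delivers both clauses of Definition \ref{S-admissible} at once (clause (2) is precisely ``$\mathrm{Irr}(\overline{S}_\Upsilon(\mathfrak{B}))$ is $\thickapprox$-closed'', and clause (1) is its contrapositive obtained by interchanging the two deformed paths). First I would \emph{localize}: every left-hand side of a reduction in $\overline{S}_\Upsilon(\mathfrak{B})$ is a length-two path $E_xE_y$, and every reducible pair (in the sense of Definition \ref{reducible_elements}) admits a greedy decomposition $y=y_1y_2$ whose one-step reduct is already irreducible, so that pair is itself a left-hand side of $\overline{S}_\Upsilon(\mathfrak{B})$. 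Hence $\mathrm{Irr}(\overline{S}_\Upsilon(\mathfrak{B}))=\mathrm{Irr}(S_\Upsilon(\mathfrak{B}))$ and both consist exactly of the paths $E_{a_1}\cdots E_{a_n}$ none of whose factors $E_{a_i}E_{a_{i+1}}$ is reducible. I would also record that a pair $E_uE_v$ with $E_v$ invertible is always irreducible: any right divisor $v_1$ of an invertible $v$ is invertible, so $E_{\mathrm{NF}(uv_1)}\preccurlyeq E_u$ and the strict relation $E_u\gtrapprox E_{\mathrm{NF}(uv_1)}$ of Definition \ref{order} fails. Since in Definition \ref{def_of_deformation} the endpoints are anchored by $\epsilon_0=\epsilon_\ell=\mathbf{1}$, any extra entries created when the two paths have different lengths are invertible and sit at the right end; by the previous remark they never form a reducible factor and may be ignored.

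The core step is the transport of $\mathcal{J}_\Upsilon$ across a single connecting invertible. By Proposition \ref{J=invert}, a pair $E_aE_b$ is irreducible if and only if $\mathcal{J}_\Upsilon(E_a,E_b)\subseteq\Upsilon^\times$. A deformation relates $E_aE_b$ to a pair $E_{a'}E_{b'}$ with $a'=a\epsilon$ and $b'=\epsilon^{-1}b$ for an invertible $\epsilon$ (up to harmless invertible multiplications on the outer ends coming from the neighbouring $\epsilon_{i-1}$ and $\epsilon_{i+1}$). I claim that $E_c\mapsto E_{\epsilon^{-1}c}$ carries $\mathcal{J}_\Upsilon(E_a,E_b)$ bijectively onto $\mathcal{J}_\Upsilon(E_{a'},E_{b'})$. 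Indeed, if $E_a\bullet E_c\in\Upsilon$ and $E_b=E_c\bullet E_d$, then $E_{b'}=E_{\epsilon^{-1}b}=E_{\epsilon^{-1}c}\bullet E_d$ and $E_{a'}\bullet E_{\epsilon^{-1}c}=E_{a\epsilon\epsilon^{-1}c}=E_a\bullet E_c\in\Upsilon$, the inverse map being $E_{c'}\mapsto E_{\epsilon c'}$. As $\epsilon^{-1}c$ is invertible exactly when $c$ is, this bijection preserves membership in $\Upsilon^\times$, so $\mathcal{J}_\Upsilon(E_a,E_b)\subseteq\Upsilon^\times$ iff $\mathcal{J}_\Upsilon(E_{a'},E_{b'})\subseteq\Upsilon^\times$. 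Factor by factor this shows that $W\thickapprox W'$ forces reducible factors to occur in corresponding positions of $W$ and $W'$, whence $W\in\mathrm{Irr}(\overline{S}_\Upsilon(\mathfrak{B}))$ iff $W'\in\mathrm{Irr}(\overline{S}_\Upsilon(\mathfrak{B}))$.

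The step I expect to be the genuine obstacle is the germ-theoretic bookkeeping hidden in the transport map: one must verify that $E_{\epsilon^{-1}c}$ (and the outer-multiplied letters) really lie in $\Upsilon$ and that the displayed $\bullet$-products are defined, that is, that $\Upsilon$ is stable under multiplication by the invertibles of $\mathsf{Cat}\langle\Upsilon\rangle$ in the sense required by Construction \ref{the_main_construction}. This is exactly where one uses that the connecting elements belong to $\Upsilon^\times$ and that a normal form is changed only by an invertible factor under such multiplication. Once this closure is in hand, the invertibility-preservation and the localization above are routine, and the passage from $S_\Upsilon(\mathfrak{B})$ to $\overline{S}_\Upsilon(\mathfrak{B})$ (the extra requirement that the one-step reduct be irreducible) transports by the very same invariance applied to the shorter reduct. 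This completes the argument that $\thickapprox$ is $\overline{S}_\Upsilon(\mathfrak{B})$-admissible.
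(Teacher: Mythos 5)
Your proposal is correct and follows essentially the same route as the paper: both arguments hinge on Proposition \ref{J=invert} together with pushing the connecting invertible $E_e$ through the germ product via left-associativity and axiom (3) of Definition \ref{def_of_germ}, which is exactly the ``bookkeeping'' you flag and which the paper carries out in a few lines. The only cosmetic difference is that the paper treats reducibility by explicitly constructing the transported reduction $E_{a'}E_{b'}\to E_{\mathrm{NF}(a'eb_1)}E_{b_2}$ and reserves the $\mathcal{J}$-criterion for the irreducible case, whereas you handle both clauses uniformly by transporting $\mathcal{J}_\Upsilon(E_a,E_b)$ bijectively and invoking the symmetry of $\thickapprox$.
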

\begin{proof}
We have to prove that if whenever $E_aE_b$ is reducible (\textit{resp.} irreducible) then any $E_{a'}E_{b'} \thickapprox E_aE_b$ is so.

By $E_aE_b \thickapprox E_{a'}E_{b'}$, $E_a = E_{a'}E_e$, $E_{b'} = E_bE_e$, where $E_e \in \Upsilon^\times$,

\[
 \xymatrix{
  \cdot \ar@{->}[r]^{E_{a'}} \ar@{=}[d] & \cdot \ar@{->}[r]^{E_{b'}} \ar@/_/[d]_{E_e} & \cdot \\
 \cdot \ar@{->}[r]_{E_a} & \cdot \ar@{->}[r]_{E_b} \ar@/_/[u]_{E_{e}^{-1}} & \cdot \ar@{=}[u]
 }
\]

(1) Let $E_aE_b$ be reducible, say $\mathfrak{r}_S: E_aE_b \to E_{\mathrm{NF}(ab_1)}E_{b_2}$. $\Upsilon \ni E_a\bullet E_{b_1} = (E_{a'}E_e)E_{b_1}$, by left-associativity of $\Upsilon$, $E_eE_{b_1} \in \Upsilon$. We thus have $E_{a'}\bullet E_e, E_{e}\bullet E_{b_1}, (E_{a'}\bullet E_e)\bullet E_{b_1} \in \Upsilon$. Hence, by Definition \ref{def_of_germ} (3), $E_{a'}\bullet( E_e\bullet E_{b_1}) = (E_{a'}\bullet E_e)\bullet E_{b_1} \in \Upsilon$, i.e,. $E_{a'}\bullet E_{\mathrm{NF}(eb_1)} \in \Upsilon$. Therefore, by $E_{b'}= E_{e}E_b = E_eE_{b_1}E_{b_2} = E_{\mathrm{NF}(eb_1)}E_{b_2}$, there is a reduction $\mathfrak{r}: E_{a'}E_{b'} \to E_{\mathrm{NF}(a'eb_1)}E_{b_2}$, i.e., $E_{a'}E_{b'}$ is reducible.

(2) Let $E_aE_b$ be irreducible. Then the statement immediately follows from the Proposition \ref{J=invert} and Definition \ref{def_of_deformation}, because of $E_aE_b = E_{a'}E_{b'}$ with $E_a= E_{a'}E_e$ and $E_b = E_{e}^{-1}E_{b'}$. 
\end{proof}

\begin{remark}
 Let us consider a germ $\Upsilon = \Upsilon_E(\mathscr{C},\mathscr{A},P,\mathfrak{B})$. It seems that we also have to add a set of reduction of the form $E_xE_x^{-1}=\mathbf{1}_{\mathrm{s}(x)}$ but it equivalences to $E_xE_x^{-1} \thickapprox \mathbf{1}_{\mathfrak{s}(x)}\cdot \mathbf{1}_{\mathfrak{s}(x)}$, indeed, 
\[
 \xymatrix{
  \cdot \ar@{->}[r]^{E_{x}} \ar@{=}[d] & \cdot \ar@{->}[r]^{E_{x}^{-1}} \ar@/_/[d]_{E_x^{-1}} & \cdot \\
 \cdot \ar@{->}[r]_{\mathbf{1}_{\mathfrak{s}(x)}} & \cdot \ar@{->}[r]_{\mathbf{1}_{\mathfrak{s}(x)}} \ar@/_/[u]_{E_{x}} & \cdot \ar@{=}[u]
 }
\]
thus according to Remark \ref{rem_of_empty_GSB} we do not do that. However, it is useful to consider an ambiguity of form $(E_a,E_b,E_c)$ with $E_b = E_{c}^{-1}$ and $\{E_aE_b \to E_{\mathrm{NF}(ab_1)}E_{b_2}\} \in S_\Upsilon(\mathfrak{B})$. First of all let us note that ${E_{b_1}} = E_{\mathrm{NF}(b_2c)}$. Indeed, we have $\Upsilon \ni \mathbf{1}_{\mathfrak{s}(b)} = E_b \bullet E_c = (E_{b_1}\bullet E_{b_2})\bullet E_c$, hence, by left left-associativity of $\Upsilon$, $E_{b_2}\bullet E_c \in \Upsilon$. Next, we have $E_b = E_{b_1} \bullet E_{b_2}, E_{b_2}\bullet E_c, E_b\bullet E_c = (E_{b_1}\bullet E_{b_2})\bullet E_c = \mathbf{1}_{\mathfrak{s}(b)} \in \Upsilon$ then by Definition \ref{def_of_germ} (3), $\mathbf{1}_{\mathfrak{s}(b)} = (E_{b_1}\bullet E_{b_2})\bullet E_c = E_{b_1}\bullet (E_{b_2}\bullet E_c)$, \textit{i.e.,} $E_{b_1} = E_{\mathrm{NF}(b_2c)}^{-1}$, as claimed. Thus we have 
\[
 \xymatrix{
  \cdot \ar@{->}[r]^{E_{a}} \ar@{=}[d] & \cdot \ar@{->}[rr]^{\mathbf{1}_{\mathfrak{s}(b_1)}} \ar@/_/[d]_{E_{b_1}} && \cdot \\
 \cdot \ar@{->}[r]_{E_{\mathrm{NF}(ab_1)}} & \cdot \ar@{->}[rr]_{E_{\mathrm{NF}(b_2c)}} \ar@/_/[u]_{E_{\mathrm{NF}(b_2c)}} && \cdot \ar@{=}[u]
 }
\]
\textit{i.e.,} $E_a\cdot \mathbf{1}_{\mathfrak{s}(b)} \thickapprox E_{\mathrm{NF}(ab_1)} \cdot E_{\mathrm{NF}(b_2c)}$, because of $\mathfrak{s}(b) = \mathfrak{s}(b_1)$. On the other hand, the ambiguity $(E_a,E_b,E_c)$ gives
\[
 \xymatrix{
 & E_aE_bE_c \ar@{->}[rd] \ar@{->}[ld] & \\
E_{\mathrm{NF}(ab_1)}E_{b_2}E_c \ar@{->}[d] && E_a \mathbf{1}_{\mathfrak{s}(b)} \ar@{=}[d] \\
E_{\mathrm{NF}(ab_1)} E_{\mathrm{NF}(b_2c)} && E_a \mathbf{1}_{\mathfrak{s}(b)}
 }
\]
\textit{i.e.,} according to the $\thickapprox$-version of the Buchberger--Shirshov algorithm (see Remark \ref{Buch-Shirsh}) we get a new reduction $E_{\mathrm{NF}(ab_1)} E_{\mathrm{NF}(b_2c)} \to E_a \mathbf{1}_{\mathfrak{s}(b)}$ which is equivalent to $E_{\mathrm{NF}(ab_1)} E_{\mathrm{NF}(b_2c)} \thickapprox E_a \mathbf{1}_{\mathfrak{s}(b)}.$
 \begin{flushright}
 $\square$
 \end{flushright}
\end{remark}

\begin{lemma}\label{Y=Garside<=>total_pre_order}
Let $\Upsilon = \Upsilon_E(\mathscr{C},\mathscr{A},P,\mathfrak{B})$ be a left-associative and left-cancellative germ, then $\Upsilon$ is a Garside germ if and only if a set $\mathcal{I}(E_x, E_y)$ is totally preordered by $\preccurlyeq$.
\end{lemma}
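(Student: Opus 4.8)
The plan is to play the statement off against Proposition \ref{prop VI,2.28}: since $\Upsilon$ is assumed to be left-associative and left-cancellative, it is a Garside germ precisely when it admits a greatest $\mathcal{I}$-function, and by Definition \ref{def_of_I&J} the latter amounts to asking that each set $\mathcal{I}_\Upsilon(E_x,E_y)$ possess a $\preccurlyeq$-greatest element, that is, an element that every member of $\mathcal{I}_\Upsilon(E_x,E_y)$ left-divides. So the whole lemma reduces to the assertion that, for germs of the form $\Upsilon_E(\mathscr{C},\mathscr{A},P,\mathfrak{B})$, the set $\mathcal{I}_\Upsilon(E_x,E_y)$ has a greatest element if and only if it is totally preordered by $\preccurlyeq$, and I would treat the two implications separately.

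For the direction ``totally preordered $\Longrightarrow$ Garside'' I would first observe that every member of $\mathcal{I}_\Upsilon(E_x,E_y)$ has the form $E_x\bullet E_\gamma$ with $E_\gamma\preccurlyeq_\Upsilon E_y$, hence is an element $E_a$ with $a\in\mathfrak{B}$ satisfying $E_x\preccurlyeq E_a\preccurlyeq E_{\mathrm{NF}(xy)}$. Because proper left-division strictly increases length in the free category presenting $\mathsf{Cat}\langle\Upsilon\rangle$, the lengths of these $E_a$ are bounded by $\ell(\mathrm{NF}(xy))$, so every $\preccurlyeq$-chain inside $\mathcal{I}_\Upsilon(E_x,E_y)$ is finite. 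A finite totally preordered set has a maximal element, and by totality this maximal element is in fact a greatest element; taking it as the value $\mathfrak{I}(E_x,E_y)$ defines a greatest $\mathcal{I}$-function, and Garsideness follows from Proposition \ref{prop VI,2.28}.

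For the converse I would start from the greatest $\mathcal{I}$-function $\mathfrak{I}$ furnished by Proposition \ref{prop VI,2.28}, which gives each $\mathcal{I}_\Upsilon(E_x,E_y)$ a common $\preccurlyeq$-maximum $m=\mathfrak{I}(E_x,E_y)$, and then try to upgrade ``bounded above by $m$'' to ``pairwise comparable''. The useful reduction is that, writing two members as $\delta_1=E_x\bullet E_{\gamma_1}$ and $\delta_2=E_x\bullet E_{\gamma_2}$, left-cancellativity of $\Upsilon$ yields $\delta_1\preccurlyeq\delta_2$ if and only if $E_{\gamma_1}\preccurlyeq_\Upsilon E_{\gamma_2}$ (and symmetrically), so total preordering of $\mathcal{I}_\Upsilon(E_x,E_y)$ is equivalent to comparability of the relevant left-divisors $E_{\gamma_i}\preccurlyeq_\Upsilon E_y$. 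Here I would feed in $\delta_i\preccurlyeq m$ together with left-associativity and the greedy description of $\mathcal{I}_\Upsilon$ from Proposition \ref{greedy} to extract the desired comparison between $E_{\gamma_1}$ and $E_{\gamma_2}$.

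I expect this converse to be the main obstacle, and a delicate one. The mere existence of a $\preccurlyeq$-greatest element of $\mathcal{I}_\Upsilon(E_x,E_y)$ does not by itself force the whole set to be a chain, so the argument cannot rest on an abstract lattice property and must instead exploit the special shape of these germs: that the entries $E_\gamma$ range over left-divisors of a \emph{single} simple $E_y$ and that $\bullet$ is the partial product induced by $\mathrm{NF}$. I would therefore concentrate the effort on showing that, in this concrete setting, two heads $E_x\bullet E_{\gamma_1}$ and $E_x\bullet E_{\gamma_2}$ lying under the same greatest head are automatically nested, and I would check the resulting claim carefully against the running germs (the squarefree-word example and the $B_3^+$ germ) before committing to it, since it is exactly this totality conclusion that is most easily overstated.
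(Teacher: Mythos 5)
Your reduction of the lemma, via Proposition \ref{prop VI,2.28}, to the statement ``$\mathcal{I}_\Upsilon(E_x,E_y)$ has a $\preccurlyeq$-greatest element if and only if it is totally preordered'' is the right frame, but the proposal as written has a genuine gap: the converse implication is never proved. You correctly observe that a greatest element does not force total preordering in an abstract preordered set and that something specific to these germs must be exploited, but you then stop at the level of a plan (``I would concentrate the effort on showing\dots and I would check the resulting claim carefully\dots before committing to it''). The missing idea, which is exactly what the paper uses, is to bring in the $\mathcal{I}$-law of Theorem \ref{RecGar_germ}: two elements of $\mathcal{I}(E_a,E_b)$ arise from two factorizations of $E_b$, and by embedding these into a length-three path $E_aE_bE_c$ and evaluating $\mathfrak{I}(E_a\bullet E_b,E_c)=\mathfrak{I}(E_a,\mathfrak{I}(E_b,E_c))$ in the two possible orders, the paper forces the two heads to agree (up to invertible elements), which is what yields the comparability $c_3=c_1d_2$, $c_2=d_2c_4$ in its diagram. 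Nothing in your sketch produces this comparison, and without it the hard half of the equivalence is open.

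Two smaller points on the forward direction. The paper does not select an abstract maximal element of a finite chain; it takes as candidate the head $E_{\mathrm{NF}(ab_1)}$ obtained by reducing $E_aE_b$ to an irreducible element under $\overline{S}_\Upsilon(\mathfrak{B})$, uses totality only to split into the two cases $E_c\preccurlyeq E_{\mathrm{NF}(ab_1)}$ and $E_{\mathrm{NF}(ab_1)}\preccurlyeq E_c$, and in the second case invokes Proposition \ref{J=invert} to show the discrepancy $E_d$ is invertible, so $E_c=^\times E_{\mathrm{NF}(ab_1)}$ and $E_c\preccurlyeq E_{\mathrm{NF}(ab_1)}$ anyway. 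Your alternative route through ``proper left-division strictly increases length, hence all chains are finite'' is not sound in the generality of the lemma: when $\mathscr{C}^\times$ is nontrivial, $\preccurlyeq$ is only a preorder, length need not strictly increase along $\prec$, and boundedness of length does not give finiteness of $\mathcal{I}_\Upsilon(E_x,E_y)$ when $\mathfrak{B}$ is infinite. If you keep your route you need an explicit chain condition, or better, replace it by the paper's reduction-system argument.
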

\begin{proof}
(1) Let $\mathcal{I}(E_x, E_y)$ be a totally preordered set by $\preccurlyeq$. Define a map $\mathfrak{I}: \Upsilon^{[2]} \to \Upsilon$ as follows
\begin{equation}\label{I(Ea,Eb)}
    \mathfrak{I}(E_a,E_b) : = \begin{cases}
    E_{\mathrm{NF}(ab_1)}, & \mbox{if $E_aE_b \to E_{\mathrm{NF}(ab_1)}E_{b_2}$, where $E_{\mathrm{NF}(ab_1)}E_{b_2}$ is irreducible,} \\
    E_a, & \mbox{in otherwise.}
   \end{cases}    
\end{equation}

Let $E_c \in \mathcal{I}(E_a,E_b)$, then $E_c = E_{\mathrm{NF}(ab')}$ where $E_b = E_{b'}E_{b''}$. Then either $E_c \preccurlyeq E_{\mathrm{NF}(ab_1)}$ or $E_{\mathrm{NF}(ab_1)} \preccurlyeq E_c$.

Let $E_{\mathrm{NF}(ab_1)} \preccurlyeq E_c$, say $E_c = E_{\mathrm{NF}(ab_1)} E_{d}$. Since $E_c = E_{\mathrm{NF}(ab')}$ then $E_{a}E_{b'} = E_{a}E_{b_1}E_d$, by left-cancelativity of $\Upsilon$, $E_{b'} = E_{b_1}E_d$. Next, we have $E_b = E_{b_1}E_{b_2} = E_{b'}E_{b''}$ then $E_{b_2} = E_dE_{b''}$. Since $E_{\mathrm{NF}(ab_1)}E_{b_2}$ is assumed to be irreducible then by Proposition \ref{J=invert}, $E_d$ is an invertible element, because of $E_{\mathrm{NF}(ab_1)}E_{b_2} = E_{\mathrm{NF}(ab_1)}E_dE_{b''}$ and $\Upsilon \ni E_{\mathrm{NF}(ab')}= E_{\mathrm{NF}(ab_1)}E_d$. It follows that $E_c =^\times E_{\mathrm{NF}(ab_1)}$ and hence $E_c \preccurlyeq E_{\mathrm{NF}(ab_1)}$ for any $E_c \in \mathcal{I}(E_a,E_b)$. Thus $\mathfrak{I}$ is a greatest $\mathcal{I}$-function on $\Upsilon.$ Hence $\Upsilon$ is a Garside germ.

(2) Conversely, let $\Upsilon$ be a Garside germ. We then get a function $\mathfrak{I}$ obeying $\mathcal{I}$-law, say, $\mathfrak{I}$, \textit{i.e.,} we have $\mathfrak{I}(E_a \bullet E_b, E_c) = \mathfrak{I}(E_a,\mathfrak{I}(E_b,E_c))$, where $E_a\bullet E_b \in \Upsilon.$ We have
\[
 \xymatrix{
 & E_aE_bE_c \ar@{->}[rd] \ar@{->}[ld]& \\
 E_{\mathrm{NF}(ab)}E_c \ar@{->}[d] && E_a E_{\mathrm{NF}(bc_3)}E_{c_4} \ar@{->}[d] \\
 E_{\mathrm{NF}(abc_1)}E_{c_2} && E_{\mathrm{NF}(ad_1)}E_{d_2}E_{c_4}
 }
\]
then $E_{\mathrm{NF}(abc_1)} = E_{\mathrm{NF}(ad_1)}$, where $E_{\mathrm{NF}(bc_3)} = E_{d_1}E_{d_2}.$ By the left-associativity of $\Upsilon$, $E_{\mathrm{NF}(bc_1)} = E_{d_1}.$ Next, we have $E_{b}E_{c_3} = E_{d_1}E_{d_2} = E_{b}E_{c_1}E_{d_2}$ then $E_{c_3} = E_{\mathrm{NF}(c_1d_2)}.$ Further, by $E_c = E_{c_1}E_{c_2} = E_{c_3}E_{c_4}$, $E_{c_2} = E_{d_2}E_{c_4}$. We thus get $c_3  = c_1d_2$ and $c_2 = d_2c_4$ and the statement follows.
\end{proof}

\begin{lemma}\label{total_pre-order=>condition}
If for any $E_a,E_b \in \Upsilon$ the set $(\mathcal{I}(E_a,E_b),\preccurlyeq)$ is totally preordered by $\preccurlyeq$ then all elements of $\Upsilon^{[2]}$ are reduction unique up to $\thickapprox$ under $S_\Upsilon(\mathfrak{B})$.

\end{lemma}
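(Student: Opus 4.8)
The plan is to exploit the fact, established in Lemma \ref{Y=Garside<=>total_pre_order}, that the total-preorder hypothesis on $\mathcal{I}_\Upsilon(E_a,E_b)$ is exactly the statement that $\Upsilon$ is a Garside germ, and then to identify the $S_\Upsilon(\mathfrak{B})$-irreducible length-two paths with the $\Upsilon$-normal ones, so that uniqueness up to $\thickapprox$ becomes an instance of Proposition \ref{deform_of_S-greedy}. Recall that reduction-uniqueness up to $\thickapprox$ asks for two things: reduction-finiteness, and that every final (irreducible) reduct of a given $E_aE_b$ coincides up to $\thickapprox$. I would prove these separately, after recording the basic invariant that every reduction $E_xE_y\to E_{\mathrm{NF}(xy_1)}E_{y_2}$ preserves the composite value $E_xE_y=E_{\mathrm{NF}(xy_1)}E_{y_2}$ in $\mathsf{Cat}\langle\Upsilon\rangle$, which is immediate from $E_{\mathrm{NF}(xy_1)}=E_x\bullet E_{y_1}$ and $E_y=E_{y_1}\bullet E_{y_2}$.

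For reduction-finiteness I would track the first entry. A non-trivial step requires $E_x\gtrapprox E_{\mathrm{NF}(xy_1)}$, which by Definition \ref{order} forces $E_{y_1}$ to be non-invertible: were $E_{y_1}$ invertible, then $E_{\mathrm{NF}(xy_1)}=^\times E_x$, so $E_{\mathrm{NF}(xy_1)}\preccurlyeq E_x$, contradicting $\gtrapprox$. Hence the first entry strictly increases for $\preccurlyeq$, equivalently the tail $E_{y_2}$ becomes a proper right-divisor of $E_y$. Using left-associativity to compose the successive head-extensions, all intermediate heads lie in $\mathcal{I}_\Upsilon(E_a,E_b)$, which is totally preordered and, $\Upsilon$ being a Garside germ, admits a greatest element $\mathfrak{I}(E_a,E_b)$ by Proposition \ref{prop VI,2.28}; so the heads form a strictly ascending chain bounded above. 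I would close this by the descending chain condition on proper right-divisors, equivalently by a strict decrease of the Gr\"obner--Shirshov length $\ell(\mathrm{NF}(\cdot))$ of the tail by at least one at each step, using additivity $\ell(\mathrm{NF}(y))=\ell(\mathrm{NF}(y_1))+\ell(\mathrm{NF}(y_2))$ for tight products. This termination is the main technical obstacle: one must guarantee that a strictly $\preccurlyeq$-increasing chain inside $\mathcal{I}_\Upsilon(E_a,E_b)$ cannot be infinite, and this is precisely where a Noetherianity (length-additivity) hypothesis is consumed.

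For uniqueness up to $\thickapprox$ I would first identify the final forms. By Proposition \ref{J=invert}, a length-two path $E_cE_d$ is $S_\Upsilon(\mathfrak{B})$-irreducible if and only if $\mathcal{J}_\Upsilon(E_c,E_d)\subseteq\Upsilon^\times$, which, since $\Upsilon$ is a Garside germ, is by Proposition \ref{greedy} equivalent to $E_c\cdot E_d$ being $\Upsilon$-normal. Consequently, any two final reducts $E_cE_d$ and $E_{c'}E_{d'}$ of $E_aE_b$ are $\Upsilon$-normal decompositions of one and the same element $g=E_aE_b$, by the invariant above. Proposition \ref{deform_of_S-greedy}, applied with $S=\Upsilon$ inside $\mathscr{C}=\mathsf{Cat}\langle\Upsilon\rangle$, then shows these two decompositions are $\mathscr{C}^\times$-deformations of one another, that is $E_cE_d\thickapprox E_{c'}E_{d'}$ in the sense of Definition \ref{def_of_deformation}, which is exactly the required coincidence of final images up to $\thickapprox$.

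Combining the two parts gives that every $E_aE_b\in\Upsilon^{[2]}$ is reduction-unique up to $\thickapprox$ under $S_\Upsilon(\mathfrak{B})$. I expect the termination step to be the delicate one; the confluence part is almost formal once irreducibility is identified with $\Upsilon$-normality, since the entire subtlety of ``uniqueness only up to $\thickapprox$'' is absorbed into Proposition \ref{deform_of_S-greedy}, reflecting that in the presence of non-trivial invertibles a normal decomposition is determined only up to deformation by $\mathscr{C}^\times$.
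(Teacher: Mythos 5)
Your proposal is correct in substance but takes a genuinely different route from the paper. The paper's proof is a hands-on diagram chase: it takes the two heads $E_{w_1}$ and $E_{\mathrm{NF}(y_1d_1)}$ produced by two competing reductions, observes that both lie in $\mathcal{I}(E_x,E_{\mathrm{NF}(yz)})$, uses the total preorder to split into the two cases $E_{w_1}\preccurlyeq E_{\mathrm{NF}(y_1d_1)}$ and $E_{\mathrm{NF}(y_1d_1)}\preccurlyeq E_{w_1}$, and in each case uses left-cancellativity, left-associativity and Proposition \ref{J=invert} to show the connecting element $E_e$ is invertible, whence the two final forms are $\thickapprox$-equivalent. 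You instead route everything through Lemma \ref{Y=Garside<=>total_pre_order} (the hypothesis makes $\Upsilon$ a Garside germ), the identification of $S_\Upsilon(\mathfrak{B})$-irreducible length-two paths with $\Upsilon$-normal ones via Propositions \ref{J=invert} and \ref{greedy}, the invariance of the composite in $\mathsf{Cat}\langle\Upsilon\rangle$ under reductions, and finally Proposition \ref{deform_of_S-greedy}. This is shorter and more conceptual, and it makes transparent why uniqueness can only hold up to $\thickapprox$; the price is that it imports the black box of \cite[A, III, Proposition 1.25]{Dehbook} (and needs $\mathsf{Cat}\langle\Upsilon\rangle$ left-cancellative, which does hold once $\Upsilon$ is a Garside germ), whereas the paper's computation is self-contained at the level of the germ and exhibits the deforming invertibles explicitly inside $\Upsilon^\times$ --- which is the form actually reused in Lemma \ref{(S)=GSB}.

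One caveat on the part you rightly call delicate: your termination argument is not closed. Showing that each nontrivial step strips a non-invertible right factor off the tail, so that the heads form a strictly $\preccurlyeq$-increasing chain in $\mathcal{I}_\Upsilon(E_a,E_b)$ bounded by $\mathfrak{I}(E_a,E_b)$, does not by itself forbid an infinite chain in a general totally preordered set; the length-additivity you invoke to finish is an extra Noetherianity hypothesis that is not part of the lemma's statement. To be fair, the paper's own proof silently omits reduction-finiteness altogether (it only proves the two confluence squares), so you are not worse off than the source here --- but if you present your argument as a complete proof you should either add a finiteness/Noetherianity assumption on $\Upsilon$ or restrict the claim, as the paper implicitly does, to the confluence half of reduction-uniqueness.
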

\begin{proof}
 In other words we have to prove that the following diagrams
 \[
   \xymatrix{
    E_xE_{\mathrm{NF}(yz)} \ar@{->}[r]^{\mathfrak{r}_{x,yz}} \ar@{->}[d]_{\overline{\mathfrak{r}_{x, yz}}} \ar@{->}[dr]^{\overline{\mathfrak{r}_{xy_1,y_2z}}\circ \mathfrak{r}_{x,yz}} & E_{\mathrm{NF}(xy_1)}E_{\mathrm{NF}(y_2z)} \ar@{->}[d]^{\overline{\mathfrak{r}_{xy_1,y_2z}}}  \\
    E_{\mathrm{NF}(xw_1)}E_{w_2} \ar@{->}[r]_{\thickapprox} & E_{\mathrm{NF}(xy_1d_1)}E_{d_2}
   }
   \qquad  \xymatrix{
   E_{y_2}E_{z} \ar@{->}[r]^{{\mathfrak{r}_{y_2,z}}} \ar@{->}[rd]^{\overline{\mathfrak{r}_{y_2z_1,z_2}}\circ \mathfrak{r}_{y_2,z}} \ar@{->}[d]_{\overline{\mathfrak{r}_{y_2,z}}} & E_{\mathrm{NF}(y_2z_1)}E_{z_2} \ar@{->}[d]^{\overline{\mathfrak{r}_{y_2z_1, z_2}}} \\
   E_{\mathrm{NF}(y_2z')}E_{z''} \ar@{->}[r]_{\thickapprox}& E_{\mathrm{NF}(y_2z_1z_{21})}E_{z_{22}}
   }
  \]
 are commutative, where all $\overline{\mathfrak{r}}$ are elements of $\overline{S}_\Upsilon(\mathfrak{B})$.
 
 (1) By the assumption the set $(\mathcal{I}(E_x, E_{\mathrm{NF}(yz)}), \preccurlyeq)$ is totally preordered. We have $E_{w_1}, E_{\mathrm{NF}(y_1d)} \in \mathcal{I}(E_x, E_{\mathrm{NF}(yz)})$.
 
\textit{Case 1.} Let $E_{w_1} \preccurlyeq E_{\mathrm{NF}(y_1d)}$ then $E_{\mathrm{NF}(y_1d)} = E_{w_1}E_e$. Since $E_{\mathrm{NF}(yz)}$, $E_{\mathrm{NF}(y_2z)}$ have been assumed to be equal to $E_{w_1}E_{w_2}$ and $E_{d_1}E_{d_2}$ respectively then we get 
 \begin{eqnarray*}
  E_{w_1}E_{w_2} &=& E_{\mathrm{NF}(y_1d)} = E_{y_1}E_{y_2}E_{z} = E_{y_1}E_{\mathrm{NF}(y_2)z} \\
  &=& E_{y_1}E_{d_1}E_{d_2} = E_{\mathrm{NF}(y_1d_1)}E_{d_2} = E_{w_1}E_eE_{d_2},
 \end{eqnarray*}
hence, by left-cancellativity of $\Upsilon$, $E_{w_2} = E_eE_{d_2}$, \textit{i.e.,} $E_e \preccurlyeq E_{w_2}$. Further, $\Upsilon \ni E_{\mathrm{NF}(xy_1d_1)} = E_{x}E_{\mathrm{NF}(y_1d_1)} = E_{x}E_{w_1}E_e$. Since $E_{w_1}E_e = E_{\mathrm{NF}(y_1d_1)}$ then $E_{w_1}\bullet E_e \in \Upsilon$. It follows that we get a reduction $E_{\mathrm{NF}(xw_1)}E_{w_2} \to E_{\mathrm{NF}(xw_1e)}E_d$. But $E_{\mathrm{NF}(xw_1)}E_{w_2}$ is assumed to be irreducible then by Proposition \ref{J=invert}, $E_e$ is invertible. We thus obtain
\[
 E_{\mathrm{NF}(xy_1d_1)} = E_xE_{\mathrm{NF}(y_1d_1)} = E_{x}E_{w_1}E_{f} = E_{\mathrm{NF}(xw_1)}E_e,
\]
therefore $E_{\mathrm{NF}(xy_1d_1)} =^\times E_{\mathrm{NF}(xw_1)}$, we thus get
\[
 \xymatrix{
  \cdot \ar@{->}[rr]^{E_{\mathrm{NF}(xw_1)}} \ar@{=}[d] && \cdot \ar@{->}[r]^{E_{w_2}} \ar@/_/[d]_{E_e} & \cdot \\
 \cdot \ar@{->}[rr]_{E_{\mathrm{NF}(xy_1d_1)}} && \cdot \ar@{->}[r]_{E_{d_2}} \ar@/_/[u]_{E_{e}^{-1}} & \cdot \ar@{=}[u]
 }
\]
as claimed.

\textit{Case 2.} Let $E_{\mathrm{NF}(y_1d_1)} \preccurlyeq E_{w_1}$, say $E_{w_1} = E_{\mathrm{NF}(y_1d_1)}E_e.$ By the left-associativity of $\Upsilon$, $E_{d_1}\bullet E_e \in \Upsilon$. Since we have assumed that $E_{\mathrm{NF}(yz)} = E_{w_1}E_{w_2}$, $E_{\mathrm{NF}(y_2z)} = E_{d_1}E_{d_2}$ then
\[
 E_{\mathrm{NF}(yz)} = E_{w_1}E_{w_2} = E_{\mathrm{NF}(y_1d_1)}E_eE_{w_2} = E_{y_1}E_{d_1}E_eE_{w_2}
\]
by $E_y = E_{y_1}E_{y_2}$ and left-associativity of $\Upsilon$, $E_{\mathrm{NF}(y_2z)} = E_{\mathrm{NF}(d_1e)}E_{w_2}$. $E_{\mathrm{NF}(y_2z)} = E_{d_1}E_{d_2}$ implies that $E_{d_2} = E_eE_{w_2}.$ Next, by $E_{\mathrm{NF}(xw_1)} = E_{\mathrm{NF}(xy_1d_1)}E_e$ and $E_{e} \preccurlyeq E_{d_2}$ then there is a reduction $E_{\mathrm{NF}(xy_1d_1)}E_{d_2} \to E_{\mathrm{NF}(xy_1d_1e)}E_{w_2}$. Hence, by Proposition \ref{J=invert}, $E_e$ is invertible because of $E_{\mathrm{NF}(xy_1d_1)}E_{d_2}$ has been assumed to be irreducible. Thus $E_{\mathrm{NF}(xw_1)} =^{\times} E_{\mathrm{NF}(xy_1d_1)}$, we obtain
\[
 \xymatrix{
  \cdot \ar@{->}[rr]^{E_{\mathrm{NF}(xw_1)}} \ar@{=}[d] && \cdot \ar@{->}[r]^{E_{w_2}} \ar@/^/[d]^{E_e^{-1}} & \cdot \\
 \cdot \ar@{->}[rr]_{E_{\mathrm{NF}(xy_1d_1)}} && \cdot \ar@{->}[r]_{E_{d_2}} \ar@/^/[u]^{E_{e}} & \cdot \ar@{=}[u]
 }
\]
and then the statement follows.

(2) We have $E_{z'}, E_{\mathrm{NF}(z_1z_{21})} \in \mathcal{I}(E_{y_2}, E_z)$.

\textit{Case 1.} Let $E_{\mathrm{NF}(z_1z_{21})} \preccurlyeq E_{z'}$, \textit{i.e.,} $E_{z'} = E_{\mathrm{NF}(z_1z_{21})} E_e$, hence by left-cancellativity of $\Upsilon$ and $E_z = E_{z'}E_{z''} = E_{\mathrm{NF}(z_1z_{21})}E_{z_{22}}$, $E_{z_{22}} = E_eE_{z''}$. Since $E_{\mathrm{NF}(y_2z')} = E_{\mathrm{NF}(y_2z_1z_{21}e)}$ then, by Proposition \ref{J=invert}, $E_e \in \Upsilon^\times.$ It implies that 
\[
 \xymatrix{
  \cdot \ar@{->}[rr]^{E_{\mathrm{NF}(y_2z_1z_{21})}} \ar@{=}[d] && \cdot \ar@{->}[r]^{E_{z_{22}}} \ar@/^/[d]^{E_e} & \cdot \\
 \cdot \ar@{->}[rr]_{E_{y_2z'}} && \cdot \ar@{->}[r]_{E_{z''}} \ar@/^/[u]^{E_{e}^{-1}} & \cdot \ar@{=}[u]
 }
\]
and the statement follows.
\end{proof}

\textit{Case 2.} $E_{z'} \preccurlyeq E_{\mathrm{NF}(z_1z_{21})}$, \textit{i.e.,} $E_{\mathrm{NF}(z_1z_{21})} = E_{z'}E_e$, then by the left-associativity of $\Upsilon$ and $E_z = E_{z'}E_{z''} = E_{\mathrm{NF}(z_1z_{21})}E_{z_22}$, $E_{z''} = E_eE_{z_{22}}$. Since $E_{\mathrm{NF}(y_2z_1z_{21})} = E_{\mathrm{NF}(y_2z'e)}$ then by Proposition \ref{J=invert}, $E_e \in \Upsilon^\times$, it implies that 
\[
 \xymatrix{
  \cdot \ar@{->}[rr]^{E_{\mathrm{NF}(y_2z_1z_{21})}} \ar@{=}[d] && \cdot \ar@{->}[r]^{E_{z_{22}}} \ar@/^/[d]^{E_e^{-1}} & \cdot \\
 \cdot \ar@{->}[rr]_{E_{y_2z'}} && \cdot \ar@{->}[r]_{E_{z''}} \ar@/^/[u]^{E_{e}} & \cdot \ar@{=}[u]
 }
\]
and thus the statement follows.

\begin{lemma}\label{(S)=GSB}
If a set $(\mathcal{I}(E_a,E_b),\preccurlyeq)$ is totally preordered for any $E_a,E_b \in \Upsilon$ then all ambiguities of $\overline{S}_\Upsilon(\mathfrak{B})$ are $\thickapprox$-resolvable. 
\end{lemma}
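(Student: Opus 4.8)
The plan is to pin down exactly which ambiguities can occur and then to argue that any two complete reductions of the overlapping word land on $\Upsilon$-normal decompositions of one and the same element of $\mathsf{Cat}\langle\Upsilon\rangle$, whence Proposition \ref{deform_of_S-greedy} forces them to be $\thickapprox$-equivalent.

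First I would observe that every left-hand side of a reduction in $\overline{S}_\Upsilon(\mathfrak{B})$ is a length-two word $E_xE_y$. Since all these words have the same length two, there can be no inclusion ambiguities (an inclusion would require the containing left-hand side $W_\tau=ABC$ with $A,B,C\ne\mathbf{1}$ to have length at least four), and every overlap ambiguity must have the form $(\sigma,\tau,E_x,E_y,E_z)$ arising from a length-three word $E_xE_yE_z$ in which both $E_xE_y$ and $E_yE_z$ are reducible. Thus it suffices to resolve, up to $\thickapprox$, the two one-step descendants $E_{\mathrm{NF}(xy_1)}E_{y_2}\,E_z$ and $E_x\,E_{\mathrm{NF}(yz_1)}E_{z_2}$.

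Next I would record the bridge between irreducibility and greediness. By Proposition \ref{J=invert} a pair $E_aE_b$ is irreducible precisely when $\mathcal{J}_\Upsilon(E_a,E_b)$ consists of invertibles; and since the hypothesis makes $\mathcal{I}_\Upsilon(E_a,E_b)$ totally preordered, Lemma \ref{Y=Garside<=>total_pre_order} shows $\Upsilon$ is a Garside germ, so that $\mathsf{Cat}\langle\Upsilon\rangle$ is left-cancellative, $\Upsilon$ is a Garside family in it, and by Proposition \ref{greedy} a length-two word is irreducible if and only if it is $\Upsilon$-normal. Consequently $E_{a_1}\cdots E_{a_n}\in\mathrm{Irr}(\overline{S}_\Upsilon(\mathfrak{B}))$ exactly when each consecutive pair is $\Upsilon$-normal, i.e.\ when the whole word is a $\Upsilon$-normal decomposition (its entries lying in $\Upsilon\subseteq\Upsilon^\sharp$). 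Moreover each reduction $E_uE_v\to E_{\mathrm{NF}(uv_1)}E_{v_2}$ is a genuine relation in $\mathsf{Cat}\langle\Upsilon\rangle$, since $E_u\bullet E_{v_1}=E_{\mathrm{NF}(uv_1)}$ and $E_v=E_{v_1}E_{v_2}$ there; hence every reduction preserves the represented element. I would then complete both descendants to irreducible words by arbitrary final reduction sequences $\mathfrak{r}$ and $\mathfrak{r}'$ (these terminate, as every element is reduction-finite): the results $\mathfrak{r}(E_{\mathrm{NF}(xy_1)}E_{y_2}E_z)$ and $\mathfrak{r}'(E_xE_{\mathrm{NF}(yz_1)}E_{z_2})$ are both $\Upsilon$-normal decompositions of the single element $E_xE_yE_z$. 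Proposition \ref{deform_of_S-greedy}, applied to the Garside family $\Upsilon$ in the left-cancellative category $\mathsf{Cat}\langle\Upsilon\rangle$, then guarantees that these two decompositions are $\mathscr{C}^\times$-deformations of one another, which is precisely the relation $\thickapprox$ of Definition \ref{def_of_deformation}. Hence every overlap ambiguity is resolvable up to $\thickapprox$, so $\overline{S}_\Upsilon(\mathfrak{B})$ is confluent up to $\thickapprox$.

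The main obstacle I anticipate is making the passage ``irreducible word $=$ $\Upsilon$-normal decomposition'' fully airtight: one must verify that Proposition \ref{greedy}'s adjacent-pair criterion genuinely upgrades to the whole word, and one must handle the possible non-uniqueness of the greedy factorization $y=y_1y_2$, so that a different choice of one-step descendant alters the outcome only by invertible elements. This is exactly the point where the earlier reduction-uniqueness result, Lemma \ref{total_pre-order=>condition}, together with the congruence $\thickapprox$, absorbs the discrepancy and keeps the two completed normal forms $\thickapprox$-related rather than literally equal.
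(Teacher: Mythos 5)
Your proof is correct in substance but follows a genuinely different route from the paper's. The paper stays entirely inside the rewriting system: it takes the overlap $E_aE_bE_c$, applies Lemma \ref{total_pre-order=>condition} to each of the two one-step descendants to get local commutative squares, and then uses left-associativity of the germ to manufacture explicit further reductions $\mathfrak{r}_1,\mathfrak{r}_2,\mathfrak{r}_3$ that drive both branches to one concrete common word $E_{\mathrm{NF}(abf_1)}E_{\mathrm{NF}(f_2c_{21})}E_{c_{22}}$ up to $\thickapprox$; the confluent reduct is exhibited, and only finitely many prescribed steps are used. You instead argue semantically: Lemma \ref{Y=Garside<=>total_pre_order} upgrades the hypothesis to ``$\Upsilon$ is a Garside germ,'' Propositions \ref{J=invert} and \ref{greedy} identify irreducible words with $\Upsilon$-normal decompositions, each reduction is a relation of $\mathsf{Cat}\langle\Upsilon\rangle$ so both completions represent the same element, and Proposition \ref{deform_of_S-greedy} then forces the two normal decompositions to be $\Upsilon^\times$-deformations of one another. (Your observation that no inclusion ambiguities occur, since all left-hand sides have length two, is a nice point the paper leaves implicit.) What your approach buys is brevity and a clear conceptual reason why confluence holds; what it costs is reliance on two points you rightly flag as delicate: (i) termination of arbitrary final reduction sequences on the length-three word, which is not guaranteed by the lemma's hypotheses alone (the ping-pong between positions $(1,2)$ and $(2,3)$ needs an argument or a descending chain condition, whereas the paper's chase resolves the ambiguity in a fixed bounded number of steps), and (ii) the identification of irreducibility under the restricted system $\overline{S}_\Upsilon(\mathfrak{B})$ with irreducibility in the sense of Definition \ref{reducible_elements} (a pair could in principle be reducible yet admit no reduction in $\overline{S}_\Upsilon(\mathfrak{B})$ if no single step lands on an irreducible pair). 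Both points are implicitly assumed elsewhere in the paper as well -- indeed the paper derives the ``$\mathrm{Irr}(\overline{S}_\Upsilon(\mathfrak{B}))=\Upsilon$-normal forms'' statement in Theorem \ref{The_Main_Result} from exactly the propositions you invoke -- so your argument is acceptable, but if you want it to stand on its own you should add a short justification of termination (e.g., via the existence of the greatest $\mathcal{I}$-function, which makes the leftmost entry stabilize after at most one revisit) rather than asserting reduction-finiteness outright.
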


\begin{proof}
Let us consider an ambiguity $(\sigma,\tau, E_{a}, E_b,E_c)$, we have
\[
 \xymatrix{
 & E_aE_bE_c \ar@{->}[rd]^{\mathfrak{r}_{b,c}} \ar@{->}[ld]_{\mathfrak{r}_{a,b}}& \\
E_{\mathrm{NF}(ab_1)}E_{b_2}E_c && E_a E_{\mathrm{NF}(bc_1)}E_{c_2}
 }
\]

Using Lemma \ref{total_pre-order=>condition}, we get the following commutative diagrams
\[
 \xymatrix{
  E_{\mathrm{NF}(ab_1)}E_{b_2}E_c \ar@{->}[d]_{\overline{\mathfrak{r}_{b_2,c}}} \ar@{->}[r]^{\mathfrak{r}_{b_2,c}} & E_{\mathrm{NF}(ab_1)}E_{\mathrm{NF}(b_2c_1)}E_{c_2} \ar@{->}[d]^{\overline{\mathfrak{r}_{b_2c_1,c_2}}}\\
  E_{\mathrm{NF}(ab_1)}E_{\mathrm{NF}(b_2c')}E_{c''} \ar@{->}[r]_{\thickapprox} & E_{\mathrm{NF}(ab_1)}E_{\mathrm{NF}(b_2c_1c_{21})}E_{c_{22}}
 }
\]
and
\[
 \xymatrix{
  E_aE_{\mathrm{NF}(bc_1)}E_{c_2} \ar@{->}[d]_{\overline{\mathfrak{r}_{a,bc_1}}} \ar@{->}[r]^{{\mathfrak{r}_{a,bc_1}}} & E_{\mathrm{NF}(ab_1)}E_{\mathrm{NF}(b_2c_1)}E_{c_2} \ar@{->}[d]^{\overline{\mathfrak{r}_{ab_1,b_2c_1}}} \\
  E_{\mathrm{NF}(ad_1)}E_{d_2}E_{c_2} \ar@{->}[r]_{\thickapprox} & E_{\mathrm{NF}(ab_1f_1)}E_{f_2}E_{c_2}
 }
\]

Let us consider $E_{\mathrm{NF}(ab_1)} E_{\mathrm{NF}(b_2c_1)}E_{c_2}$. Since $E_{\mathrm{NF}(b_2c_1c_{21})} = E_{f_1}E_{f_2}E_{c_{21}}$, $E_{\mathrm{NF}(ab_1)}\bullet E_{f_1} \in \Upsilon$ we then have reductions $\mathfrak{r}_1: E_{f_2}E_{c_2} \to E_{\mathrm{NF}(f_2c_{21})}E_{c_{22}}$, and $\mathfrak{r}_2:E_{\mathrm{NF}(ab_1)}E_{\mathrm{NF}(b_2c_1c_{21})} \to E_{\mathrm{NF}(ab_1f_1)}E_{\mathrm{NF}(f_2c_{21})}$. Next, by left-associativity of $\Upsilon$, $E_{f_2}\bullet E_{c_{21}} \in \Upsilon$ and we get a reduction $\mathfrak{r}_3:E_{f_2}E_{c_2} \to E_{\mathrm{NF}(f_2c_{21})}E_{c_{22}}$. Thus we get
\[
 \xymatrix{
 E_{\mathrm{NF}(ab_1)} E_{\mathrm{NF}(b_2c_1)}E_{c_2}  \ar@{->}[r]^{\overline{\mathfrak{r}_{b_2c_1,c_2}}} \ar@{->}[d]_{\mathfrak{r}_2} & E_{\mathrm{NF}(ab_1)}E_{\mathrm{NF}(b_2c_1c_{21})}E_{c_{22}} \ar@{->}[d]^{\mathfrak{r}_1} \\
 E_{\mathrm{NF}(ab_1f_1)}E_{f_2}E_{c_2} \ar@{->}[r]_{\mathfrak{r}_3} & E_{\mathrm{NF}(ab_1f_1)}E_{\mathrm{NF}(f_2c_{21})}E_{c_{22}}
 }
\]

Therefor we obtain the following commutative diagram up to $\thickapprox$

\begin{equation*}
    \xymatrix{
 &E_aE_bE_c \ar@{->}[rd] \ar@{->}[ld] & \\
 E_{\mathrm{NF}(ab_1)}E_{b_2}E_c \ar@{->}[d] && E_a E_{\mathrm{NF}(bc_1)}E_{c_2} \ar@{->}[d] \\
 E_{\mathrm{NF}(ab_1)}E_{\mathrm{NF}(b_2c')}E_{c''} \ar@{.>}[rd]^{\rightsquigarrow} && E_{\mathrm{NF}(ad_1)}E_{d_2}E_{c_2} \ar@{.>}_{\leftsquigarrow}[ld] \\
 & E_{\mathrm{NF}(abf_1)}E_{\mathrm{NF}(f_2c_{21})}E_{c_{22}} & 
}
\end{equation*}
where the left and right dotted arrows denote the reductions
\begin{eqnarray*}
    E_{\mathrm{NF}(ab_1)}E_{\mathrm{NF}(b_2c')}E_{c''} &\rightsquigarrow& E_{\mathrm{NF}(abf_1)}E_{\mathrm{NF}(f_2c_{21})}E_{c_{22}}, \\
    E_{\mathrm{NF}(ad_1)}E_{d_2}E_{c_2} &\rightsquigarrow& E_{\mathrm{NF}(abf_1)}E_{\mathrm{NF}(f_2c_{21})}E_{c_{22}},
\end{eqnarray*}
respectively. It implies that an ambiguity $(\sigma, \tau, E_a, E_b,E_c)$ is resolvable up to $\thickapprox,$ as claimed.
\end{proof}

\begin{theorem}[{\bf The Main Result}]\label{The_Main_Result}
Let $\mathscr{C},\mathscr{A}$ be categories, $P:\mathscr{C} \to \mathscr{A}$ a surjective functor, $E$ its section, and the corresponding germ $\Upsilon = \Upsilon_E(\mathscr{C},\mathscr{A},P,\mathfrak{B})$ (see Construction \ref{the_main_construction}) left-cancelative and left-associative.

Let $\thickapprox$ be a deformation of paths by invertible elements, and consider the correspond set $\overline{S}_\Upsilon(\mathfrak{B})$ of reductions (see Construction \ref{reduction_system}).

The germ $\Upsilon$ is a Garside germ if and only if all ambiguities of the set $\overline{S}_\Upsilon(\mathfrak{B})$ are resolvable up to $\thickapprox.$

Moreover, the set of all irreducible elements $\mathrm{Irr}(\overline{S}_\Upsilon(\mathfrak{B}))$ coincides with the set of all $\Upsilon$-normal elements of the germ $\Upsilon$, and any two $\Upsilon$-normal decompositions of an element of $\mathsf{Cat}\langle \Upsilon \rangle$ are $\thickapprox$-equivalent (i.e., are $\Upsilon^\times$-deformations of one another).

Finally, if $\Upsilon$ is a solid generating family in $\mathscr{C}$ then $\mathscr{C} \cong \mathsf{Cat}\langle \Upsilon \rangle$.

\end{theorem}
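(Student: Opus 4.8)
The plan is to prove the biconditional first, since the three supplementary assertions all read off quickly once $\Upsilon$ is known to be a Garside germ. I would split ``$\Upsilon$ is a Garside germ $\iff$ all ambiguities of $\overline{S}_\Upsilon(\mathfrak{B})$ are $\thickapprox$-resolvable'' into its two implications, recording that $\Upsilon$ is left-associative and left-cancellative by hypothesis, so those standing conditions in Propositions~\ref{prop VI,2.28} and~\ref{RecGar_germ} are free throughout.

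For the forward implication, suppose $\Upsilon$ is a Garside germ. Then Lemma~\ref{Y=Garside<=>total_pre_order} makes $\mathcal{I}(E_x,E_y)$ totally preordered by $\preccurlyeq$ for every $E_x,E_y\in\Upsilon$, and Lemma~\ref{(S)=GSB} converts this total preorder directly into the resolvability up to $\thickapprox$ of all ambiguities of $\overline{S}_\Upsilon(\mathfrak{B})$. For the backward implication I would go through the Composition--Diamond machinery: assuming all ambiguities resolvable, Theorem~\ref{CD-lemma} renders every element of $\mathsf{Cat}\langle\Upsilon\rangle$ reduction-unique up to $\thickapprox$, so the greedy head produced by the final reduction of a length-two path,
\[
 \mathfrak{I}(E_a,E_b):=E_{\mathrm{NF}(ab_1)}\quad\text{where } E_aE_b\to E_{\mathrm{NF}(ab_1)}E_{b_2}\in\mathrm{Irr}(\overline{S}_\Upsilon(\mathfrak{B})),
\]
is well-defined up to $=^\times$. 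I would then verify that $\mathfrak{I}$ is a greatest $\mathcal{I}$-function in the sense of Definition~\ref{def_of_I&J}: the presentation $\mathfrak{I}(E_a,E_b)=E_a\bullet E_{b_1}$ with $E_{b_1}\preccurlyeq E_b$ is immediate, and for an arbitrary $\delta=E_{\mathrm{NF}(a\gamma)}\in\mathcal{I}(E_a,E_b)$ with $\gamma\preccurlyeq_\Upsilon b$ I would feed $\delta$ in as the head of a reduction $E_aE_b\to E_{\mathrm{NF}(a\gamma)}E_{\gamma'}$ (or note $\delta=^\times E_a$ when Definition~\ref{reducible_elements} offers no reduction) and let confluence carry this to the greedy form, so that $\delta\preccurlyeq\mathfrak{I}(E_a,E_b)$. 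Proposition~\ref{prop VI,2.28} (equivalently, checking the sharp $\mathcal{I}$-law and invoking Theorem~\ref{RecGar_germ}) then concludes that $\Upsilon$ is a Garside germ.

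The three remaining assertions follow once the germ is Garside. For $\mathrm{Irr}(\overline{S}_\Upsilon(\mathfrak{B}))=\{\Upsilon\text{-normal paths}\}$, I would combine Proposition~\ref{J=invert}, stating that a length-two path $E_aE_b$ is irreducible exactly when $\mathcal{J}_\Upsilon(E_a,E_b)\subseteq\Upsilon^\times$, with the equivalence $(1)\Leftrightarrow(3)$ of Proposition~\ref{greedy}, which identifies that last condition with $\Upsilon$-normality of $E_a\cdot E_b$; since both irreducibility under $\overline{S}_\Upsilon(\mathfrak{B})$ and $\Upsilon$-normality are local conditions on consecutive entries, they agree on paths of every length. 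The $\thickapprox$-equivalence of any two $\Upsilon$-normal decompositions is then Proposition~\ref{deform_of_S-greedy} applied with $\mathscr{C}=\mathsf{Cat}\langle\Upsilon\rangle$ and $S=\Upsilon$, whose $\mathscr{C}^\times$-deformations are precisely the relation $\thickapprox$ (it also falls out of the reduction-uniqueness in Theorem~\ref{CD-lemma}). Finally, if $\Upsilon$ is solid and generating in $\mathscr{C}$, then, $\Upsilon$ being a Garside germ, Proposition~\ref{Garside_germ=cat} (equivalently Theorem~\ref{germ_from_Garside}) yields $\mathscr{C}\cong\mathsf{Cat}\langle\Upsilon\rangle$.

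I expect the genuine obstacle to be the backward implication, and inside it the proof that the greedy $\mathcal{I}$-function $\mathfrak{I}$ is greatest (equivalently, obeys the sharp $\mathcal{I}$-law). The delicate point is the bookkeeping: one must show that the head $E_{\mathrm{NF}(a\gamma)}$ arising from an \emph{arbitrary} left-divisor $\gamma$ of $b$ can only grow under the reductions of $\overline{S}_\Upsilon(\mathfrak{B})$ and is ultimately absorbed into the unique greedy head, splitting and recombining the invertible surplus by left-associativity and left-cancellativity exactly as in the proofs of Lemmas~\ref{total_pre-order=>condition} and~\ref{(S)=GSB}; confluence up to $\thickapprox$ is what guarantees that the limiting head is independent of the chosen divisor. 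Everything else is either a direct citation or a short local diagram chase.
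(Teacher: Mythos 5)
Your proposal follows essentially the same route as the paper's own proof: the forward implication via Lemma~\ref{Y=Garside<=>total_pre_order} feeding into Lemmas~\ref{total_pre-order=>condition} and~\ref{(S)=GSB}, the backward implication by using reduction-uniqueness from Theorem~\ref{CD-lemma} to show that the greedy head defines a greatest $\mathcal{I}$-function (so Proposition~\ref{prop VI,2.28} applies), and the supplementary claims by citing Propositions~\ref{deform_of_S-greedy}, \ref{greedy}, \ref{J=invert} and Theorem~\ref{germ_from_Garside}. This is correct and matches the paper's argument, including the point you flag about absorbing an arbitrary head $E_{\mathrm{NF}(a\gamma)}$ into the greedy one up to an invertible factor.
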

\begin{proof} Let us prove the first statement.

(1) Let $\Upsilon$ be a Garside germ. Then by Lemma \ref{Y=Garside<=>total_pre_order}, the set $\mathcal{I}(E_a,E_b)$ is totally preordered by $\preccurlyeq$ for any $E_a,E_b$, thus, by Lemma \ref{total_pre-order=>condition} and Lemma \ref{(S)=GSB} the statement follows.
 
(2) Conversely, let all ambiguities of the set $\overline{S}_\Upsilon(\mathfrak{B})$ are $\thickapprox$-resolvable. Define a map $\mathfrak{I}: \Upsilon^{[2]} \to \Upsilon$ as follows
\[
 \mathfrak{I}(E_a,E_b) : = \begin{cases}
    E_{\mathrm{NF}(ab_1)}, & \mbox{if $E_aE_b \to E_{\mathrm{NF}(ab_1)}E_{b_2}$ and $E_{\mathrm{NF}(ab_1)}E_{b_2}$ is irreducible,} \\
    E_a, & \mbox{in otherwise.}
   \end{cases}
\]
  
Let $E_c \in \mathcal{I}(E_a,E_b)$, then $E_c = E_{\mathrm{NF}(ab')}$ where $E_b = E_{b'}E_{b''}$. Consider $E_aE_b$, by assumption, this element is reduction-unique up to $\thickapprox$ under $\overline{S}_\Upsilon(\mathfrak{B})$, \textit{i.e.,} we have the following commutative diagram
\[
  \xymatrix{
    E_aE_{b} \ar@{->}[r]^{\mathfrak{r}_{a,b}} \ar@{->}[d]_{\overline{\mathfrak{r}_{a, b}}} & E_{\mathrm{NF}(ab')}E_{b''} \ar@{->}[d]^{\overline{\mathfrak{r}_{ab',b''}}}  \\
    E_{\mathrm{NF}(ab_1)}E_{b_2} \ar@{->}[r]_{\thickapprox} & E_{\mathrm{NF}(ab'd_1)}E_{d_2}
   }
\]
where $E_{\mathrm{NF}(ab'd_1)}E_{d_2}$ is assumed to be irreducible, $E_{b''} = E_{d_1}E_{d_2}$, $E_{\mathrm{NF}(ab')}\bullet E_{d_2} \in \Upsilon$. Thus we have $E_{\mathrm{NF}(ab_1)} =  E_{\mathrm{NF}(ab'd_1)} E_{e}$, for some $E_e \in \Upsilon^\times.$ We have $E_{\mathrm{NF}(ab'd_1)} E_{e} = E_{\mathrm{NF}(ab')}E_{d_1}E_e$, i.e,. $E_{\mathrm{NF}(ab_1)} = E_{\mathrm{NF}(ab')}E_{d_1}E_e$, hence $E_{\mathrm{NF}(ab')} \preccurlyeq E_{\mathrm{NF}(ab_1)}$. Thus $\mathfrak{I}$ is a greatest $\mathcal{I}$-function on $\Upsilon.$ Hence $\Upsilon$ is a Garside germ.

The last statement immediately follows from Proposition \ref{deform_of_S-greedy}, Proposition \ref{greedy} (3), Proposition \ref{J=invert}, and Theorem \ref{germ_from_Garside}.
\end{proof}

\begin{corollary}\label{GSB=greedy}
 If a preorder $\gtrapprox$ has a descending chain condition then $\Upsilon = \Upsilon_E(\mathscr{C},\mathscr{A},P,\mathfrak{B})$ is a Garside germ if and only if the corresponding set of polynomials $(\overline{S}_\Upsilon(\mathfrak{B})) = \{E_xE_y-E_{\mathrm{NF}(xy_1)}E_{y_2} \}$, where $E_x,E_y$ run over all generators of the $\Upsilon$, and whenever all $E_{\mathrm{NF}(xy_1)}E_{y_2}$ is irreducible, is a $\thickapprox$-closed Gr\"obner--Shirshov basis relative to $\gtrapprox$ (see Definition \ref{order}) and then the corresponding Gr\"obner--Shirshov normal form is exactly the $\Upsilon$-normal form.
\end{corollary}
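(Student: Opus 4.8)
The plan is to read the statement off as a composition of two equivalences already available in the text: the Main Result (Theorem \ref{The_Main_Result}), which characterises the Garside property of $\Upsilon$ through the resolvability up to $\thickapprox$ of the ambiguities of $\overline{S}_\Upsilon(\mathfrak{B})$, and the $\thickapprox$-closed Composition--Diamond Lemma (Theorem \ref{CD-lemma}), which converts resolvability of ambiguities into the Gr\"obner--Shirshov basis condition and into a statement about $\mathrm{Irr}$. Concretely, I would take $\Gamma$ to be the alphabet $\{E_x : x \in \mathfrak{B}\}$, set $S := \overline{S}_\Upsilon(\mathfrak{B})$ with $(S)$ the associated set of polynomials $\{E_xE_y - E_{\mathrm{NF}(xy_1)}E_{y_2}\}$ exactly as in the statement, and then apply Theorem \ref{CD-lemma} to this data.

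Before chaining the equivalences I must check that all hypotheses of Theorem \ref{CD-lemma} hold. There are three. First, that $\thickapprox$ (the $\Upsilon^\times$-deformation congruence) is $S$-admissible: this is precisely the earlier Lemma asserting that the deformation-by-invertibles congruence is $\overline{S}_\Upsilon(\mathfrak{B})$-admissible. Second, that the preorder satisfies the descending chain condition: this is the standing hypothesis of the corollary. Third, and this is the delicate point, that $\gtrapprox$ of Definition \ref{order} extends to a $\thickapprox$-admissible categorical partial preorder on $\mathsf{Cat}\langle\Upsilon\rangle$ that is compatible with $S$ in the sense of Definition \ref{addmisible_preorder}. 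For compatibility I would use the fact that, by Definition \ref{reducible_elements}, a reduction $E_xE_y \to E_{\mathrm{NF}(xy_1)}E_{y_2}$ fires exactly when $E_x \gtrapprox E_{\mathrm{NF}(xy_1)}$ holds strictly, so each reduction moves the leading term strictly in the $\gtrapprox$-direction (the head $E_x$ being replaced by $E_{\mathrm{NF}(xy_1)}$, of which $E_x$ is a strict left-divisor) and cannot be reversed; this yields $\omega_\sigma < W_\sigma$ and $W_\sigma \not< \omega_\sigma$, as required.

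With the hypotheses in force I would chain the equivalences. By the Main Result, $\Upsilon$ is a Garside germ if and only if all ambiguities of $\overline{S}_\Upsilon(\mathfrak{B})$ are resolvable up to $\thickapprox$; by the equivalence $(1)\Longleftrightarrow(2)$ of Theorem \ref{CD-lemma}, the latter holds precisely when $(\overline{S}_\Upsilon(\mathfrak{B}))$ is a $\thickapprox$-closed Gr\"obner--Shirshov basis relative to $\gtrapprox$. This gives the stated biconditional. For the concluding assertion about normal forms, I would invoke the equivalence $(2)\Longleftrightarrow(4)$ of Theorem \ref{CD-lemma}: once $(\overline{S}_\Upsilon(\mathfrak{B}))$ is a $\thickapprox$-closed Gr\"obner--Shirshov basis, the set $(\mathrm{Irr}(\overline{S}_\Upsilon(\mathfrak{B})),\thickapprox)$ is a $\mathbb{K}$-basis, so the Gr\"obner--Shirshov normal form of a word is its reduction-unique (up to $\thickapprox$) irreducible representative. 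Since the Main Result already identifies $\mathrm{Irr}(\overline{S}_\Upsilon(\mathfrak{B}))$ with the set of $\Upsilon$-normal elements, the Gr\"obner--Shirshov normal form coincides with the $\Upsilon$-normal form.

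I expect the compatibility verification to be the main obstacle, since $\gtrapprox$ is defined only on the generators $E_x$, whereas Theorem \ref{CD-lemma} requires a genuine categorical partial preorder on all of $\mathsf{Cat}\langle\Upsilon\rangle$. Reconciling the two -- confirming that the two-sided stability clause and the $\thickapprox$-compatibility clauses of Definition \ref{addmisible_preorder} survive the lift to arbitrary paths, and that the descending chain condition transfers from $\gtrapprox$ on generators to the induced order on words -- is the only genuinely non-formal part of the argument; everything else is the bookkeeping of citing the Main Result and the appropriate equivalences of the Composition--Diamond Lemma.
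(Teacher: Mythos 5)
Your proposal is correct and follows exactly the route the paper takes: the paper's own proof is the single line ``It immediately follows from Theorems \ref{CD-lemma}, \ref{The_Main_Result}'', and your argument is precisely the unpacking of that citation --- chaining the Main Result with the equivalences $(1)\Leftrightarrow(2)$ and $(2)\Leftrightarrow(4)$ of the Composition--Diamond Lemma. Your explicit verification of the hypotheses (the $S$-admissibility lemma, the descending chain condition, and the lift of $\gtrapprox$ from generators to a $\thickapprox$-admissible categorical partial preorder compatible with $\overline{S}_\Upsilon(\mathfrak{B})$) supplies detail the paper leaves implicit, and you correctly flag the preorder-lifting step as the only point requiring real care.
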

\begin{proof}
 It immediately follows from Theorems \ref{CD-lemma}, \ref{The_Main_Result}.
\end{proof}

\begin{remark}
Consider a germ $\Upsilon = \Upsilon_E(\mathscr{C},\mathscr{A}, P,\mathfrak{B})$ is assumed to be a left-associative and left-cancellative. We aim to show that a deformations of paths by invertible elements can be also deduced as follows. Let us assume that we should use the classical version of Composition--Diamond lemma. Consider an ambiguity, say, $(\sigma, \tau, E_a,E_b,E_c)$, with $E_a\bullet E_b, E_b\bullet E_c \in \Upsilon.$ We then get 
\[
 \xymatrix{
 & E_aE_bE_c \ar@{->}[rd]^{\mathfrak{r}_{\sigma}} \ar@{->}[ld]_{\mathfrak{r}_{\tau}}& \\
E_{\mathrm{NF}(ab)}E_c && E_a E_{\mathrm{NF}(bc)}
 }
\]
hence, adding this relation $E_a E_{\mathrm{NF}(bc)} = E_{\mathrm{NF}(ab)}E_c$, and if $E_a \gtrapprox E_{\mathrm{NF}(ab)}$ we then get a reduction mentioned before. But let us assume now that $E_a \not\gtrapprox E_{\mathrm{NF}(ab)}$, thus we get $E_{a} =^\times E_{\mathrm{NF}(ab)}$, say $E_a = E_{\mathrm{NF}(ab)}E_e$. Hence, by left-cancelativity of $\Upsilon$, $E_bE_e =  \mathbf{1}_{\mathfrak{s}(b)}$, then $E_b = E_e^{-1},$ \textit{i.e.,} $E_e \in \Upsilon^\times.$ Then adding new relation we get deformations of paths. It is clear that we can, without loss of generality, assume that $E_{\mathrm{NF}(ab)}E_c$, $E_aE_{\mathrm{NF}(bc)}$ are assumed to be irreducible. 
\end{remark}

\subsection{The Case $\mathscr{C} = \mathscr{A}$, $P=\mathbf{id}_\mathscr{C}$}\label{C=A}

Let us consider a left-cancellative category $\mathscr{C}$ with a presentation $\mathscr{C}= \mathrm{Cat}\langle \Gamma\, |\, R \rangle$ and let $\thickapprox$ be a deformation of paths by invertible elements. Let $S_R$ be a set of reductions on $\mathsf{Cat}\langle \Gamma \rangle$ such that the corresponding set of polynomials $(S_R)$ is to be a $\thickapprox$-closed Gr\"obner--Shirshov basis of the ideal $I(R)$ of the category algebra $\mathbb{K}\mathscr{C}$ relative to a $\thickapprox$-admissible categorical preorder $\lessapprox$ compatible with $S_R$ having descending chain condition. Thus, by Theorem \ref{CD-lemma}, a basis $\mathfrak{B} = \mathfrak{B}(\mathscr{C})$ of $\mathscr{C}$ is the set $\mathrm{Irr}(S_R,\thickapprox)$ and $\mathscr{C} \cong \mathsf{Cat}\langle \Gamma \, | \, a\cdot b = \mathfrak{r}_{S_R}(ab) \rangle$.

Next, for an identity functor $\mathbf{id}_\mathscr{C}:\mathscr{C} \to \mathscr{C}$ we consider its arbitrary section $E$ and by Construction \ref{the_main_construction} we construct the corresponding germ $\Upsilon_E(\mathscr{C},\mathfrak{B}')$ and the corresponding set of reductions $\overline{S}_\Upsilon(\mathfrak{B}')$ for some $\mathfrak{B}' \subseteq \mathfrak{B}$. It is easy to see that for an identity functor $\mathbf{id}_\mathscr{C}:\mathscr{C} \to \mathscr{C}$ considering its section $E$ is the same as choosing the corresponding subset of basis elements of $\mathscr{C}$, hence it is enough to consider a subset $\mathfrak{B}'$ of basis elements of $\mathscr{C}$. We thus denote the corresponding germ by $\Upsilon(\mathscr{C},\mathfrak{B}')$, and instead of $E_a$ we just write $a$ for any $a \in \mathrm{Irr}(S_R).$

Thus, by Theorem \ref{The_Main_Result}, we can say that there is the following one-to-one correspondence
\[
 \begin{pmatrix*}[l]
   \mbox{Garside families in } \\ \mbox{a left-cancellative category $\mathscr{C}$} \\\mbox{with a presentation $\mathscr{C} = \mathrm{Cat}\langle \Gamma\, |\, R \rangle$.}
 \end{pmatrix*}
 \rightleftarrows \begin{pmatrix*}[l]
   \mbox{subsets $\mathfrak{B}' \subseteq \mathrm{Irr}(S_R)$ such that } \\ \mbox{the corresponding set $\overline{S}_\Upsilon(\mathfrak{B}')$ of reductions }\\
   \mbox{is to be confluent up to $\thickapprox$.}
 \end{pmatrix*}
\]

\begin{example}[\textbf{The Klein bottle monoid, (cf. \cite[A, I. 3.2 and Example IV, 2.35]{Dehbook})}]\label{the_Klein_bottle_monoid}
  Let us consider the Klein bottle monoid 
  \[
   K^+ = \mathsf{Smg}\langle a, b \, | \, bab = a\rangle
   \]
 
Set $a<b$ and consider the corresponding deg-lex order $\le$ on the free monoid $W=W(a,b)$ generated by $a,b$. We have only one ambiguity $(\rho,\rho,ba,b,ab)$, where $\rho = (bab, a)$. We have
\[
 \xymatrix{
  & babab \ar@{->}[ld]_\rho  \ar@{->}[rd]^\rho& \\
  a^2b && ba^2
 }
\]
hence we get a new relation $\sigma = (ba^2, a^2b)$ and then the ambiguity $(\rho,\sigma, ba,b,a^2)$. We obtain
\[
 \xymatrix{
& baa^2b \ar@{=}[r] & ba^2 ab \ar@{->}[r]^{\sigma}& a^2bab \ar@{->}[r]^\rho & a^2a \ar@{=}[r] & a^3 \\
baba^2 \ar@{->}[ru]^{\sigma} \ar@{->}[rd]_{\rho} \\
& a^3
 }
\]
i.e, this ambiguity is resolvable. It is clear that we have no any other ambiguity and thus by the Composition--Diamond lemma the polynomials $bab-a$, $ba^2 - a^2b$ form a Gr\"obner--Shirshov basis of the ideal $R=(bab -a)$ relative to the order $\le$. Hence, by the Composition--Diamond lemma, $\mathrm{Irr}(R)$ is a $\mathbb{K}$-basis for the semigroup algebra $\mathbb{K}[K^+]$, therefore the basis elements of $K^+$ are all words $w$ of the $W$ which does not contain subwords of form $bab$ and $ba^2$. It is easy to verify that
\[
 \mathrm{Irr}(R) = \bigcup_{n,m \ge 1} \{1,a^n,b^m,a^nb^m, b^ma, a^nb^ma\}.
\]

For any $m \ge 0$ let us consider the following subset $\mathfrak{B}_m$ of $\mathrm{Irr}(R)$
\[
 \mathfrak{B}_m = \bigcup_{k \ge 0} \{ 1, a,b^k,b^ka, ab^{m+k}, ab^{m+k}a \}.
\]

Let us consider the corresponding germ $\Upsilon_m: = \Upsilon(K^+,\mathfrak{B}_m)$ where the partial map $\bullet:\Upsilon_m^{[2]} \to \Upsilon_m$ is defined as follows

\begin{center}
 \begin{tabular}{l||l|l|l|l|l|l}
 $\bullet$ & $1$ & $a$ & ${b^k}$ & ${b^na}$ & ${ab^{m+p}}$ & ${ab^{m+q}a}$\\
 \hline
 \hline
 $1$ & $1$ & $a$ & ${b^k}$ & ${b^na}$ & ${ab^{m+p}}$ & ${ab^{m+q}a}$ \\ 
 \hline
 $a$ & $a$ &  & $a \bullet {b^k}$ & $a\bullet {b^na}$ &  & \\ 
 \hline
 ${b^r}$ & ${b^r}$ & ${b^ra}$ & ${b^{r+k}}$ & ${b^{r+n}a}$ & ${b^r} \bullet {ab^{m+p}}$ & ${b^r} \bullet {ab^{m+q}a}$\\ \hline
 ${b^sa}$ & ${b^sa}$ &  & ${b^sa}\bullet {b^k}$ & ${b^sa}\bullet {b^na}$ &  & \\ 
 \hline
 ${ab^{m+t}}$ & ${ab^{m+t}}$ & ${ab^{m+t}a}$ & ${ab^{m+t+k}}$ & ${ab^{m+t+n}a}$ & ${ab^{m+t}}\bullet {ab^{m+p}}$ & \\ 
 \hline
 ${ab^{m+l}a}$ & ${ab^{m+l}a}$ &  & ${ab^{m+l}a}\bullet {b^k}$ &  &  & \\ 
 \hline
 \end{tabular}
\end{center}
here

\begin{eqnarray*}
  a \bullet {b^k} &:=& {ab^k}, \quad \mbox{if $k \ge m$},\\
  a \bullet {b^na} &:=& {ab^na}, \quad \mbox{if $n \ge m$},\\
  {b^r} \bullet {ab^{m+p}} &:=& \begin{cases} a, & \mbox{if $r=m+p$,} \\ {b^{r-m-p}a}, & \mbox{if $r > m+p$, $r-p \ge 2m$,} \\ {ab^{m+p-r}}, & \mbox{if $r<m+p$, $p \ge r$,}
  \end{cases}\\
  {b^r} \bullet {ab^{m+q}a} &:=& {ab^{q+m-r}a}, \quad \mbox{if $r<q+m$, $q\ge r$,}\\
  {b^sa} \bullet {b^k} &:=& \begin{cases}
    a, & \mbox{if $s = k$,} \\
    {b^{s-k}a},& \mbox{if $s>k$, $s-k \ge m$,}\\
    {ab^{k-s}}, &\mbox{if $s<k$, $k-s \ge m$,}
  \end{cases}\\
  {ab^{m+t}}\bullet {ab^{m+p}} &:=& {ab^{t-p}a}, \quad \mbox{if $t-p \ge m$},\\
 {ab^{m+p}} \bullet {ab^{k}} &:=&  ab^{m+p-k}a, \quad \mbox{if $m+p>k$, $p \ge k$,} 
\end{eqnarray*}

Set $S = \cup_{u,v \in \mathfrak{B}_m}\{(uv, u\bullet v)\}$. By the straightforward verification it is easy to see that any ambiguity of the set $S$ of reductions are resolvable with respect to the preorder $\le$. Hence, by Theorem \ref{The_Main_Result}, the germ $\Upsilon_m$ is Garside germ for any $m \ge 0,$ and hence, $\mathfrak{B}_m$ is a Garside family in $K^+$ for any $m\ge 0.$ Then, by (\ref{I(Ea,Eb)}), the corresponding $\mathcal{I}$-greatest function is defined as follows $\mathfrak{I}(u,v):=u\bullet v$ for all $u,v \in \mathfrak{B}_m$, it can be also identify with a function $H$ defined as $H(g):=\mathrm{min}_\preccurlyeq(g,ab^ma)$ (see \cite[A, IV, Example 2.35]{Dehbook}).
\end{example}

\section{Examples}

In this section we consider some interesting cases and examples.

\subsection{A Free Abelian Monoid}

We show how free abelian monoid can be obtained by using our method. We refer to \cite[3--5 pp.]{Dehbook} for details.

Take $n \ge 1$, and consider the free abelian monoid $\mathbb{N}^n$. It is clear that any its element $g$ can be viewed as a map $g: \{1,\ldots, n\} \to \mathbb{N}$. Denote by $g(k)$ the $k$th entry of $g.$

For $g, g' \in \mathbb{N}$, we define $g\cdot g'(k): = g(k) + g'(k)$ for each $k$. We define $\Delta_n$ by $\Delta_n(k) = 1$ for every $k$, and put $N_n:=\{\eta \in \mathbb{N}^n\, | \, \eta(k) \in \{0,1\} \mbox{ for any $k$}\}$. For $i\le n$, define $\alpha_i$ in $\mathbb{N}^n$ by $\alpha_i(k) = \delta_{i,j}$.

Thus we get
\[
 \mathbb{N}^n = \mathsf{Smg}\langle \alpha_i\alpha_j = \alpha_j\alpha_i, \mbox{ for all $1\le i,j \le n$} \rangle.
\]

For $f,g \in \mathbb{N}^n$, say that $f\lessapprox g$ is true if $f(i) \lessapprox g(i)$ holds for every $1\le i \le n$. We have (see \cite[A, I.1, Proposition 1.1]{Dehbook})

\textit{Every element of $\mathbb{N}^n$ admits a unique decomposition of the form $\Delta^d_n \eta_1\cdots \eta_p$ with $d$ in $\mathbb{N}$ and $\eta_1,\ldots, \eta_p \in N_n$ satisfying $\eta_1 \ne \Delta_n$, $\eta_p \ne 1$, and, for every $i<p$, $g \lessapprox \eta_{i+1}$ implies that $\eta_ig \not \lessapprox \Delta_n$, for any $g \in \mathbb{N}^n$.
}

For instance, let $n=3$,  and take $f = (5,4,3)$, we have
\begin{eqnarray*}
 (5,4,3) &=& (3,3,3) + (2,1,0) \\
 &=& (3,3,3) + (1,1,0) + (1,0,0),
\end{eqnarray*}
thus we obtain $f = \Delta_3^3\cdot (\alpha_1\alpha_2)\cdot \alpha_1$

Let us consider the following monoid (group) $M = (\mathbb{Z}/2)^n$, it can be also presented as follows
\[
 (\mathbb{Z}/2)^n = \mathsf{Sgr}\langle \theta, \xi_1,\ldots, \xi_n\, |\, \xi_i^2= \theta, \, 1 \le i \le n  \rangle,
\]
where $\theta(k) = 0 \bmod{2}$ for all $1\le k \le n$, $\xi_i(k) = \delta_{i,k}$, and for any $\varphi,\psi \in Z$, $\varphi \cdot \psi (k):= \varphi(k) + \psi(k) \bmod{2}$.

We have an epimorphism $P: \mathbb{N}^n \to (\mathbb{Z}/2)^n$, $f \mapsto f \bmod{2}$, where $(f\bmod{2})(k):=f(k)\bmod{2}$, for all $1\le k \le n.$

Let us consider, for instance, the following order; set $\xi_i > \xi_j$ if $i<j$, and consider then the corresponding deg-lex ordering on all elements of $(\mathbb{Z}/2)^n$. It is clear that the set of polynomials $\cup_{1 \le i\le n}\{\xi_i^2-\theta\}$ is a Gr\"obner--Shirshov basis. Hence, by the Composition--Diamond lemma (see Theorem \ref{CD-lemma} and Remark \ref{def_CD=CD}) the set of squarefree words in $\xi_i$ forms a basis $\mathfrak{B}$ for $(\mathbb{Z}/2)^n$. On the other hand, it is clear that any basic element is also an arbitrary map $\varphi:\{1,\ldots, n\} \to \mathbb{Z}/2$. Set $\varphi \cap \psi = \varnothing$ if $\varphi(k) \ne \psi(k)$ for all $1\le k \le n$, and $\varphi \cap \psi \ne \varnothing$ in otherwise.

Take a section $E:(\mathbb{Z}/2)^n \to \mathbb{N}$ of $P$ defined by $E(\varphi)(k) := E_\varphi(k):=\varphi(k \bmod{2})$, $1 \le k \le n$. By Constructions \ref{the_main_construction}, \ref{reduction_system}, we thus have a germ
\begin{eqnarray*}
 \Upsilon &=& \Upsilon_E(\mathbb{N},\mathbb{Z}/2,P,\mathfrak{B})\\
 &=&  \Upsilon = \left\{ E_\varphi, \, \varphi \in (\mathbb{Z}/2)^n\, |\, E_\varphi \bullet E_\psi = E_{\varphi \cdot \psi} \mbox{ whevere $\varphi \cap \psi = \varnothing$} \right\} 
\end{eqnarray*}
and a set of reductions
\[
\overline{S}_\Upsilon(\mathfrak{B}) = \bigcup_{\substack{\varphi \cap \psi\cdot \chi = \varnothing \\ \varphi \cdot \psi \cap \chi \ne \varnothing }}\left\{ E_\varphi E_{\psi\cdot \chi} \to E_{\varphi \cdot \psi} E_{\chi}\right\}.
\]

Next, we have $E_\varphi \gtrapprox E_\psi$ if there exists  $\varphi' \in (\mathbb{Z}/2)^n$ such that $\psi = \varphi\cdot \varphi'$ and $\varphi \cap \varphi' = \varnothing$. It is easy to see that $\Upsilon^\times = \{E_\theta\}.$ Since the number of all generators of $\Upsilon$ is finite then the preorder $\gtrapprox$ has descending chain condition.

By the straightforward computation one can get that all ambiguities of $\overline{S}_\Upsilon(\mathfrak{B})$ are resolvable, hence by Theorem \ref{CD-lemma}, the corresponding set of polynomials
\[
 (\overline{S}_\Upsilon(\mathfrak{B})) = \cup_{\substack{\varphi \cap \psi\cdot \chi = \varnothing \\ \varphi \cdot \psi \cap \chi \ne \varnothing }}\left\{ E_\varphi E_{\psi\cdot \chi} - E_{\varphi \cdot \psi} E_{\chi}\right\}
\]
is a Gr\"onber--Shirshov basis relative to $\lessapprox$. Therefore, by Theorem \ref{The_Main_Result}, $\Upsilon$ is a Garside germ and $\Upsilon$-normal form is the Gr\"obner--Shirshov normal form.

Finally, it is clear that the family $\cup_{\varphi \in (\mathbb{Z}/2)^n}\{E_\varphi\}$ is a generating family for $\mathbb{N}^n$ and hence by Theorem \ref{The_Main_Result}, 
\[
 \mathbb{N}^n \cong \mathsf{Smg} \langle E_\varphi, \, \varphi \in (\mathbb{Z}/2)^n\, |\, E_\varphi E_{\psi\cdot \chi} = E_{\varphi \cdot \psi}E_\chi \mbox{ whenever $\varphi \cap \psi\cdot \chi = \varnothing$ and $\varphi \cdot \psi \cap \chi \ne \varnothing $} \rangle,
\]
and $\cup_{\varphi \in (\mathbb{Z}/2)^n}\{E_\varphi\}$ is a Garside family in it.

\subsection{``A Natural Appearing of Invertible Elements''}\label{ex_of_empty_GSB}
Let us consider the following monoid
\[
 M = \mathsf{Smg} \langle p,p',r,q \, |\, prp'=p  \rangle.
\]

Using an arbitrary order on generators and consider the corresponding deg-lex order on all elements of the free monoid $W = W(p,p',r,q)$ generated by $p,p',r,q.$ We see that $\{prp'-p\}$ is a Gr\"obner--Shirshov basis for the ideal $(prp'-p)$ in $\mathbb{K}\langle p,p',r\rangle$. Hence by the classical version of the Composition--Diamond lemma (see Theorem \ref{CD-lemma}, Remark \ref{def_CD=CD}), 
\[
\mathfrak{B} = \bigcup_{u,v,w \in F}\{w \ne u prp'v\}
\]
is a basis for $M$. 

Next, let $\widetilde{M}$ be a left-cancellative monoid such that there is a surjective homomorphism $P:\widetilde{M} \to M$.

Let us consider a section $E:M \to \widetilde{M}$ of the $P$ and set
\begin{eqnarray*}
 \Upsilon &:=& \Upsilon_E(\widetilde{M},M,P,\mathfrak{B}) \\
  &=& \left\{ E_p,E_r,E_{p'},E_q\, E_{pr},E_{rq}\,| \, E_p\bullet E_r = E_{pr}, E_r \bullet E_q = E_{rq}, E_{pr} \bullet E_{p'} = E_p \right\}.
\end{eqnarray*}
and
\[
 \overline{S}_\Upsilon(\mathfrak{B}) = \{E_p E_r \to E_{pr}, E_r E_q \to E_{rq}, E_{pr}E_{p'} \to E_p\}. 
\]

Suppose further that $\Upsilon$ is a left-associative and left-cancellative germ. By $E_{pr}\bullet E_{p'} \in \Upsilon$, $E_r \bullet E_{p'} \in \Upsilon$, and by $\Upsilon \ni E_p\bullet E_{r} = (E_{pr}\bullet E_{p'})\bullet E_r$, $E_{p'}\bullet E_r \in \Upsilon.$ Therefore, by Definition \ref{def_of_germ} (3), $E_p \bullet (E_r \bullet E_{p'}) = (E_p \bullet E_r)\bullet E_{p'}$. Hence, $E_rE_{p'} = \mathbf{1}$ because of $(E_p \bullet E_r)\bullet E_{p'} = E_p \bullet (E_r \bullet E_{p'}) =  E_p$.

Next, let us consider an ambiguity $(E_r, E_{p'},E_{r})$, we obtain
\[
 \xymatrix{
  & E_r E_{p'} E_r \ar@{->}[ld] \ar@{->}[rd] & \\
  E_r && E_{r}E_{p'r}
 }
\]
then by left-cancellativity of $\Upsilon$, $E_{p'}E_r = \mathbf{1}$. The ambiguity $(E_{p'},E_r,E_q)$ gives
\[
 \xymatrix{
  & E_{p'} E_r E_q \ar@{->}[ld] \ar@{->}[rd] & \\
  E_q && E_{p'}E_{rq}
 }
\]
\textit{i.e.,} we have $E_{p'}E_{rq} = E_q$.

Finally let us consider an ambiguity $(E_p,E_r, E_q)$, we have
\[
  \xymatrix{
  & E_p E_{r} E_q \ar@{->}[ld] \ar@{->}[rd] & \\
  E_{pr}E_q && E_{p}E_{rq}
 }
\]
but $E_{pr} \preccurlyeq E_p$ because of $E_p = E_{pr}E_{p'}$, \textit{i.e.,} $E_p \not \gtrapprox E_{pr}$. Thus if $\thickapprox$ is a deformation of elements by invertible elements (see Definition \ref{def_of_deformation}), we then get that all ambiguities of the system $\overline{S}_\Upsilon(\mathfrak{B})$ are resolvable up to $\thickapprox$ because of
\[
 \xymatrix{
  \cdot \ar@{->}[r]^{E_{p}} \ar@{=}[d] & \cdot \ar@{->}[r]^{E_{rq}} \ar@/_/[d]_{E_r} & \cdot \\
 \cdot \ar@{->}[r]_{E_{pr}} & \cdot \ar@{->}[r]_{E_{q}} \ar@/_/[u]_{E_{p'}} & \cdot \ar@{=}[u]
 }
\]
\textit{i.e.,} $E_{pr}E_q \thickapprox E_pE_{rq}$. Hence, by Theorem \ref{The_Main_Result}, $\Upsilon$ is a Garside germ.

\subsection{The Artin-Tits Monoid}

\subsubsection{Coxeter Systems and Groups}

Let $\mathcal{S}$ be a set. A matrix $\mathsf{M}_\mathcal{S}: \mathcal{S}\times \mathcal{S} \to \{1,2,\ldots, \infty\}$ is called {\it a Coxeter matrix } if it satisfies
\begin{align*}
  &\mathsf{m} (a,b) = \mathsf{m} (b,a);\\
  &\mathsf{m}(a,b) = 1 \mbox{ if and only if } a =b,
\end{align*}
here $a,b \in \mathcal{S}$.

Equivalently, $\mathsf{M}_\mathcal{S}$ can be represented by a {\it Coxeter graph} whose node set is $\mathcal{S}$ and whose edges are the unordered pairs $\{a,b\}$ such that $\mathsf{m}(a,b) \ge 3$. The edges with $\mathsf{m}(a,b) \ge 4$ are labeled by that numbers.

\begin{figure}[h!]
 \begin{tikzpicture}
  \draw (-3,0) node {$\begin{pmatrix}
    1 & 3 & 2 & 2\\
    3 & 1 & 3 & 3\\
    2 & 3 & 1 & 3\\
    2 & 2 & 3 & 1
  \end{pmatrix}$};
 \draw[line width =1] (0,0) -- (1,0);
  \draw[line width =1] (1,0) -- (2,0);
  \draw[line width =1] (2,0) -- (3,0);
  \draw[line width=1] (0,0) circle (3pt) node[below] {$s_1$};
  \draw[line width=1] (1,0) circle (3pt) node[below] {$s_2$};
  \draw[line width=1] (2,0) circle (3pt) node[below] {$s_3$};
  \draw[line width=1] (3,0) circle (3pt) node[below] {$s_4$};
  \fill[white] (0,0) circle(3pt);
  \fill[white] (1,0) circle(3pt);
  \fill[white] (2,0) circle(3pt);
  \fill[white] (3,0) circle(3pt);
  \draw (-3,-3) node {$\begin{pmatrix}
    1 & 3 & 3 & 2\\
    3 & 1 & 5 & 2\\
    3 & 5 & 1 & \infty\\
    2 & 2 & \infty & 1
  \end{pmatrix}$};
  \draw[line width =1] (0,-2.3) -- (0,-3.7);
  \draw[line width =1] (0,-3.7) -- node[below]{$5$}(1.5,-3);
  \draw[line width =1] (0,-2.3) -- (1.5, -3);
  \draw[line width =1] (1.5,-3) -- node[above]{$\infty$}(3,-3);
  \draw[line width=1] (0,-2.3) circle (3pt) node[above] {$s_1$};
  \draw[line width=1] (0,-3.7) circle (3pt) node[below] {$s_2$};
  \draw[line width=1] (1.5,-3) circle (3pt) node[below] {$s_3$};
  \draw[line width=1] (3,-3) circle (3pt) node[below] {$s_4$};
  \fill[white] (0,-2.3) circle(3pt);
  \fill[white] (0,-3.7) circle(3pt);
  \fill[white] (1.5,-3) circle(3pt);
  \fill[white] (3,-3) circle(3pt);
 \end{tikzpicture}
 \caption{The Coxeter matrix and its corresponding graph.}
\end{figure}
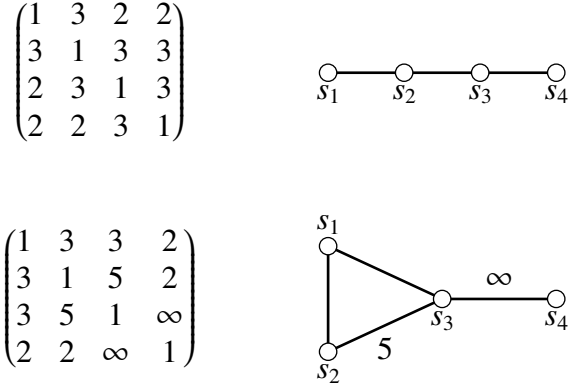

\begin{definition}
 Let $\mathcal{S}$ be a set, $\mathcal{W}$ a group. We say that $\mathcal{(W,S)}$ is a \textit{Coxeter system}, and that $\mathcal{W}$ is a \textit{Coxeter group}, if $\mathcal{W}$ admits the presentation
 \[
  \mathcal{W} = \mathrm{Smg} \langle \mathcal{S} \, |\,  s^2 = 1,\, \mbox{for all $s \in \mathcal{S}$, and whenever $\mathsf{m}(a,b) \ne \infty$, $(ab)^{\mathsf{m}(a,b)} = (ba)^{\mathsf{m}(b,a)}$}  \rangle.
 \]
\end{definition}

Given a Coxeter system $\mathcal{(W,S)}$, each element $w \in \mathcal{W}$ can be written as a product of generators: $w = s_1s_2\cdots s_k$, $s_i \in \mathcal{S}$. If $k$ is minimal among all such expressions for $w$, then $k$ is called the \textit{length} of $w$ (written $\ell(w) =k$) and the word $s_1s_2\cdots s_k$ is called a \textit{reduced word} (or \textit{reduced decomposition} or \textit{reduced expression}) for $w$.

The following properties are fundamental in the combinatorial theory of Coxeter groups: they characterize such groups.

\textbf{(Deletion Property).} \textit{If $w = s_1s_2\cdots s_k$ and $\ell(w)<k$, then $w = s_1\cdots \widehat{s_i} \cdots \widehat{s_j} \cdots s_k$ for some $1\le i<j\le k.$
}

\textbf{(Exchange Property).} \textit{Let $w = s_1s_2\cdots s_k$ be reduced expression and $s\in \mathcal{S}$. If $\ell(sw)\le \ell(w)$, then $sw = s_1\cdots \widehat{s_i}  \cdots s_k$ for some $1\le i<j\le k,$ for some $1 \le i \le k.$
}

\begin{theorem}[{\cite[Theorem 1.5.1]{BB}}]
 Let $\mathcal{W}$ be a group and $\mathcal{S}$ a set of generators of order $2$. Then the following are equivalent.
 \begin{enumerate}
     \item $\mathcal{(W,S)}$ is a Coxeter system.
     \item $\mathcal{(W,S)}$ has the Exchange Property.
     \item $\mathcal{(W,S)}$ has the Deletion Property.
 \end{enumerate}
\end{theorem}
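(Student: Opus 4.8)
The plan is to establish the equivalence through the implications $(2)\Leftrightarrow(3)$, $(1)\Rightarrow(2)$, and $(3)\Rightarrow(1)$, which together close the loop. The two combinatorial equivalences $(2)\Leftrightarrow(3)$ are elementary; the genuine content is concentrated in $(1)\Rightarrow(2)$, which forces me to construct the reflection representation, while $(3)\Rightarrow(1)$ is a presentation (equivalently, a confluence) argument in the spirit of Section 2.

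For $(3)\Rightarrow(2)$: given a reduced expression $w=s_1\cdots s_k$ and $s\in\mathcal{S}$ with $\ell(sw)\le\ell(w)$, the word $s\,s_1\cdots s_k$ has length $k+1$ but represents an element of length at most $k$, so it is non-reduced. Applying the Deletion Property and observing that no two of the $s_i$ may be deleted (that would exhibit $w$ as a word shorter than $\ell(w)=k$), the first deleted letter must be the leading $s$, yielding $sw=s_1\cdots\widehat{s_i}\cdots s_k$. Conversely, for $(2)\Rightarrow(3)$, given $w=s_1\cdots s_k$ with $\ell(w)<k$, pick the least $j$ for which $s_1\cdots s_j$ is non-reduced; then $u=s_1\cdots s_{j-1}$ is reduced and $\ell(us_j)\le\ell(u)$. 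Passing to inverses (using $\ell(x)=\ell(x^{-1})$) turns this into a left-multiplication instance of the Exchange Property for $u^{-1}$ and $s_j$, producing an index $i<j$ with $us_j=s_1\cdots\widehat{s_i}\cdots s_{j-1}$, hence the double deletion $w=s_1\cdots\widehat{s_i}\cdots\widehat{s_j}\cdots s_k$.

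The heart of the argument is $(1)\Rightarrow(2)$, where I would build the geometric representation. Let $m(s,t)$ denote the order of $st$, let $V$ be the real vector space on a basis $\{\alpha_s\}_{s\in\mathcal{S}}$, and equip it with the symmetric form $B(\alpha_s,\alpha_t)=-\cos\!\big(\pi/m(s,t)\big)$, interpreted as $-1$ when $m(s,t)=\infty$. Each $s$ acts by $\rho_s(v)=v-2B(\alpha_s,v)\alpha_s$; one checks $\rho_s^2=1$ and that $\rho_s\rho_t$ has order exactly $m(s,t)$, so the Coxeter presentation guarantees that $s\mapsto\rho_s$ extends to a homomorphism $\rho:\mathcal{W}\to GL(V)$. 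Writing $\Phi=\{w(\alpha_s):w\in\mathcal{W},\,s\in\mathcal{S}\}$ for the root system and splitting it into positive and negative roots, the key lemma is the sign criterion: for reduced $w=s_1\cdots s_k$ the roots $\beta_i:=s_k\cdots s_{i+1}(\alpha_{s_i})$ are distinct and positive, and $\ell(ws)>\ell(w)$ iff $w(\alpha_s)$ is positive. If now $\ell(sw)\le\ell(w)$, then $w^{-1}(\alpha_s)$ is negative; locating the first sign change along the $\beta_i$ and invoking the reflection identity at that index returns precisely the exchanged expression $sw=s_1\cdots\widehat{s_i}\cdots s_k$.

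For $(3)\Rightarrow(1)$ I would form the abstract Coxeter group $\widetilde{\mathcal{W}}$ on $\mathcal{S}$ with relations $s^2=1$ and $(st)^{m(s,t)}=1$, together with the canonical surjection $\pi:\widetilde{\mathcal{W}}\to\mathcal{W}$, and show $\ker\pi=1$. Since $(3)$ already yields $(2)$, the Exchange Property is available, and Tits' word property then applies: every word can be carried to a reduced word, and any two reduced words for the same element are linked by braid moves $\underbrace{sts\cdots}_{m(s,t)}=\underbrace{tst\cdots}_{m(s,t)}$, all of which are consequences of the defining relations of $\widetilde{\mathcal{W}}$. Hence every word representing $1$ in $\mathcal{W}$ collapses to the empty word using only the Coxeter relations, so $\pi$ is injective and $(\mathcal{W},\mathcal{S})$ is a Coxeter system. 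This last step is essentially a confluence statement---the braid moves together with the reductions $s^2\to 1$ form a rewriting system whose ambiguities all resolve---which is where the argument meets the viewpoint of Theorem \ref{CD-lemma}. The main obstacle is the representation step in $(1)\Rightarrow(2)$: verifying that $\rho_s\rho_t$ has order exactly $m(s,t)$ and establishing the length--root sign criterion, since this is the only place where the Coxeter relations enter essentially rather than formally.
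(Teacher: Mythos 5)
This theorem is not proved in the paper at all: it is quoted verbatim from B\"orner--Brenti \cite[Theorem 1.5.1]{BB} and used as a black box (only Lemma \ref{perp} and Corollary \ref{perp2} are derived from the Deletion Property). So there is no in-paper argument to compare against; judged on its own, your outline is the standard and essentially correct proof. The elementary equivalences are handled properly: in $(3)\Rightarrow(2)$ you correctly rule out deleting two of the $s_i$ by the reducedness of $w$, and in $(2)\Rightarrow(3)$ the passage to inverses legitimately converts the right-multiplication instance into the left-handed Exchange Property as stated here. The reflection-representation argument for $(1)\Rightarrow(2)$ is the classical one; the only bookkeeping slip is in the sign criterion, where for the \emph{left} exchange with $w^{-1}(\alpha_s)<0$ the relevant positive roots are $s_1\cdots s_{i-1}(\alpha_{s_i})$ (those made negative by $w^{-1}$), not $s_k\cdots s_{i+1}(\alpha_{s_i})$; locating the first sign change along $\alpha_s,\ s_1(\alpha_s),\ s_2s_1(\alpha_s),\dots$ then yields $\alpha_s=s_1\cdots s_{i-1}(\alpha_{s_i})$ and hence the exchanged word. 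The one place where you must be explicit to avoid circularity is $(3)\Rightarrow(1)$: Tits' word property is usually stated \emph{for Coxeter systems}, which is exactly what you are trying to establish, so you need to invoke (or reprove) it in the form ``the word property follows from the Exchange Condition alone for any pair $(\mathcal{W},\mathcal{S})$ with $\mathcal{S}$ a set of involutive generators,'' taking $\mathsf{m}(s,t)$ to be the order of $st$ in $\mathcal{W}$ and treating the case $\mathsf{m}(s,t)=\infty$ separately in the induction. Granting that (true, and provable by Tits' induction on length using only Exchange), the collapse of any word representing $1$ to the empty word via braid moves and $s^2\to 1$ does give $\ker\pi=1$, and your closing remark that this is a confluence statement is exactly the bridge to Theorem \ref{CD-lemma} that the paper itself exploits for $\mathfrak{S}_n$ and the Artin--Tits monoids.
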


For given $u,v \in \mathcal{W}$ we write $u \perp v$ if $\ell(uv) = \ell(u) + \ell(v)$, and $u \angle v$ if $\ell(uv) \ne \ell(u) + \ell(v).$

\begin{lemma}\label{perp}
   Assume that $\ell(uv) = \ell(u) + \ell(v)$ and $\ell(vw) = \ell(v) + \ell(w)$ for some reduced $u,v,w \in W$, \textit{i.e.,} $u\perp v$ and $v \perp w$. Then $uv \perp v$ and $u \perp vw.$
\end{lemma}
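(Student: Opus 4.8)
The one ingredient I intend to use is \emph{subadditivity} of the Coxeter length: for all $x,y\in\mathcal{W}$ one has $\ell(xy)\le \ell(x)+\ell(y)$, which is immediate, since concatenating reduced expressions for $x$ and $y$ produces an expression for $xy$ of length $\ell(x)+\ell(y)$. Since each relation $p\perp q$ merely asserts that $\ell(pq)$ \emph{attains} this bound, the whole statement reduces to determining the single quantity $\ell(uvw)$ exactly and then comparing it with the various two-factor estimates. So the plan is: first establish the additivity $\ell(uvw)=\ell(u)+\ell(v)+\ell(w)$, and then read the orthogonality relations off the resulting chain of equalities.

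The read-off is painless. Granting $\ell(uvw)=\ell(u)+\ell(v)+\ell(w)$, the hypothesis $u\perp v$ rewrites it as $\ell(uvw)=\ell(uv)+\ell(w)$, which is $uv\perp w$; symmetrically $v\perp w$ rewrites it as $\ell(uvw)=\ell(u)+\ell(vw)$, which is $u\perp vw$. Thus the entire weight of the argument sits on proving the triple additivity, and this is the step I expect to be the real obstacle.

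Here is the subtlety. Applying subadditivity along the two bracketings, together with the two hypotheses, yields only the upper estimates
\[
 \ell(uvw)\le \ell(uv)+\ell(w)=\ell(u)+\ell(v)+\ell(w),\qquad \ell(uvw)\le \ell(u)+\ell(vw)=\ell(u)+\ell(v)+\ell(w),
\]
so that $u\perp v$ and $v\perp w$ control $\ell(uvw)$ only from \emph{above} -- the length of a triple product can in principle drop below $\ell(u)+\ell(v)+\ell(w)$, so a matching lower bound cannot be extracted from these two inner relations by themselves. The lower bound is precisely the extra information carried by the setting in which the lemma is invoked: the iterated germ product $(E_u\bullet E_v)\bullet E_w$ (equivalently $E_u\bullet(E_v\bullet E_w)$) is defined only when one of the outer products is itself length-additive, say $\ell(uvw)=\ell(uv)+\ell(w)$, and this pins $\ell(uvw)$ down to $\ell(u)+\ell(v)+\ell(w)$, closing the squeeze. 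With the triple additivity secured this way, the relations $uv\perp w$ and $u\perp vw$ follow as in the previous paragraph; they are exactly the left- and right-associativity clauses (4) and (5) of Definition \ref{def_of_germ} for the Artin--Tits germ, and no appeal to the Exchange or Deletion Property is needed.
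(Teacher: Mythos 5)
Your structural analysis is correct, and it puts its finger on a real defect: Lemma \ref{perp} as stated is false, and the missing lower bound you isolate is exactly the point where the paper's own argument breaks down. A counterexample already lives in $\mathfrak{S}_3$: take $u=s_1s_2$, $v=s_1$, $w=s_2s_1$. Then $uv=vw=s_1s_2s_1$ has length $3$, so $u\perp v$ and $v\perp w$; but $uvw=s_1s_2s_1s_2s_1=s_2$ has length $1$, while $\ell(uv)+\ell(w)=\ell(u)+\ell(vw)=5$, so neither $uv\perp w$ nor $u\perp vw$ holds. The paper's proof proceeds via the Deletion Property: assuming $\ell(uvw)<\ell(u)+\ell(v)+\ell(w)$, two letters can be deleted from the concatenation of the three reduced words, and the proof lists the three cross-block configurations for the deleted pair and dismisses them all at once. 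But only the configurations with both deleted letters inside the $uv$-block or both inside the $vw$-block contradict the hypotheses $u\perp v$ and $v\perp w$; the configuration with one letter in the $u$-block and one in the $w$-block --- precisely your ``the length of a triple product can drop'' scenario --- is not excluded by anything, and in the example above it is exactly what happens (delete the first and fourth letters of $s_1s_2s_1s_2s_1$).

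Your repaired statement is the right one, and it is all the paper actually needs. Under $u\perp v$ and $v\perp w$, subadditivity shows that each of $uv\perp w$ and $u\perp vw$ is equivalent to the single equality $\ell(uvw)=\ell(u)+\ell(v)+\ell(w)$, hence they are equivalent to each other; this biconditional is exactly what clause (3) of Definition \ref{def_of_germ} demands, and it is the form in which the lemma is invoked in the proof of Proposition \ref{artin-GSB} (``$E_u\bullet(E_v\bullet E_w)\in\Upsilon$ if and only if $(E_u\bullet E_v)\bullet E_w\in\Upsilon$''). Corollary \ref{perp2} also survives intact, since its hypotheses $uv\perp w$ and $u\perp v$ already supply the triple additivity from which your squeeze extracts everything else. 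So your proposal does not prove the lemma as printed --- nothing can --- but it proves the corrected statement, by a shorter route than the paper's (pure subadditivity, no Exchange or Deletion Property), and the statement of the lemma should be weakened accordingly.
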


\begin{proof} Let $u = s_{i_1}\cdots s_{i_p}$, $v = s_{j_1}\cdots s_{j_q}$, and $w = s_{k_1}\cdots s_{k_r}$.

Let us assume that $\ell((uv)w) < \ell(uv) + \ell(w)$ or $\ell(u(vw))< \ell(u) + \ell(vw)$. Then by the Deletion Property, $uvw = s_{i_1}\cdots \widehat{s_\alpha} \cdots \widehat{s_{\beta}} \cdots s_{k_r}$. Since $\ell(uv) = \ell(u) + \ell(v)$ and $\ell(vw) = \ell(v) + \ell(w)$ then 1) either $\alpha \in \{i_1,\ldots, i_p\}$, $\beta \in \{j_1,\cdots, j_q\}$, or 2) $\alpha \in \{i_1,\ldots, i_p\}$, $\beta \in \{k_1,\cdots, k_r\}$, or 3) $\alpha \in \{j_1,\ldots, j_q\}$, $\beta \in \{k_1,\cdots, k_r\}$. On the other hand, $u,v,w$ are assumed to be reduced, hence we get a contradiction and the statement follows. 
\end{proof}

\begin{corollary}\label{perp2}
 If $uv \perp w$ and $u \perp v$, then $u \perp vw$ and $v \perp w$.
\end{corollary}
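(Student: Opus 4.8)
The plan is to sidestep the Deletion/Exchange machinery used in Lemma \ref{perp} and rely only on subadditivity of the length function, namely $\ell(xy) \le \ell(x) + \ell(y)$ for all $x,y \in \mathcal{W}$ (concatenating reduced expressions for $x$ and $y$ yields an expression for $xy$ of length $\ell(x)+\ell(y)$). The two hypotheses pin down $\ell(uvw)$ exactly, and a squeezing argument then forces both desired identities.

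First I would compute $\ell(uvw)$ from above using the hypotheses: since $uv \perp w$ we have $\ell(uvw) = \ell(uv) + \ell(w)$, and since $u \perp v$ this equals $\ell(u) + \ell(v) + \ell(w)$. Next I would bound $\ell(uvw)$ from below by regrouping the product as $u \cdot (vw)$ and applying subadditivity twice:
\[
 \ell(uvw) = \ell(u(vw)) \le \ell(u) + \ell(vw) \le \ell(u) + \ell(v) + \ell(w).
\]
Comparing with the exact value $\ell(uvw) = \ell(u) + \ell(v) + \ell(w)$ obtained above, both inequalities must in fact be equalities. The second equality gives $\ell(vw) = \ell(v) + \ell(w)$, that is $v \perp w$; feeding this back into the first forces $\ell(uvw) = \ell(u) + \ell(vw)$, that is $u \perp vw$.

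I do not expect any genuine obstacle here: the whole content is the observation that the hypotheses make $\ell(uvw)$ attain its maximal possible value $\ell(u)+\ell(v)+\ell(w)$, after which every intermediate grouping is forced to be reduced. The only point to state carefully is the subadditivity of $\ell$, which is immediate from concatenation of reduced words; in particular no appeal to the Deletion Property is needed, so this corollary is strictly easier than Lemma \ref{perp} rather than a consequence requiring its case analysis.
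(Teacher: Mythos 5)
Your argument is correct: the hypotheses give the exact value $\ell(uvw)=\ell(u)+\ell(v)+\ell(w)$, and since subadditivity bounds $\ell(uvw)$ above by $\ell(u)+\ell(vw)\le\ell(u)+\ell(v)+\ell(w)$, the squeeze forces both intermediate inequalities to be equalities, which is precisely $u\perp vw$ and $v\perp w$. The paper's proof starts from the same computation of $\ell(uvw)$ but then concludes by invoking Lemma \ref{perp}; read literally that citation is awkward, since Lemma \ref{perp} takes $u\perp v$ and $v\perp w$ as hypotheses, and $v\perp w$ is part of what the corollary is supposed to establish --- the intended reading is presumably that the concatenation of reduced words for $u,v,w$ is itself reduced and one then argues via the Deletion Property as in that lemma's proof. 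Your route avoids this entirely: it uses only the trivial inequality $\ell(xy)\le\ell(x)+\ell(y)$, needs no Deletion or Exchange Property, and so would hold verbatim in any monoid with a subadditive length function. You are also right that this makes the corollary genuinely easier than Lemma \ref{perp}, which cannot be obtained by subadditivity alone (there one needs a lower bound on $\ell(uvw)$, and that is where the Coxeter-specific Deletion Property really enters). Your version is the cleaner and more self-contained of the two.
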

\begin{proof}
 By $uv \perp w$, and $u \perp v$, $\ell(uvw) = \ell(u) + \ell(v) + \ell(w)$, and using Lemma \ref{perp}, the statement follows.
\end{proof}

\begin{definition}[{\cite[3.4]{BB}}]
 Let $(\mathcal{W,S})$ be a Coxeter system, $\mathfrak{R}(w)$ be the set of all reduced decompositions of an element $w$. The \textit{normal form} of an element $w \in \mathcal{W}$ is $\mathrm{min} \mathfrak{R}(w)$, where the minimal is taken with respect to lexicographic order. Denote by $\mathfrak{B}(\mathcal{W})$ the set of all elements $w\in \mathcal{W}$ such that $\mathrm{min}\,\mathfrak{R}(w) = w.$
\end{definition}

As before we denote by $\mathrm{NF}(w)$ a normal form of an element $w.$ We refer to \cite{BB} (especially section 3.4) for details and how to compute a normal form of an element of an arbitrary Coxeter group. However, just for reader convenience, we recall an elegant algorithm, see Appendix, to compute a normal form for any Coxeter group. This algorithm is based on so-called ``the numbers game''  (see \cite[4.3]{BB}) offers a general method for finding combinatorial representatives of the group elements. 

\subsubsection{Artin--Tits monoids}

\begin{definition}
Let $(\mathcal{W,S})$ be a Coxeter system given by a Coxeter matrix $\mathsf{M}$, an {\it Artin-Tits monoid} associated with $(\mathcal{W,S})$, is a monoid, denoted by $B^+(\mathcal{W,S})$ or shortly $B^+$, admits the presentation
\[
 B^+(\mathcal{W,S}):=\left< \mathcal{S}\, \bigl|\, \langle a ,b\rangle^{\mathsf{m}(a,b)} = \langle b, a \rangle^{\mathsf{m}(b,a)} \mbox{ whenever $\mathsf{m}(a,b) \ne \infty$} \bigr.\right>
\]
here $\mathsf{m}(a,b)$ are elements of the Coxeter matrix $\mathsf{M}$. If $\mathsf{m}(a,b)=\infty$, then there is no relation for $a$ and $b$.
\end{definition}

Let us turn to the preorder $\gtrapprox$ (Definition \ref{order}). It is clear the $E_{\mathrm{NF}(u)} \gtrapprox E_{\mathrm{NF}(v)}$ if and only if $\ell(u) < \ell(v)$.

\begin{proposition}[{cf. \cite[B, IX.1.3, Proposition 1.35 ]{Dehbook}}]\label{artin-GSB}
 Let $(\mathcal{W,S})$ be a Coxeter system. Consider the corresponding Artin-Tits monoid $B^+(\mathcal{W,S})$. Let $P: B^+ \to \mathcal{W}$ be the corresponding surjective, and $E:\mathcal{W} \to B^+$ its sections which is defined as follows: any $s \in \mathcal{S}$ maps to the same $s$ in $B^+.$ Define the corresponding germ $\Upsilon = \Upsilon_E(B^+,\mathcal{W},P,\mathfrak{B}(\mathcal{W}))$ as follows 
\[
 \Upsilon \coloneqq \mathsf{Smg} \bigl \langle E_{\mathrm{NF}(w)},\, w \in \mathcal{W}\mid E_{\mathrm{NF}(u)} \bullet E_{\mathrm{NF}(v)}\coloneqq  E_{\mathrm{NF}(uv)}, \mbox{whenever $u \perp v$}  \bigr\rangle.
\]

Then the germ $\Upsilon$ is a Garside germ, and
\[
 B^+(\mathcal{W,S}) \cong \mathsf{Smg} \langle E_{\mathrm{NF}(w)},\, w \in \mathcal{W}\, |\, E_{\mathrm{NF}(u)}E_{\mathrm{NF}(vw)} = E_{\mathrm{NF}(uv)}E_{\mathrm{NF}(w)},\, u\perp v \perp w  \rangle,
\]
and finally a basis of $B^+(\mathcal{W,S})$ can be described as the following set
\[
 \bigcup_{\substack{u, v \ldots, w \in \mathcal{W} \\ u \angle v \angle \cdots \angle w }} \{ E_{\mathrm{NF}(\pi_1)}E_{\mathrm{NF}(\pi_2)} \cdots E_{\mathrm{NF}(\pi_k)}\}.
\]
\end{proposition}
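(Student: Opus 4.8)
The plan is to obtain this proposition as a direct application of Theorem \ref{The_Main_Result} to the germ $\Upsilon = \Upsilon_E(B^+,\mathcal{W},P,\mathfrak{B}(\mathcal{W}))$, so the first task is to check the two standing hypotheses of that theorem: that $\Upsilon$ is left-cancellative and left-associative. Left-cancellativity is immediate, since $E_{\mathrm{NF}(u)}\bullet E_{\mathrm{NF}(v)} = E_{\mathrm{NF}(u)}\bullet E_{\mathrm{NF}(v')}$ with $u\perp v$ and $u\perp v'$ forces $\mathrm{NF}(uv)=\mathrm{NF}(uv')$, hence $uv=uv'$ in $\mathcal{W}$ and $v=v'$ by cancellativity of the group $\mathcal{W}$. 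Left-associativity is exactly Corollary \ref{perp2}: if $(E_{\mathrm{NF}(u)}\bullet E_{\mathrm{NF}(v)})\bullet E_{\mathrm{NF}(w)}$ is defined, i.e.\ $u\perp v$ and $uv\perp w$, then $v\perp w$, so $E_{\mathrm{NF}(v)}\bullet E_{\mathrm{NF}(w)}$ is defined. The germ associativity axiom (Definition \ref{def_of_germ}(3)) is supplied in the same spirit by Lemma \ref{perp}, which guarantees that $u\perp v$ and $v\perp w$ force \emph{both} $uv\perp w$ and $u\perp vw$, both products then equalling $E_{\mathrm{NF}(uvw)}$.

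Second, I would record that $B^+$ has a positive presentation and hence no nontrivial invertible elements, $(B^+)^\times=\{1\}$. Consequently the congruence $\thickapprox$ (deformation of paths by invertibles) collapses to plain equality, so that ``resolvable up to $\thickapprox$'' in Theorem \ref{The_Main_Result} becomes ordinary confluence and we are in the classical Composition--Diamond setting of Remark \ref{def_CD=CD}. Thus, by Theorem \ref{The_Main_Result}, the Garside claim reduces to a single point: that every ambiguity of the reduction system $\overline{S}_\Upsilon(\mathfrak{B}(\mathcal{W}))$ is resolvable.

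Third, and this is the heart of the argument, I would establish that resolvability. Unwinding Definition \ref{reducible_elements}, a pair $E_{\mathrm{NF}(\pi)}E_{\mathrm{NF}(\pi')}$ is reducible precisely when some left descent of $\pi'$ fails to be a right descent of $\pi$, so that a nonempty head can be absorbed on the left; the irreducible (hence $\Upsilon$-normal) pairs are exactly the greedy ones, in which every left descent of $\pi'$ is a right descent of $\pi$. For a length-three path $E_{\mathrm{NF}(u)}E_{\mathrm{NF}(v)}E_{\mathrm{NF}(w)}$ one must show that the two reductions $\mathfrak{r}_{u,v}$ and $\mathfrak{r}_{v,w}$ converge; the bookkeeping of which products remain defined along each branch is handled by Lemma \ref{perp} and Corollary \ref{perp2}, but the genuinely substantive input is that the \emph{maximal compatible head} is well defined: among the prefixes $\gamma$ of $v$ (in the right weak order) with $u\perp\gamma$ there is a greatest one. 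I expect this to be the main obstacle, and I would derive it from the lattice structure of the weak order on $\mathcal{W}$ together with the Exchange/Deletion property, concretely that $u\perp\gamma_1$ and $u\perp\gamma_2$ with $\gamma_1,\gamma_2\preccurlyeq v$ imply $u\perp(\gamma_1\vee\gamma_2)$, the join again being a prefix of $v$. This is the content behind the cited \cite[B, IX.1.3, Proposition 1.35]{Dehbook}; with it, local confluence of $\overline{S}_\Upsilon(\mathfrak{B}(\mathcal{W}))$ follows and Theorem \ref{The_Main_Result} yields that $\Upsilon$ is a Garside germ.

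Finally, I would read off the remaining two assertions from the structural part of Theorem \ref{The_Main_Result}. The family $\mathfrak{B}(\mathcal{W})$ contains $1$ and every generator $s\in\mathcal{S}$, and it is closed under right-divisor, since a reduced suffix of a reduced word is reduced and its normal form again lies in $\mathfrak{B}(\mathcal{W})$; hence it is a solid generating family and $B^+\cong\mathsf{Cat}\langle\Upsilon\rangle$. The defining relations $\mathcal{R}_\bullet$, namely $E_{\mathrm{NF}(u)}E_{\mathrm{NF}(v)}=E_{\mathrm{NF}(uv)}$ for $u\perp v$, are equivalent to the symmetric family $E_{\mathrm{NF}(u)}E_{\mathrm{NF}(vw)}=E_{\mathrm{NF}(uv)}E_{\mathrm{NF}(w)}$ for $u\perp v\perp w$ (the forward implication uses $v\perp w$, the converse specialises $w=1$), which gives the stated presentation. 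For the basis, Theorem \ref{The_Main_Result} identifies $\mathrm{Irr}(\overline{S}_\Upsilon(\mathfrak{B}(\mathcal{W})))$ with the set of $\Upsilon$-normal words; by Proposition \ref{J=invert} and Proposition \ref{greedy} these are exactly the chains $E_{\mathrm{NF}(\pi_1)}\cdots E_{\mathrm{NF}(\pi_k)}$ admitting no further leftward absorption, so in particular $\pi_i\angle\pi_{i+1}$ for each $i$, which is the displayed description of the basis.
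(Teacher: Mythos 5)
Your overall architecture coincides with the paper's: Lemma \ref{perp} and Corollary \ref{perp2} supply the germ axiom, left-cancellativity (from cancellation in the group $\mathcal{W}$) and left-associativity; then Theorem \ref{The_Main_Result} gives the Garside property from confluence of $\overline{S}_\Upsilon(\mathfrak{B}(\mathcal{W}))$; and the isomorphism plus the basis description are read off from Proposition \ref{Garside_germ=cat} and the irreducible words. Your observation that $(B^+)^\times=\{1\}$, so that $\thickapprox$ collapses to equality, is correct and worth making explicit (the paper leaves it implicit in its diagrams).

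The one place you genuinely diverge is the confluence step, and it is also where your argument is not actually carried out. The paper resolves the overlap ambiguities $(\mathfrak{r}_{u,vw},\mathfrak{r}_{vw,hg},E_{\mathrm{NF}(u)},E_{\mathrm{NF}(vw)},E_{\mathrm{NF}(hg)})$ by a short direct chase: from $v\perp w$ and $vw\perp h$, Corollary \ref{perp2} yields $w\perp h$ and $v\perp wh$, after which both branches terminate in one further step at $E_{\mathrm{NF}(uv)}E_{\mathrm{NF}(wh)}E_{\mathrm{NF}(g)}$ --- no lattice-theoretic input about the weak order is needed. You instead reduce confluence to the existence of a greatest compatible head, resting on the join-compatibility claim that $u\perp\gamma_1$ and $u\perp\gamma_2$ with $\gamma_1,\gamma_2\preccurlyeq v$ force $u\perp(\gamma_1\vee\gamma_2)$. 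That claim is true, but it is itself a nontrivial fact about Coxeter groups (essentially the content of the external result you cite, provable via inversion sets rather than directly from Exchange/Deletion), and you only announce that you ``would derive'' it; as written this is the single real gap in the proposal. To be fair, your route, if completed, would also handle the situation of two distinct reductions with the same source pair $E_{\mathrm{NF}(u)}E_{\mathrm{NF}(vw)}$ absorbing different heads, which is not literally one of the overlap/inclusion ambiguities the paper enumerates; so your approach buys some extra robustness at the price of substantially heavier machinery, whereas the paper's is elementary and self-contained. Either finish the join argument or replace it by the paper's two-line use of Lemma \ref{perp} and Corollary \ref{perp2}.
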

\begin{proof}
Let us prove that the germ $\Upsilon$ is a Garside germ.

First of all we have to show that $\Upsilon$ is a left-cancellative and left-associative germ.
 
(1) Let $E_u \bullet E_v, E_v \bullet E_w \in \Upsilon$, then $u\perp v$ and $v \perp w$. By Lemma \ref{perp}, $E_u\bullet (E_v \bullet E_w)\in \Upsilon$ if and only if $(E_u \bullet E_v) \bullet E_w \in \Upsilon$, and if so then they are equal.

(2) Since $\mathcal{W}$ is assumed to be a group then $\Upsilon$ is left and right-cancellative. Next, by Corollary \ref{perp2}, $\Upsilon$ is left-associative.

(3) By Construction \ref{reduction_system} we have the following set of reductions 
\[
 \overline{S}_\Upsilon(\mathfrak{B}(\mathcal{W})) = \bigcup_{u\perp v \perp w}\{\mathfrak{r}_{u,vw}: E_{\mathrm{NF}(u)}E_{\mathrm{NF}(vw)} \to E_{\mathrm{NF}(uv)}E_{\mathrm{NF}(w)}\}.
\]
 It is clear that all possible ambiguities are $(\mathfrak{r}_{u,vw}, \mathfrak{r}_{vw,hg}, E_{\mathrm{NF}(u)},E_{\mathrm{NF}(v w)}, E_{\mathrm{NF}(h g)})$ where $u \perp v \perp w$ and $v w \perp h \perp g$. 

By Corollary \ref{perp2}, $w \perp h$, $v \perp w h$ and we get
\[
 \xymatrix{
 & E_{\mathrm{NF}(uv)}E_{w}E_{\mathrm{NF}(hg)} \ar@{->}[r]^{\mathfrak{r}_{w,hg}}& E_{\mathrm{NF}(uv)}E_{\mathrm{NF}(wh)}E_g  \\
 E_uE_{\mathrm{NF}(vw)}E_{\mathrm{NF}(hg)} \ar@{->}[ru]^{\mathfrak{r}_{u,vw}} \ar@{->}[rd]_{\mathfrak{r}_{vw,hg}} &&&\\
 & E_u E_{\mathrm{NF}(vwh)}E_g \ar@{->}[r]_{\mathfrak{r}_{uv,wh}} & E_{\mathrm{NF}(uv)}E_{\mathrm{NF}(wh)}E_g
 }
\]

Thus all ambiguities of $\overline{S}_\Upsilon(\mathfrak{B}(\mathcal{W}))$ are resolvable and hence by Theorem \ref{The_Main_Result} the first statement follows. 

Next, it is easy to see that the set $\cup_{w \in \mathcal{W}}\{E_{\mathrm{NF}(w)}\}$ is a generating family for $B^+(\mathcal{W,S})$. Hence, by Proposition \ref{Garside_germ=cat}, the second statement follows.

Finally, by Theorem \ref{The_Main_Result}, the last statement follows.
\end{proof}

It is clear that if a Coxeter group $\mathcal{W}$ is finite then the corresponding preorder $\gtrapprox$ has the descending chain condition, thus, by Corollary \ref{GSB=greedy}, the corresponding $\Upsilon$-normal form of an Artin monoid is exactly the corresponding Gr\"obner--Shirshov normal form.

\subsection{A Greedy normal form on the braid monoids $B_n^+$}~\\

Let us consider the partial case $\mathcal{W} = \mathfrak{S}_n$ (= the symmetric group). It is well known that a symmetric group has the following Coxeter presentation.

\begin{proposition}[{\cite{BS01,BBCM}}]
Let $\mathcal{S} = \{s_1, \ldots, s_{n-1}\}$ be the set of generators (transpositions) of the symmetric group $\mathfrak {S}_n$. Set $s_i >s_j$ whenever $i>j$ and consider the corresponding deg-lex ordering $>$ on the free monoid generated by $s_1,\ldots, s_{n-1}$. 

A Gr\"obner--Shirshov basis for the symmetric group $\mathfrak{S}_{n}$, with respect to the order $>$, is the following set of relations:
\begin{enumerate}
    \item $s_i^2 = 1,$ for any $1 \le i \le n$ 
    \item $s_is_j = s_js_i,$ for $i-j \ge 2$ and $1 \le i,j \le n,$
    \item $s_{i+1}s_is_{i-1} \cdots s_js_{i+1} = s_is_{i+1}s_is_{i-1}\cdots s_j,$ if $i+1 \ge j$ and $1\le i,j \le n$.
\end{enumerate}
\end{proposition}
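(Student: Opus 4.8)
The plan is to apply the classical Composition--Diamond lemma, that is, Theorem~\ref{CD-lemma} with $\thickapprox$ taken to be literal equality (Remark~\ref{def_CD=CD}), to the reduction system $S$ obtained by orienting each of the three families toward its deg-lex-smaller side. Under the order induced by $s_1 < \cdots < s_{n-1}$ the leading words are $s_i^2$ in family (1) (reducing to $1$, the only length-decreasing rule), $s_is_j$ with $i-j\ge 2$ in family (2) (reducing to $s_js_i$), and $s_{i+1}s_is_{i-1}\cdots s_js_{i+1}$ in family (3) (reducing to $s_is_{i+1}s_is_{i-1}\cdots s_j$); the latter two preserve length and decrease the word lexicographically. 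Because deg-lex is a well-order on the free monoid over a finite alphabet, $\lessapprox$ is compatible with $S$ and satisfies the descending chain condition, so the hypotheses of Theorem~\ref{CD-lemma} are met. As a preliminary step I would check that $S$ defines the same congruence as the Coxeter presentation: families (1) and (2) are among the defining relations, while each relation of family (3) follows from the braid relation $s_{i+1}s_is_{i+1}=s_is_{i+1}s_i$ (which is the case $j=i$) by commuting $s_{i+1}$ past each of $s_{i-1},\dots,s_j$, all of which commute with it by family (2).

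The heart of the proof is to verify that every overlap and inclusion ambiguity of $S$ is resolvable; by Theorem~\ref{CD-lemma} this is equivalent to $(S)$ being a Gr\"obner--Shirshov basis. I would organize the work by the pair of families meeting at the overlap. First note that no leading word of family (1) or (2) occurs inside a leading word of family (3), and that two family-(3) leading words can overlap only in the single repeated letter $s_{i+1}$; this already trivializes the inclusion ambiguities and limits the overlaps to a short explicit list. The easy cases are the self-overlap $s_is_is_i$ of (1), and the overlaps of (1) with (2), which close after one or two reductions. The substantial cases all involve family (3): its overlap with (1) on the word $s_{i+1}s_{i+1}s_i\cdots s_js_{i+1}$, its overlaps with (2), and above all its self-overlap on $s_{i+1}s_i\cdots s_js_{i+1}s_i\cdots s_ls_{i+1}$. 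In each of these I would reduce the overlap word along both rules and drive the two outcomes to a common irreducible word, repeatedly using family (2) to transport a single $s_{i+1}$ rightward across a descending block and family (3) to absorb a trailing repeated generator; the expectation is that every resulting diamond closes.

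The final step is to confirm that no relations have been lost, which I would do by counting $\mathrm{Irr}(S)$ and comparing with $|\mathfrak{S}_n|=n!$. A word is irreducible exactly when it contains no square $s_i^2$, no factor $s_is_j$ with $i-j\ge 2$, and no factor $s_{i+1}s_i\cdots s_js_{i+1}$; a direct combinatorial analysis shows these are precisely the products $u_1u_2\cdots u_{n-1}$ in which each $u_k$ is either empty or a descending run $s_ks_{k-1}\cdots s_{j_k}$ with $1\le j_k\le k$. Since $u_k$ admits $k+1$ choices, there are $\prod_{k=1}^{n-1}(k+1)=n!$ irreducible words. Hence $\mathrm{Irr}(S)$ is a $\mathbb{K}$-basis of $\mathbb{K}\mathfrak{S}_n$ of the correct size, so $S$ is confluent and presents $\mathfrak{S}_n$ with no missing relations.

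I expect the main obstacle to be the self-overlap of family (3). Because these relations have unbounded length, a single overlap glues two descending runs of a priori different lengths at the repeated letter $s_{i+1}$, and closing the diamond requires an inductive bookkeeping of the commutations supplied by family (2) while checking at each stage that no leading word outside the three families is ever created. Managing this length-dependent case uniformly in $i,j,l$ is where the real care is needed.
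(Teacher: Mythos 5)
The paper offers no proof of this proposition; it is quoted from \cite{BS01,BBCM}, so the only benchmark is the argument in those references, which is exactly the route you outline: orient the relations by deg-lex and resolve all compositions. Your preliminary observations are all correct --- the identification of the leading words; the fact that family (3) follows from the braid and commutation relations (so $I(S)\subseteq I(R)$, while the reverse inclusion is immediate because families (1), (2) and the $j=i$ instances of (3) already are the Coxeter relations); the classification of the ambiguities (two family-(3) leading words can only share the single letter $s_{i+1}$, and there are no inclusion ambiguities); and the description of $\mathrm{Irr}(S)$ as the $n!$ products of descending runs. The genuine shortfall is that the resolutions themselves, above all the self-overlap of family (3) on $s_{i+1}s_i\cdots s_js_{i+1}s_i\cdots s_ls_{i+1}$, are only promised, and you yourself identify them as the entire computational content; as written this is an outline rather than a proof (you also omit the $(2)$--$(2)$ overlaps $s_as_bs_c$ with $a-b\ge 2$, $b-c\ge 2$ and one of the two $(1)$--$(3)$ overlaps from your list).

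Note, however, that your closing counting step, made rigorous, renders all of those computations unnecessary. Since $I(S)=I(R)$ and deg-lex has the descending chain condition, every word reduces to some element of $\mathrm{Irr}(S)$, so $\mathrm{Irr}(S)$ spans $\mathbb{K}\langle s_1,\ldots,s_{n-1}\rangle/I(S)\cong \mathbb{K}\mathfrak{S}_n$, a free $\mathbb{K}$-module of rank $n!$; a spanning family of cardinality $n!$ in such a module is a basis. That is precisely condition (4) of Theorem \ref{CD-lemma} (with $\thickapprox$ taken to be equality, Remark \ref{def_CD=CD}), and its equivalence with conditions (1) and (2) then yields the resolvability of every ambiguity for free. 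The only statement you must then actually prove is the characterization of $\mathrm{Irr}(S)$: writing an irreducible word as a concatenation of maximal descending runs $s_{b_p}\cdots s_{c_p}$, the absence of the length-two leading words forces $c_p<b_{p+1}$ at each junction, and if some $b_{p+1}\le b_p$ held, the forbidden family-(3) factor $s_{b_{p+1}}s_{b_{p+1}-1}\cdots s_{c_p}s_{b_{p+1}}$ would occur; hence $b_1<b_2<\cdots$ and the count $\prod_{k=1}^{n-1}(k+1)=n!$ follows. I would either restructure the proof around this observation or actually carry out the overlap computations; at present you do neither in full.
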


As a consequence, using the Composition--Diamond lemma, we obtain the following

\begin{corollary}[\cite{BS01}]
 The set 
 $\mathfrak{B}(\mathfrak {S}_n)\coloneqq \{s_{1i_1}s_{2i_2}\cdots s_{ni_n} \mid
 i_k \le k+1\}$ 
 consists of Gr\"obner--Shirshov normal forms for $\mathfrak {S}_n$ in the generators 
$s_i\coloneqq (i,i+1)$ relative to the deg-lex ordering, where 
$s_{\alpha \beta}\coloneqq s_\beta s_{\beta-1}\cdots s_\alpha$ 
for 
$\beta \ge \alpha$ 
and 
$s_{\beta, \beta+1}\coloneqq 1$.
\end{corollary}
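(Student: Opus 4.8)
The plan is to deduce the corollary directly from the Composition--Diamond lemma applied to the Gr\"obner--Shirshov basis $S$ produced by the preceding Proposition, whose leading monomials (relative to the deg-lex order) are $s_i^2$, the words $s_is_j$ with $i-j\ge 2$, and the braid-type words $s_{i+1}s_is_{i-1}\cdots s_js_{i+1}$. By Theorem \ref{CD-lemma} together with Remark \ref{def_CD=CD} (the classical $=$-version of the lemma), the set $\mathrm{Irr}(S)$ of words avoiding all of these leading monomials as factors is a $\mathbb{K}$-basis of $\mathbb{K}\mathfrak{S}_n$, hence a complete and irredundant system of normal forms for $\mathfrak{S}_n$. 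The entire task therefore reduces to the combinatorial identification $\mathrm{Irr}(S)=\mathfrak{B}(\mathfrak{S}_n)$.

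First I would prove the inclusion $\mathfrak{B}(\mathfrak{S}_n)\subseteq\mathrm{Irr}(S)$. Each factor $s_{k i_k}$ is a run of consecutive generators whose index decreases by exactly one at each step, so neither $s_i^2$ nor $s_is_j$ with $i-j\ge 2$ can occur within a single factor; and since successive factors are joined along a strict increase of the index, these two patterns are excluded at the block junctions as well. To exclude a factor of braid type $s_{i+1}s_i\cdots s_js_{i+1}$ one uses that the maximal descending runs of such a word are exactly its nonempty blocks, whose tops strictly increase from block to block; hence a descending run is never immediately followed by a repetition of its own top, which is precisely what the admissible range $i_k\le k+1$ encodes. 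I expect the careful, exhaustive verification for this third leading monomial---checking it both inside a single block and across the junction of two consecutive blocks---to be the main obstacle, since it is here that the arithmetic of the index bound is genuinely used.

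For the converse inclusion I would argue by counting rather than by a second structural induction. The $k$-th block admits exactly $k+1$ admissible values of $i_k$ (with $s_{k,k+1}=1$ the empty block), and distinct tuples $(i_1,\dots,i_{n-1})$ give distinct words, since the maximal descending-run decomposition of a word in $\mathfrak{B}(\mathfrak{S}_n)$ recovers each block and hence the whole tuple. Therefore
\[
 |\mathfrak{B}(\mathfrak{S}_n)|=\prod_{k=1}^{n-1}(k+1)=n!=|\mathfrak{S}_n|=|\mathrm{Irr}(S)|,
\]
the last equality because $\mathrm{Irr}(S)$ is a $\mathbb{K}$-basis of $\mathbb{K}\mathfrak{S}_n$. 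A subset of cardinality $n!$ of a finite set of cardinality $n!$ exhausts it, so $\mathfrak{B}(\mathfrak{S}_n)=\mathrm{Irr}(S)$, which is exactly the assertion that $\mathfrak{B}(\mathfrak{S}_n)$ is the set of Gr\"obner--Shirshov normal forms for $\mathfrak{S}_n$. If one prefers to avoid the cardinality count, the inclusion $\mathrm{Irr}(S)\subseteq\mathfrak{B}(\mathfrak{S}_n)$ can be obtained directly: avoidance of the first two leading monomials forces any irreducible word to be a concatenation of descending runs, and avoidance of the braid-type monomial then constrains the succession of run-tops so that, after reindexing, the word is precisely of the form $s_{1i_1}s_{2i_2}\cdots s_{n-1,i_{n-1}}$ with $i_k\le k+1$.
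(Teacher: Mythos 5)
Your proposal is correct and follows the same route the paper (implicitly) takes: the paper derives the corollary in one line as a consequence of the Composition--Diamond lemma applied to the Gr\"obner--Shirshov basis of the preceding proposition, and your argument simply fills in the combinatorial identification of $\mathrm{Irr}(S)$ with $\mathfrak{B}(\mathfrak{S}_n)$ (descending runs with strictly increasing tops, plus the cardinality count $\prod_{k=1}^{n-1}(k+1)=n!$), which is exactly the content delegated to \cite{BS01}. The only caution is notational: the corollary's index conventions ($s_{\alpha\beta}=s_\beta\cdots s_\alpha$ for $\beta\ge\alpha$ versus $s_{\beta,\beta+1}=1$ and the bound $i_k\le k+1$) are not fully consistent as printed, and your proof correctly works with the intended reading in which the $k$-th block is a descending run with top determined by $k$ and admits $k+1$ choices.
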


Let us consider a braid monoid $B_n^{+}$, \textit{i.e.,} a monoid generated by $\sigma_1,\ldots, \sigma_n$; its elements are called positive braids. We have a homomorphism $P: B_n^+ \to \mathfrak{S}_n$; given a positive braid $B$, the strands define a permutation $p(B)$ from the top set of endpoints to the bottom set of endpoints.

Take $\pi \in \mathfrak {S}_n$ with the normal form $\mathrm{NF}(\pi) = s_{1i_1}s_{2i_2}\cdots s_{mi_m} \in \mathrm{NF}(\mathfrak{S}_n)$, \textit{i.e.,} 
\[
 \mathrm{NF}(\pi) = (s_{i_1}s_{i_1-1}\cdots s_1) (s_{i_2}s_{i_2-1}\cdots s_2) \cdots (s_{i_m}s_{i_m-1}\cdots s_{m})
\]
and, as above, set $\ell(\pi) :=  \ell(\mathrm{NF}(\pi))$ (= the length). Define then a map $E:\mathfrak{S}_n \to B_n^+$ as follows
\[
 E(\mathrm{NF}(\pi)) = (\sigma_{i_1}\sigma_{i_1-1}\cdots \sigma_1) (\sigma_{i_2}\sigma_{i_2-1}\cdots \sigma_2) \cdots (\sigma_{i_m}\sigma_{i_m-1}\cdots \sigma_{m}),
\]
it is clear that $E$ is a section for $P$.

Next, as above, we see that the corresponding germ $\Upsilon=\Upsilon_E(B_n^+,\mathfrak{S}_n,P,\mathfrak{B}(\mathfrak{S}_n))$ can be also described as follows
\[
 \Upsilon = \bigl\{ E_{\mathrm{NF}(\pi)},\, \pi \in \mathfrak {S}_n  \mid E_{\mathrm{NF}(\pi)} \bullet E_{\mathrm{NF}(\tau)}:= E_{\mathrm{NF}(\pi \tau)}, \mbox{ whenever $\pi \perp \tau$} \bigr\}.
\]

Thus, by Proposition \ref{artin-GSB}, \textit{the corresponding set of reductions 
\[
\overline{S}_\Upsilon(\mathfrak{B}(\mathfrak{S}_n)) = 
\bigcup_{\pi,\tau, \xi \in \mathfrak {S}_n}\{ \mathfrak{r}_{\pi,\tau\xi}: E_{\mathrm{NF}(\pi)} E_{\mathrm{NF}(\tau \xi)}\to E_{\mathrm{NF}(\pi \tau)}E_{\mathrm{NF}(\xi)}, \, \pi \perp \tau \perp \xi\},
\]
is resolvable and, by Theorem \ref{CD-lemma}, the corresponding set of polynomials
\[
(\overline{S}_\Upsilon(\mathfrak{B}(\mathfrak{S}_n)))= \bigcup_{\pi,\tau, \xi \in \mathfrak {S}_n}\{ \mathfrak{r}_{\pi,\tau\xi}: E_{\mathrm{NF}(\pi)} E_{\mathrm{NF}(\tau \xi)} - E_{\mathrm{NF}(\pi \tau)}E_{\mathrm{NF}(\xi)}, \, \pi \perp \tau \perp \xi\},
\]
is a Gr\"obner--Shirshov basis relative to the $\gtrapprox$, and then the corresponding Gr\"obner--Shirshov normal form is
normal form is exactly the greedy normal form, \textit{i.e.,} the set $\mathrm{Irr}(\overline{S}_\Upsilon(\mathfrak{B}(\mathfrak{S}_n)))$ of irreducible elements under the $\overline{S}_\Upsilon(\mathfrak{B}(\mathfrak{S}_n))$
\[
 \mathrm{Irr}(\overline{S}_\Upsilon(\mathfrak{B}(\mathfrak{S}_n))):=\bigcup_{\substack{\pi_1, \ldots, \pi_k \in \mathfrak{S}_n \\ \pi_1 \angle \pi_2 \angle \cdots \angle \pi_k }} \{ E_{\mathrm{NF}(\pi_1)}E_{\mathrm{NF}(\pi_2)} \cdots E_{\mathrm{NF}(\pi_k)}\},
\]
coincides with the set of the greedy normal form of elements of $B_n^+$. Finally for any $\pi \in \mathfrak{S}_n$, the element $E_{\mathrm{NF}(\pi)}$ coincides with the Adjan--Thurston generator $R_\pi.$
}

\begin{remark}
In \cite[A, VI, Example 2.72]{Dehbook} it was shown how to obtain 
the braid monoid (and greedy normal form) via the symmetric group. This approach is very similar to this way. In particular the elements of $\pi,\tau \in \mathfrak {S}_n$ are called {\em tight}\index{Group!braid!tight elements} if $\ell(\pi \tau) = \ell(\pi) + \ell(\tau)$ (in a sense described there).
\end{remark}

\begin{remark}
In \cite[3.1.4]{BokSurv} it was said that the corresponding set of polynomials
\[
(\overline{S}_\Upsilon(\mathfrak{B}(\mathfrak{S}_n))) = \cup_{\pi \perp \tau \perp \xi}\{E_{\mathrm{NF}(\pi)} E_{\mathrm{NF}(\tau \xi)}- E_{\mathrm{NF}(\pi \tau)}E_{\mathrm{NF}(\xi)}\}
\]
is a Gr\"obner--Shirshov basis relative to the following ordering $\le$. We assume that $s_1<s_2<\cdots <s_{n-1}$ and define $E_{\mathrm{NF}(\pi)} \le E_{\mathrm{NF}(\tau)}$ if and only if $\ell(\mathrm{NF}(\pi)) > \ell(\mathrm{NF}(\tau))$ or $\ell(\mathrm{NF}(\pi)) = \ell(\mathrm{NF}(\tau))$ and $\mathrm{NF}(\pi) < \mathrm{NF}(\tau)$ (lexicographical order). It is easy to see that this order is an extension of the preorder $\gtrapprox.$
\end{remark}

\section*{Appendix: The Number Game}

Let us recall the ``number game'' that first appeared in a somewhat restricted version related to Kac--Moody Lie algebras in \cite{M} and the general version given in \cite{BB} that is due to Eriksson \cite{E}. We refer to \cite[Ch.I,4]{BB} for more details.

Let $(\mathcal{W,S})$ be a Coxeter system given by a Coxeter matrix $\mathsf{M}$. Define a function $\kappa:\mathcal{ S \times S} \to \mathbb{R}$ as follows;
\begin{equation}\label{k}
 \begin{cases}
   \begin{cases}
     \kappa_{s,s} = -2, & s \in \mathcal{S};\\
     \kappa_{a,b} = 0, & \mathsf{m}(a,b) = 2,
   \end{cases}, & \mathsf{m}(-,=) <3,\\
   \begin{cases}
     \kappa_{a,b} >0,\\
     \kappa_{a,b}\kappa_{b,a} = 4 \cos^2 \dfrac{\pi}{\mathsf{m}(a,b)}, & \mathsf{m}(a,b) \ne \infty,\\
     \kappa_{a,b}\kappa_{b,a} \ge 4, & \mathsf{m}(a,b) = \infty
   \end{cases}, & \mathsf{m}(-,=) \ge 3
 \end{cases}
\end{equation}

Now we present (see \cite[I.4.3]{BB}) the Number Game to calculate normal form. We label each node of the corresponding Coxeter graph with some real numbers, and each such assignment thought of as a position in a certain ``game''. The ``moves'' in the game are local rearrangements of the assigned values at a chosen node $s$ and its neighbors, governed by the labels of the edges surrounding in $s$ in the Coxeter graph. The point of this game is that it gives a combinatorial model of the Coxeter group, where group elements correspond to positions and reduced decompositions correspond to play sequences.

The starting position for the game can be any distribution $\mathcal{S} \ni s \to p_s \in \mathbb{R}$ or real numbers $p_s$ to the nodes $s\in \mathcal{S}$ of the Coxeter graph. A position is called positive if $p_s>0$ for all $s \in \mathcal{S}$. The special position with $p_s=1$ for all $s \in \mathcal{S}$ is called the unit position and denoted by $\mathbf{1}.$

Next, moves are defined as follows. A \textit{firing of node} $s$ changes a position $p \in \mathbb{R}^\mathcal{S}$ in the following way
\begin{enumerate}
    \item Switch sign of the value at $s$.
    \item Add $k_{s,a}p_a$ to the value at each neighbor $a$ of $s$.
    \item Leave all other values unchanged.
\end{enumerate}

Such a move is called \textit{positive} if $p_s>0$, and \textit{negative} if $p_s<0$. A \textit{positive game} is one that is played with positive moves from a given starting position, and similarly for a negative game. A \textit{play sequence} is a word $s_{i_1}s_{i_2}\cdots s_{i_k}$, $s_{i_j} \in \mathcal{S}$, recording a game in which $s_{i_1}$ was fired first, then $s_{i_2}$, then $s_{i_3}$, and so on. Similarly, a \textit{positive play sequence} records a positive game and a \textit{negative play sequence} records a negative game.

\begin{theorem}[{\cite[Theorem 4.3.1]{BB}}]
 Let $p \in \mathbb{R}^\mathcal{S}$ be a starting position, $s_{i_1},\ldots, s_{i_k}$ a play sequence, denote by $p^{s_{i_1}\cdots s_{i_k}}$ a position reached from $p$ by this play sequence. Let $\mathfrak{P}_p \subseteq \mathbb{R}^\mathcal{S}$ denote the set of all positions that can be reached this way.
 \begin{itemize}
     \item Two play sequences $s_{i_1}s_{i_2}\cdots s_{i_p}$ and $s_{j_1}s_{j_2}\cdots s_{j_q}$ lead to the same position if and only if $s_{i_1}s_{i_2}\cdots s_{i_p}=s_{j_1}s_{j_2}\cdots s_{j_q}$ as elements of $\mathcal{W}$.
     \item The induced mapping $w \mapsto p^w$ is a bijection $\mathcal{W} \to \mathfrak{P}_p$.
     \item The play sequence $s_{i_1}s_{i_2}\cdots s_{i_k}$ is positive if and only if $s_{i_1}s_{i_2}\cdots s_{i_k}$ is a reduced decomposition.
 \end{itemize}
\end{theorem}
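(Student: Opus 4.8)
The plan is to realize the firing moves as a linear action of $\mathcal{W}$ on $\mathbb{R}^{\mathcal{S}}$ and then to read off all three assertions from the theory of the geometric (reflection) representation. First I would rewrite the firing of a node $s$ as the linear operator $\sigma_s:\mathbb{R}^{\mathcal{S}}\to\mathbb{R}^{\mathcal{S}}$ given by $(\sigma_s p)_a = p_a + \kappa_{s,a}\,p_s$ for every $a\in\mathcal{S}$: the normalization $\kappa_{s,s}=-2$ reproduces the sign switch at $s$, while $\kappa_{s,a}=0$ for $\mathsf{m}(s,a)=2$ guarantees that only the neighbours of $s$ are altered, so this single formula encodes rules (1)--(3). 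A one-line computation gives $\sigma_s^2=\mathrm{id}$, which is why a fired node can always be un-fired.

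Next I would verify the braid relations $\underbrace{\sigma_s\sigma_t\sigma_s\cdots}_{\mathsf{m}(s,t)} = \underbrace{\sigma_t\sigma_s\sigma_t\cdots}_{\mathsf{m}(s,t)}$. The pair of coordinates $(p_s,p_t)$ evolves inside its own $2$-plane under $\sigma_s,\sigma_t$, and the defining condition $\kappa_{s,t}\kappa_{t,s}=4\cos^2\!\bigl(\pi/\mathsf{m}(s,t)\bigr)$ is exactly what makes $\sigma_s\sigma_t$ act there as a rotation of order $\mathsf{m}(s,t)$; the remaining coordinates $p_a$ merely accumulate linear combinations of the intermediate values of $p_s$ and $p_t$, which can be checked to coincide for the two words. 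Hence $s\mapsto\sigma_s$ extends to a representation of $\mathcal{W}$ on $\mathbb{R}^{\mathcal{S}}$ (firing the sequence $s_{i_1},\dots,s_{i_k}$ applies $\sigma_{s_{i_k}}\cdots\sigma_{s_{i_1}}$, so words are realized up to the harmless reversal $w\mapsto w^{-1}$). This representation is the contragredient of the geometric representation and is therefore faithful by Tits' theorem.

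With this in hand the first two bullets are formal. The ``if'' direction of the first bullet is immediate, since equal elements of $\mathcal{W}$ act by the same operator, so $p^{s_{i_1}\cdots s_{i_p}}$ depends only on the class of the word in $\mathcal{W}$. The converse, together with the bijectivity in the second bullet, reduces to injectivity of the orbit map, i.e. triviality of the stabilizer of $p$; for a positive starting position (in particular the unit position $\mathbf{1}$) the point lies in the interior of the fundamental chamber, where the stabilizer is trivial by Tits' description of the action on the Tits cone. Since $\mathfrak{P}_p$ is the orbit of $p$ by definition, surjectivity is automatic.

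The decisive and most delicate step is the third bullet, and I expect it to be the main obstacle. I would establish the dictionary: at each stage the coordinate that is about to be fired, $\bigl(p^{\,s_{i_1}\cdots s_{i_{j-1}}}\bigr)_{s_{i_j}}$, is positive if and only if $\ell(s_{i_1}\cdots s_{i_{j-1}}s_{i_j})>\ell(s_{i_1}\cdots s_{i_{j-1}})$. This rests on the standard root-theoretic fact that $\ell(ws)>\ell(w)$ holds precisely when $w(\alpha_s)$ is a positive root, transported through $\sigma$ to the sign of the fired coordinate, the positivity of the starting position guaranteeing that every coordinate is positive at $w=e$. Granting the dictionary, a game is positive exactly when every prefix strictly increases length, which is the definition of a reduced decomposition, and conversely. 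The genuine difficulty is isolated in this sign-versus-length correspondence: the earlier steps are linear algebra, whereas here the combinatorics of $\mathcal{W}$ (equivalently the Exchange/Deletion property) is exactly what must be invoked to relate the faithful geometric model to word length.
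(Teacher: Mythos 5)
The paper does not actually prove this statement: it is imported verbatim from \cite[Theorem 4.3.1]{BB} as background for the Appendix, so there is no internal proof to compare against. Your argument is, in outline, the standard proof given in \cite[Sections 4.2--4.3]{BB}: encode a firing as the linear operator $(\sigma_s p)_a = p_a + \kappa_{s,a}p_s$, check $\sigma_s^2=\mathrm{id}$ and the dihedral relations to obtain the (anti-)representation, invoke faithfulness and the positive-root/length dictionary, and read off the three bullets. Three points deserve flagging. First, the statement as transcribed in the paper omits the hypothesis that the starting position $p$ is \emph{positive}; without it the first two bullets fail (e.g.\ $p=0$ is fixed by everything), and your appeal to ``interior of the fundamental chamber'' is exactly where this hypothesis enters --- you are right to supply it, but you should say explicitly that you are adding a hypothesis the quoted statement lacks. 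Second, since the $\kappa_{s,t}$ need not be symmetric, the representation is not literally the contragredient of \emph{the} geometric representation but Eriksson's generalized geometric representation; faithfulness and the fact that $\ell(ws)>\ell(w)$ iff $w$ sends the $s$-th (generalized) simple root to a positive one are the generalized versions proved in \cite[4.2]{BB}, not off-the-shelf Tits. Third, the verification that the braid relation holds on the spectator coordinates $p_a$, $a\notin\{s,t\}$ --- i.e.\ that the two firing orders feed the same total multiples of the intermediate $p_s$- and $p_t$-values into each neighbour --- is the one genuinely nontrivial computation in the whole proof, and ``can be checked to coincide'' is precisely the step you would need to write out; it is true, but order of the product $\sigma_s\sigma_t$ on the $2$-plane alone does not imply it. With these caveats made explicit, the proposal is a correct reconstruction of the cited proof.
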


This Theorem implies the following algorithm for finding normal form $\mathrm{NF}(w)$ of and element $w$ of a Coxeter group $\mathcal{W}$.

\begin{enumerate}
    \item Take an expression $w = s_{i_1}s_{i_2}\cdots s_{i_k}$.
    \item Play from $\mathbf{1}$ according to the play sequence $s_{i_k}, \ldots, s_{i_2}, s_{i_1}$.
    \item Set $p:=p^{s_{i_k}\cdots s_{i_2}s_{i_1}}$.
    \item Play from $p$ to $\mathbf{1}$ by firing at each step the minimal negative node.
    \item Record the obtained play sequence $s_{j_1},s_{j_2},\ldots, s_{j_n}.$
    \item $\mathrm{NF}(w)=s_{j_1},s_{j_2},\ldots, s_{j_n}.$
\end{enumerate}

\begin{example}
  Let us consider the following Coxeter graph
  \begin{figure}[h!]
    \begin{tikzpicture}
      \draw[line width =1] (0,-2.3) -- (0,-3.7);
      \draw[line width =1] (0,-3.7) -- (1.5,-3);
      \draw[line width =1] (0,-2.3) -- (1.5, -3);
      \draw[line width =1] (1.5,-3) -- node[above]{$\infty$}(3,-3);
      \draw[line width=1] (0,-2.3) circle (3pt) node[above] {$a$};
      \draw[line width=1] (0,-3.7) circle (3pt) node[below] {$b$};
      \draw[line width=1] (1.5,-3) circle (3pt) node[below] {$c$};
      \draw[line width=1] (3,-3) circle (3pt) node[below] {$d$};
      \fill[white] (0,-2.3) circle(3pt);
      \fill[white] (0,-3.7) circle(3pt);
      \fill[white] (1.5,-3) circle(3pt);
      \fill[white] (3,-3) circle(3pt);
    \end{tikzpicture}
  \end{figure}

Put $a<b<c<d$. Let us find the normal form of the word $w = babcdb$. Let us find the starting position (see fig.\ref{SPE1}). We have $\m{p} =(-3,-6,-4,7)$. Now we have to play from this position to position $\mathbf{1}$ by firing at each step the minimal negative node (see fig.\ref{NFE1}). Thus we obtain $\mathrm{NF}(w) = abacbd$.
\end{example}

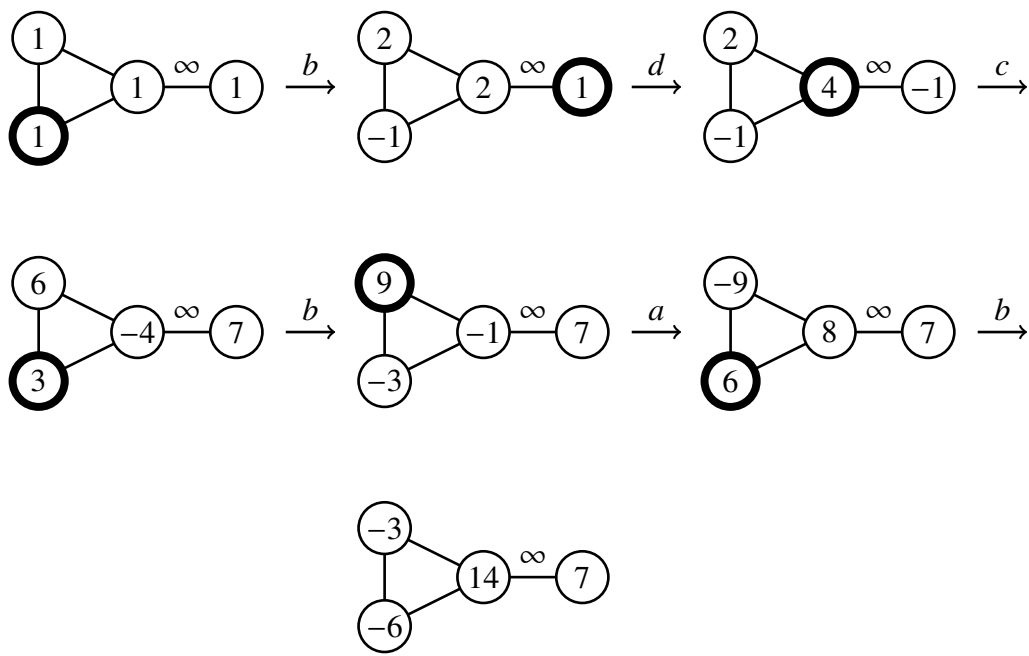
\begin{figure}[h!]
\begin{tikzpicture}[line width =1,scale =0.65]
  \node(a) at (0,0) {};
  \node(b) at (0,-2) {};
  \node(c) at (2,-1){};
  \node(d) at (4,-1) {};
  \draw (0,0)--(0,-2);
  \draw (0,0) --(2,-1);
  \draw (0,-2)--(2,-1);
  \draw (2,-1)--node[above]{$\infty$}(4,-1);
  \fill[white] (a) circle(15pt);
  \fill[white] (b) circle(15pt);
  \fill[white] (c) circle(15pt);
  \fill[white] (d) circle(15pt);
  \node(a) at (0,0) {$1$};
  \draw (a) circle(15pt);
  \node(b) at (0,-2) {$1$};
  \draw[line width = 3] (b) circle(15pt);
  \node(c) at (2,-1) {$1$};
  \draw (c) circle(15pt);
  \node(d) at (4,-1) {$1$};
  \draw (d) circle(15pt);
  \draw[->] (5,-1)--node[above]{$b$}(6,-1);
  \begin{scope}[xshift = 7cm]
  \node(a) at (0,0) {};
  \node(b) at (0,-2) {};
  \node(c) at (2,-1){};
  \node(d) at (4,-1) {};
  \draw (0,0)--(0,-2);
  \draw (0,0) --(2,-1);
  \draw (0,-2)--(2,-1);
  \draw (2,-1)--node[above]{$\infty$}(4,-1);
  \fill[white] (a) circle(15pt);
  \fill[white] (b) circle(15pt);
  \fill[white] (c) circle(15pt);
  \fill[white] (d) circle(15pt);
  \node(a) at (0,0) {$2$};
  \draw (a) circle(15pt);
  \node(b) at (0,-2) {$-1$};
  \draw (b) circle(15pt);
  \node(c) at (2,-1) {$2$};
  \draw (c) circle(15pt);
  \node(d) at (4,-1) {$1$};
  \draw[line width = 3] (d) circle(15pt);
  \draw[->] (5,-1)--node[above]{$d$}(6,-1);
   \end{scope}
\begin{scope}[xshift = 14cm]
  \node(a) at (0,0) {};
  \node(b) at (0,-2) {};
  \node(c) at (2,-1){};
  \node(d) at (4,-1) {};
  \draw (0,0)--(0,-2);
  \draw (0,0) --(2,-1);
  \draw (0,-2)--(2,-1);
  \draw (2,-1)--node[above]{$\infty$}(4,-1);
  \fill[white] (a) circle(15pt);
  \fill[white] (b) circle(15pt);
  \fill[white] (c) circle(15pt);
  \fill[white] (d) circle(15pt);
  \node(a) at (0,0) {$2$};
  \draw (a) circle(15pt);
  \node(b) at (0,-2) {$-1$};
  \draw (b) circle(15pt);
  \node(c) at (2,-1) {$4$};
  \draw[line width = 3] (c) circle(15pt);
  \node(d) at (4,-1) {$-1$};
  \draw (d) circle(15pt);
  \draw[->] (5,-1)--node[above]{$c$}(6,-1);
   \end{scope}
\begin{scope}[yshift = -5cm]
  \node(a) at (0,0) {};
  \node(b) at (0,-2) {};
  \node(c) at (2,-1){};
  \node(d) at (4,-1) {};
  \draw (0,0)--(0,-2);
  \draw (0,0) --(2,-1);
  \draw (0,-2)--(2,-1);
  \draw (2,-1)--node[above]{$\infty$}(4,-1);
  \fill[white] (a) circle(15pt);
  \fill[white] (b) circle(15pt);
  \fill[white] (c) circle(15pt);
  \fill[white] (d) circle(15pt);
  \node(a) at (0,0) {$6$};
  \draw (a) circle(15pt);
  \node(b) at (0,-2) {$3$};
  \draw (b)[line width = 3] circle(15pt);
  \node(c) at (2,-1) {$-4$};
  \draw (c) circle(15pt);
  \node(d) at (4,-1) {$7$};
  \draw (d) circle(15pt);
  \draw[->] (5,-1)--node[above]{$b$}(6,-1);
   \end{scope}
\begin{scope}[yshift = -5cm, xshift = 7cm]
  \node(a) at (0,0) {};
  \node(b) at (0,-2) {};
  \node(c) at (2,-1){};
  \node(d) at (4,-1) {};
  \draw (0,0)--(0,-2);
  \draw (0,0) --(2,-1);
  \draw (0,-2)--(2,-1);
  \draw (2,-1)--node[above]{$\infty$}(4,-1);
  \fill[white] (a) circle(15pt);
  \fill[white] (b) circle(15pt);
  \fill[white] (c) circle(15pt);
  \fill[white] (d) circle(15pt);
  \node(a) at (0,0) {$9$};
  \draw[line width = 3] (a) circle(15pt);
  \node(b) at (0,-2) {$-3$};
  \draw (b) circle(15pt);
  \node(c) at (2,-1) {$-1$};
  \draw (c) circle(15pt);
  \node(d) at (4,-1) {$7$};
  \draw (d) circle(15pt);
  \draw[->] (5,-1)--node[above]{$a$}(6,-1);
   \end{scope}
\begin{scope}[xshift = 14cm,yshift = -5cm]
  \node(a) at (0,0) {};
  \node(b) at (0,-2) {};
  \node(c) at (2,-1){};
  \node(d) at (4,-1) {};
  \draw (0,0)--(0,-2);
  \draw (0,0) --(2,-1);
  \draw (0,-2)--(2,-1);
  \draw (2,-1)--node[above]{$\infty$}(4,-1);
  \fill[white] (a) circle(15pt);
  \fill[white] (b) circle(15pt);
  \fill[white] (c) circle(15pt);
  \fill[white] (d) circle(15pt);
  \node(a) at (0,0) {$-9$};
  \draw (a) circle(15pt);
  \node(b) at (0,-2) {$6$};
  \draw[line width = 3] (b) circle(15pt);
  \node(c) at (2,-1) {$8$};
  \draw (c) circle(15pt);
  \node(d) at (4,-1) {$7$};
  \draw (d) circle(15pt);
  \draw[->] (5,-1)--node[above]{$b$}(6,-1);
   \end{scope}
\begin{scope}[yshift = -10cm, xshift = 7cm]
  \node(a) at (0,0) {};
  \node(b) at (0,-2) {};
  \node(c) at (2,-1){};
  \node(d) at (4,-1) {};
  \draw (0,0)--(0,-2);
  \draw (0,0) --(2,-1);
  \draw (0,-2)--(2,-1);
  \draw (2,-1)--node[above]{$\infty$}(4,-1);
  \fill[white] (a) circle(15pt);
  \fill[white] (b) circle(15pt);
  \fill[white] (c) circle(15pt);
  \fill[white] (d) circle(15pt);
  \node(a) at (0,0) {$-3$};
  \draw (a) circle(15pt);
  \node(b) at (0,-2) {$-6$};
  \draw (b) circle(15pt);
  \node(c) at (2,-1) {$14$};
  \draw (c) circle(15pt);
  \node(d) at (4,-1) {$7$};
  \draw (d) circle(15pt);
   \end{scope}
\end{tikzpicture}
\caption{The start position $\m{p}^{babcda}$ is found.}\label{SPE1}
\end{figure}

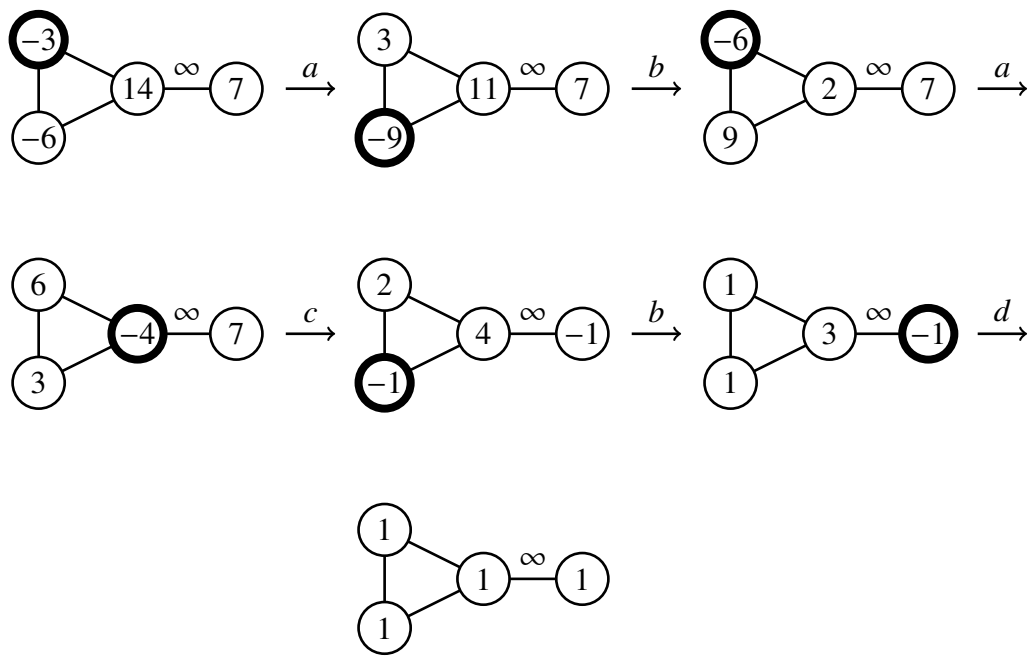
\begin{figure}[h!]
\begin{tikzpicture}[line width =1,scale =0.65,yshift = -1cm]
  \node(a) at (0,0) {};
  \node(b) at (0,-2) {};
  \node(c) at (2,-1){};
  \node(d) at (4,-1) {};
  \draw (0,0)--(0,-2);
  \draw (0,0) --(2,-1);
  \draw (0,-2)--(2,-1);
  \draw (2,-1)--node[above]{$\infty$}(4,-1);
  \fill[white] (a) circle(15pt);
  \fill[white] (b) circle(15pt);
  \fill[white] (c) circle(15pt);
  \fill[white] (d) circle(15pt);
  \node(a) at (0,0) {$-3$};
  \draw[line width = 3] (a) circle(15pt);
  \node(b) at (0,-2) {$-6$};
  \draw (b) circle(15pt);
  \node(c) at (2,-1) {$14$};
  \draw (c) circle(15pt);
  \node(d) at (4,-1) {$7$};
  \draw (d) circle(15pt);
  \draw[->] (5,-1)--node[above]{$a$}(6,-1);
  \begin{scope}[xshift = 7cm]
  \node(a) at (0,0) {};
  \node(b) at (0,-2) {};
  \node(c) at (2,-1){};
  \node(d) at (4,-1) {};
  \draw (0,0)--(0,-2);
  \draw (0,0) --(2,-1);
  \draw (0,-2)--(2,-1);
  \draw (2,-1)--node[above]{$\infty$}(4,-1);
  \fill[white] (a) circle(15pt);
  \fill[white] (b) circle(15pt);
  \fill[white] (c) circle(15pt);
  \fill[white] (d) circle(15pt);
  \node(a) at (0,0) {$3$};
  \draw (a) circle(15pt);
  \node(b) at (0,-2) {$-9$};
  \draw[line width = 3] (b) circle(15pt);
  \node(c) at (2,-1) {$11$};
  \draw (c) circle(15pt);
  \node(d) at (4,-1) {$7$};
  \draw (d) circle(15pt);
  \draw[->] (5,-1)--node[above]{$b$}(6,-1);
   \end{scope}
\begin{scope}[xshift = 14cm]
  \node(a) at (0,0) {};
  \node(b) at (0,-2) {};
  \node(c) at (2,-1){};
  \node(d) at (4,-1) {};
  \draw (0,0)--(0,-2);
  \draw (0,0) --(2,-1);
  \draw (0,-2)--(2,-1);
  \draw (2,-1)--node[above]{$\infty$}(4,-1);
  \fill[white] (a) circle(15pt);
  \fill[white] (b) circle(15pt);
  \fill[white] (c) circle(15pt);
  \fill[white] (d) circle(15pt);
  \node(a) at (0,0) {$-6$};
  \draw[line width = 3] (a) circle(15pt);
  \node(b) at (0,-2) {$9$};
  \draw (b) circle(15pt);
  \node(c) at (2,-1) {$2$};
  \draw (c) circle(15pt);
  \node(d) at (4,-1) {$7$};
  \draw (d) circle(15pt);
  \draw[->] (5,-1)--node[above]{$a$}(6,-1);
   \end{scope}
\begin{scope}[yshift = -5cm]
  \node(a) at (0,0) {};
  \node(b) at (0,-2) {};
  \node(c) at (2,-1){};
  \node(d) at (4,-1) {};
  \draw (0,0)--(0,-2);
  \draw (0,0) --(2,-1);
  \draw (0,-2)--(2,-1);
  \draw (2,-1)--node[above]{$\infty$}(4,-1);
  \fill[white] (a) circle(15pt);
  \fill[white] (b) circle(15pt);
  \fill[white] (c) circle(15pt);
  \fill[white] (d) circle(15pt);
  \node(a) at (0,0) {$6$};
  \draw (a) circle(15pt);
  \node(b) at (0,-2) {$3$};
  \draw (b) circle(15pt);
  \node(c) at (2,-1) {$-4$};
  \draw[line width = 3] (c) circle(15pt);
  \node(d) at (4,-1) {$7$};
  \draw (d) circle(15pt);
  \draw[->] (5,-1)--node[above]{$c$}(6,-1);
   \end{scope}
\begin{scope}[yshift = -5cm, xshift = 7cm]
  \node(a) at (0,0) {};
  \node(b) at (0,-2) {};
  \node(c) at (2,-1){};
  \node(d) at (4,-1) {};
  \draw (0,0)--(0,-2);
  \draw (0,0) --(2,-1);
  \draw (0,-2)--(2,-1);
  \draw (2,-1)--node[above]{$\infty$}(4,-1);
  \fill[white] (a) circle(15pt);
  \fill[white] (b) circle(15pt);
  \fill[white] (c) circle(15pt);
  \fill[white] (d) circle(15pt);
  \node(a) at (0,0) {$2$};
  \draw(a) circle(15pt);
  \node(b) at (0,-2) {$-1$};
  \draw[line width = 3] (b) circle(15pt);
  \node(c) at (2,-1) {$4$};
  \draw (c) circle(15pt);
  \node(d) at (4,-1) {$-1$};
  \draw (d) circle(15pt);
  \draw[->] (5,-1)--node[above]{$b$}(6,-1);
   \end{scope}
\begin{scope}[xshift = 14cm,yshift = -5cm]
  \node(a) at (0,0) {};
  \node(b) at (0,-2) {};
  \node(c) at (2,-1){};
  \node(d) at (4,-1) {};
  \draw (0,0)--(0,-2);
  \draw (0,0) --(2,-1);
  \draw (0,-2)--(2,-1);
  \draw (2,-1)--node[above]{$\infty$}(4,-1);
  \fill[white] (a) circle(15pt);
  \fill[white] (b) circle(15pt);
  \fill[white] (c) circle(15pt);
  \fill[white] (d) circle(15pt);
  \node(a) at (0,0) {$1$};
  \draw (a) circle(15pt);
  \node(b) at (0,-2) {$1$};
  \draw (b) circle(15pt);
  \node(c) at (2,-1) {$3$};
  \draw (c) circle(15pt);
  \node(d) at (4,-1) {$-1$};
  \draw[line width = 3] (d) circle(15pt);
  \draw[->] (5,-1)--node[above]{$d$}(6,-1);
   \end{scope}
\begin{scope}[yshift = -10cm,xshift = 7cm]
  \node(a) at (0,0) {};
  \node(b) at (0,-2) {};
  \node(c) at (2,-1){};
  \node(d) at (4,-1) {};
  \draw (0,0)--(0,-2);
  \draw (0,0) --(2,-1);
  \draw (0,-2)--(2,-1);
  \draw (2,-1)--node[above]{$\infty$}(4,-1);
  \fill[white] (a) circle(15pt);
  \fill[white] (b) circle(15pt);
  \fill[white] (c) circle(15pt);
  \fill[white] (d) circle(15pt);
  \node(a) at (0,0) {$1$};
  \draw (a) circle(15pt);
  \node(b) at (0,-2) {$1$};
  \draw (b) circle(15pt);
  \node(c) at (2,-1) {$1$};
  \draw (c) circle(15pt);
  \node(d) at (4,-1) {$1$};
  \draw (d) circle(15pt);
   \end{scope}
\end{tikzpicture}
\caption{We are firing at each step the minimal negative node.}\label{NFE1}
\end{figure}

\newpage

\end{document}